\newtheorem{thm}{\bf{Theorem}}[section]
\newtheorem{lem}[thm]{\bf{Lemma}}
\newtheorem{df}[thm]{\bf{Definition}}
\newtheorem{cor}[thm]{\bf{Corollary}}
\newtheorem{prop}[thm]{\bf{Proposition}}
\newtheorem{fact}[thm]{\bf{Fact}}
\newtheorem{ex}[thm]{\bf{Example}}
\newcommand{\To}{\ensuremath{\rightrightarrows}}
\newcommand{\menge}[2]{\big\{{#1} \mid {#2}\big\}}
\numberwithin{equation}{section}
\newcommand{\dom}{\operatorname{dom}}
\newcommand{\intt}{\operatorname{int}}
\newcommand{\ri}{\operatorname{ri}}
\newcommand{\epi}{\operatorname{epi}}
\newcommand{\ran}{\operatorname{ran}}
\newcommand{\Id}{\operatorname{Id}}
\newcommand{\elimsup}{\operatornamewithlimits{elimsup}}
\newcommand{\eliminf}{\operatornamewithlimits{eliminf}}
\newcommand{\argmin}{\operatornamewithlimits{argmin}}
\newcommand{\Prox}{\operatorname{Prox}}
\newcommand{\bdry}{\operatorname{bdry}}
\newcommand{\R}{\operatorname{\mathbb{R}}}
\newcommand{\RX}{\operatorname{\overline{\R}}}
\newcommand{\N}{\operatorname{\mathbb{N}}}
\newcommand{\B}{\operatorname{\mathbb{B}}}
\title{Strongly convex functions, Moreau envelopes and the generic nature of convex functions with strong minimizers\\}
\author{C. Planiden\thanks{Mathematics, University of British Columbia Okanagan, Kelowna, B.C. V1V 1V7, Canada. Research by this author was supported by UBC UGF and by NSERC of Canada. chayne.planiden@alumni.ubc.ca.}\and X. Wang\thanks{Mathematics, University of British Columbia Okanagan, Kelowna, B.C. V1V 1V7, Canada. Research by this author was partially supported by an NSERC Discovery Grant. shawn.wang@ubc.ca.}}
\date{\today}
\begin{document}

\maketitle\author
\setcounter{page}{1}\pagenumbering{arabic}

\begin{abstract}
In this work, using Moreau envelopes, we define a complete metric
for the set of proper lower semicontinuous convex functions. Under this metric, the convergence of each sequence of
convex functions is epi-convergence. We show that the set of strongly convex functions is dense but it is only of the first category. On the other hand, it is shown that the set of
convex functions with strong minima is of the second category.
\end{abstract}

\textbf{AMS Subject Classification:} Primary 54E52, 52A41, 90C25; Secondary 49K40.\\

\textbf{Keywords:} Baire category, convex function, epi-topology, generic set, meagre set, proximal mapping, strongly convex function, strong minimizer, complete metric space, Moreau envelope.

\section{Introduction}\label{sec:intro}
Minimizing convex functions is fundamental in optimization, both in theory and in the algorithm design.
For most applications, the assertions that can be made about a class of convex functions are of greater value than those concerning a particular problem. This theoretical analysis is valuable for the insights. Our main result in this paper states
that the set of all proper lower semicontinuous (lsc) convex functions which have strong minimizers is of second category.
Studying strong minima is important, because
numerical methods usually produce asymptotically minimizing sequences, we can
assert convergence of asymptotically
minimizing sequences when the function has a strong minimizer.
The strongly convex function is also of great use in optimization problems, as it can significantly
increase the rate of convergence of first-order methods such as projected subgradient descent \cite{simpler}, or
more generally the forward-backward algorithm \cite[Example 27.12]{convmono}.
Although every strongly convex function has a strong minimizer, we show that
the set of strongly convex functions is only of the first category.

As a proper lsc convex function allows infinity values, we propose to relate the
function to its Moreau envelope.
The importance of the Moreau envelope in optimization is clear; it is a regularizing (smoothing) function \cite{moreau1963,proximite}, and in the convex setting it has the same local minima and minimizers as its objective function \cite{funcanal,rockwets}.

The key tool we use is Baire category. A property is said to be generic if it holds for a second category set.
We will work in a metric space defined by Moreau envelopes. In this setting,
there are many nice properties of the set of Moreau envelopes of proper, lsc, convex functions. This set is proved to be closed and convex. Moreover, as a mapping from the set of proper lsc convex functions to the set of
Moreau envelopes of convex functions, the Moreau envelope mapping is bijective. We provide a detailed analysis of functions with strong minima, strongly convex functions, and their Moreau envelopes.

\par The organization of the present work is the following. Section \ref{sec:prelim} contains notation and definitions, as well as some preliminary facts and lemmas about Baire category, epi-convergence of convex functions, strongly convex functions and strong minimizers
that we need to prove the main results. We show that the Moreau envelope of a convex function inherits many nice properties of
 the convex function, such as coercivity and strong convexity.
In Section \ref{sec:metric}, using Moreau envelopes of convex functions, we propose to use Attouch-Wets' metric
on the set of proper lsc convex functions. It turns out that
this metric space is complete, and it is isometric to the metric space of Moreau envelopes endowed with uniform convergence
on bounded sets. The main results of this paper are presented in Section \ref{sec:main}. We give some characterizations of
strong minimizers of convex functions, that are essential for our Baire category approach.  We establish
 Baire category classification of the sets of strongly convex functions, convex functions with strong minima, and convex coercive functions.  Our main result says that most convex functions have strong
minima, which in turn implies that the set of convex functions not having strong minimizers is small.
Surprisingly, the set of strongly convex functions is only of the first category.
In addition, we show that a convex function is strongly convex if and only if its
 proximal mapping is a down-scaled proximal mapping.
 Concluding remarks and areas of future research are mentioned in Section \ref{sec:conc}.

A comparison to literature is in order. In \cite{mostmax}, Baire category theory was used to show that most (i.e. a generic set) maximally monotone operators have a unique zero. In \cite{pwang2016}, a similar track was taken, but it uses the perspective of proximal mappings in particular, ultimately proving that most classes of convex functions have a unique minimizer. The technique of
this paper differs in that it is based on functions. We use Moreau envelopes of convex functions, strong minimizers and strongly convex functions instead
 of subdifferentials.
 While Beer and Lucchetti obtained a similar result on generic well-posedness of convex optimization, their approach
 relies on epi-graphs of convex functions \cite{beerl1991, beer1992}.
 Our Moreau envelope approach is more accessible and natural
 to practical
 optimizers because taking the Moreau envelope is a popular regularization method used in the optimization community. We also
 give a systematic study of strongly convex functions, which is new to the best of our knowledge.
See also \cite{spingarn1979} for generic nature of constrained optimization problems,
and \cite{lucchetti2006} for well-posedness in optimization. For comprehensive generic results on fixed points
of firmly nonexpansive mappings and nonexpansive mappings, we refer the reader to \cite{reichzas2014}.

\section{Preliminaries}\label{sec:prelim}

\subsection{Notation}
All functions in this paper are defined on $\R^n,$ Euclidean space equipped with inner product $\langle x,y\rangle=\sum\limits_{i=1}^nx_iy_i,$ and induced norm $\|x\|=\sqrt{\langle x,x\rangle}.$ The extended real line $\R\cup\{\infty\}$ is denoted $\overline{\R}.$ We use $\dom f$ for the domain of $f,$ $\intt\dom f$ for the interior of the domain of $f,$ $\bdry\dom f$ for the boundary of the domain of $f,$ and $\epi f$ for the epigraph of $f.$
We use $\Gamma_0(X)$ to represent the set of proper lsc convex
functions on the space $X$ with the terms proper, lsc, and convex as defined in \cite{convmono, rockwets}. More precisely,
$f$ is proper if $-\infty\not\in f(X)$ and $\dom f\neq\varnothing$; $f$ is lsc at $x$ if
$x_{k}\rightarrow x$ implies $\liminf_{k\rightarrow}f(x_{k})
\geq f(x)$, when this is true at every $x\in X$ we call $f$ lsc on $X$; $f$ is convex if
$$(\forall x, y\in\dom f) (\forall 0\leq \alpha\leq 1)
\quad
f(\alpha x+(1-\alpha)y)\leq\alpha f(x)+(1-\alpha)f(y).$$
The symbol $G_\delta$ is used to indicate a generic set. The identity mapping or matrix is $\Id:\R^n\rightarrow\R^n: x\mapsto x.$
We use $\B_r(x)$ for the open ball centred at $x$ of radius $r,$ and $\B_r[x]$ for the closed ball.  For a set $C\subseteq\R^n$,
its closure is $\overline{C}$. The closed line segment between $x, y\in\R^n$  is $[x,y]:=
\{\lambda x+(1-\lambda)y: \ 0\leq \lambda \leq 1\}$.
We use $\overset{p}\rightarrow$ to indicate pointwise convergence, $\overset{e}\rightarrow$ for epi-convergence, and $\overset{u}\rightarrow$ for uniform convergence.
\subsection{Baire category}
Let $(X, d)$ be a metric space, where $X$ is a set and $d$ is a metric on $X$.
\begin{df}
A set $S\subseteq X$ is \emph{dense} in $X$ if every element of $X$ is either in $S,$ or a limit point of $S.$ A set is \emph{nowhere dense} in $X$ if the interior of its closure in $X$ is empty.
\end{df}
\begin{df}
A set $S\subseteq X$  is \emph{of first category (meagre)} if $S$ is a union of countably many nowhere dense sets.
A set $S\subseteq X$ is \emph{of second category (generic)} if $X\setminus S$ is of first category.
\end{df}

The following Baire category theorem is essential for this paper.
\begin{fact}
[Baire]\emph{(\cite[Theorem 1.47]{convanalgen} or \cite[Corollary 1.44]{convmono})}  Let $(X,d)$ be a complete metric space. Then any countable intersection of dense open subsets of $X$ is dense.
\end{fact}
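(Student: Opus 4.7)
The plan is to reformulate the conclusion: it suffices to show that for every nonempty open set $V \subseteq X$, the intersection $V \cap \bigcap_{n \in \N} U_n$ is nonempty, where $(U_n)_{n\in\N}$ is the given countable family of dense open sets. The strategy is the classical nested-ball argument that exploits completeness of $(X,d)$.

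First, I would use density of $U_1$: since $V \cap U_1$ is nonempty and open, I can select $x_1 \in V \cap U_1$ and a radius $r_1 \in (0,1)$ with $\B_{r_1}[x_1] \subseteq V \cap U_1$. I then proceed inductively: assuming $x_n$ and $r_n \in (0, 1/n)$ have been chosen with $\B_{r_n}[x_n] \subseteq V \cap U_1 \cap \cdots \cap U_n$, I invoke density of $U_{n+1}$ to see that $\B_{r_n}(x_n) \cap U_{n+1}$ is nonempty and open, pick $x_{n+1}$ in it, and choose $r_{n+1} \in (0, 1/(n+1))$ so that $\B_{r_{n+1}}[x_{n+1}] \subseteq \B_{r_n}(x_n) \cap U_{n+1}$.

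Next, I would verify that $(x_n)$ is Cauchy: for $m,k \geq N$ the nesting gives $x_m, x_k \in \B_{r_N}[x_N]$, hence $d(x_m, x_k) \leq 2r_N < 2/N$. By completeness of $X$ the sequence converges to some $x \in X$. Because $\B_{r_N}[x_N]$ is closed and contains the tail $(x_m)_{m \geq N}$, the limit satisfies $x \in \B_{r_N}[x_N] \subseteq V \cap U_N$ for every $N$, and therefore $x \in V \cap \bigcap_{n \in \N} U_n$, as required.

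The only delicate point is the inductive construction, which rests on two features of metric spaces: every open set containing a point also contains a closed ball around that point, and the radii can be forced below $1/n$ so that the centers form a Cauchy sequence. With those two ingredients in hand, completeness delivers a limit that simultaneously lies in $V$ and in every $U_n$, which is precisely what density of the countable intersection demands.
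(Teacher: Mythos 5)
The paper does not prove this statement; it is quoted as a known fact with citations to \cite{convanalgen} and \cite{convmono}, so there is no in-paper argument to compare against. Your proof is the standard nested-closed-ball argument for the Baire category theorem and it is correct: the reduction to showing $V\cap\bigcap_{n}U_n\neq\varnothing$ for every nonempty open $V$, the inductive choice of $\B_{r_{n+1}}[x_{n+1}]\subseteq\B_{r_n}(x_n)\cap U_{n+1}$ with $r_n<1/n$, the Cauchy estimate $d(x_m,x_k)\leq 2r_N$, and the passage to the limit using closedness of the balls are all carried out properly, so nothing further is needed.
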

\begin{fact}\label{separable}
Finite-dimensional space $\R^n$ is separable. That is, $\R^n$ has a countable subset that is dense in $\R^n.$
\end{fact}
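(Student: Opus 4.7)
The plan is to exhibit an explicit countable dense subset of $\R^n$, namely $\Q^n$, the set of $n$-tuples with rational coordinates. This is the classical realization of separability in Euclidean space, and the argument splits cleanly into a counting step and a density step.

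First I would establish countability. The set $\Q$ is countable via the standard enumeration of fractions $p/q$ with $p\in\Z$ and $q\in\N$, and a finite Cartesian product of countable sets is countable by induction on $n$, using any pairing bijection $\N\times\N\to\N$ (for instance $(a,b)\mapsto 2^{a}(2b+1)-1$). Hence $\Q^n$ is countable.

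Next I would verify density. Given $x=(x_1,\ldots,x_n)\in\R^n$ and $\varepsilon>0$, the density of $\Q$ in $\R$ (itself a consequence of the Archimedean property of the reals) lets me choose $q_i\in\Q$ with $|x_i-q_i|<\varepsilon/\sqrt{n}$ for each $i=1,\ldots,n$. Setting $q:=(q_1,\ldots,q_n)\in\Q^n$, one then has
$$\|x-q\|=\sqrt{\sum_{i=1}^n(x_i-q_i)^2}<\sqrt{n\cdot\varepsilon^2/n}=\varepsilon,$$
so every open ball $\B_\varepsilon(x)$ in $\R^n$ intersects $\Q^n$. This shows that each $x\in\R^n$ lies in $\Q^n$ or is a limit point of it, which is precisely density.

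There is no substantive obstacle; this is a textbook fact, and its inclusion in the preliminaries is simply to invoke it later when applying the Baire category theorem in a separable setting. The only minor point requiring care is the choice of coordinate-wise tolerance $\varepsilon/\sqrt{n}$, dictated by the dimension in order that the Euclidean norm estimate come out strictly below $\varepsilon$.
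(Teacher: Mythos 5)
Your proposal is correct and uses the same witness as the paper, namely $\Q^n$, the set of $n$-tuples with rational components; the paper simply cites \cite[Example 1.3-7]{kreyszigfuncanal} for this, whereas you spell out the countability and density arguments explicitly. No discrepancy to report.
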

\begin{proof}
This result is an extension of \cite[Example 1.3-7]{kreyszigfuncanal}, using the fact that the set of all $n$-tuples with rational components is a countable, dense subset of $\R^n.$
\end{proof}

\subsection{Convex analysis}

In this section we state several key facts about convex functions
that we need in order to prove the main results in subsequent sections.

\subsubsection{Subdifferentials of convex functions}
Let $f\in\Gamma_{0}(\R^n)$.
The set-valued mapping
   $$\partial f\colon \R^n\To \R^n\colon
   x\mapsto \menge{x^*\in \R^n}{(\forall y\in
\R^n)\; \langle y-x, x^*\rangle + f(x)\leq f(y)}$$ is the
{subdifferential
operator} of $f$.

 \begin{fact}\emph{\cite[Theorem 20.40]{convmono}}\label{subdiffmaxmono}
If $f\in\Gamma_0(\R^n),$ then $\partial f$ is maximally monotone.
\end{fact}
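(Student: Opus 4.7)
The plan is to split the claim into the two parts of the definition: monotonicity and maximality. Monotonicity is immediate from the defining subgradient inequality. For any $x^*\in\partial f(x)$ and $y^*\in\partial f(y)$, the two inequalities $\langle y-x,x^*\rangle+f(x)\leq f(y)$ and $\langle x-y,y^*\rangle+f(y)\leq f(x)$ can be added to cancel the function values, yielding $\langle x-y,x^*-y^*\rangle\geq 0$. So the real content is maximality.

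For maximality I would use Minty's surjectivity criterion: a monotone operator $T$ on $\R^n$ is maximally monotone if and only if $\ran(T+\Id)=\R^n$. This reduces the problem to showing that for every $v\in\R^n$, there exists $x\in\R^n$ with $v\in x+\partial f(x)$, equivalently $v-x\in\partial f(x)$. I would obtain such an $x$ as the (necessarily unique) minimizer of
\begin{equation*}
g(y):=f(y)+\tfrac{1}{2}\|y-v\|^2.
\end{equation*}
Since $f\in\Gamma_0(\R^n)$, $f$ admits an affine minorant $y\mapsto \langle a,y\rangle+b$, so $g(y)\geq \langle a,y\rangle+b+\tfrac{1}{2}\|y-v\|^2$, which is coercive. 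Coercivity together with lower semicontinuity and properness of $g$ guarantees existence of a minimizer $x$, and strong convexity gives uniqueness. The Fermat rule $0\in\partial g(x)$, combined with the sum rule $\partial g(x)=\partial f(x)+(x-v)$ (the quadratic term has full domain, so the constraint qualification is automatic), yields $v-x\in\partial f(x)$, i.e. $v\in(\Id+\partial f)(x)$.

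From surjectivity of $\Id+\partial f$ one concludes maximality as follows: suppose $(x_0,x_0^*)\in \R^n\times\R^n$ is monotonically related to every pair in $\gph\partial f$. Choose $x\in\R^n$ so that $x_0+x_0^*\in x+\partial f(x)$, let $y^*:=x_0+x_0^*-x\in\partial f(x)$, and apply monotonicity of the candidate extension with $(x,y^*)$ to obtain $\langle x_0-x,x_0^*-y^*\rangle\geq 0$. Substituting $y^*=x_0+x_0^*-x$ gives $-\|x_0-x\|^2\geq 0$, hence $x=x_0$ and therefore $x_0^*=y^*\in\partial f(x_0)$. So no proper enlargement of $\gph\partial f$ can stay monotone.

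The main obstacle is the surjectivity step, and within it the simultaneous use of two nontrivial ingredients: (1) the existence of an affine minorant for a proper lsc convex function, which in turn relies on the Hahn-Banach separation theorem applied to $\epi f$, and (2) the subdifferential sum rule. Both are standard in convex analysis but are the real work behind what is often quoted as a one-line ``Rockafellar's theorem.'' Finite dimensionality is used only to get existence of the minimizer from coercivity via Weierstrass; everything else would go through in any Hilbert space.
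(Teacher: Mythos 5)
The paper does not prove this statement at all: it is imported verbatim as Fact~2.4 with a citation to \cite[Theorem 20.40]{convmono}, so there is no in-paper argument to compare against. Your proof is correct and is essentially the standard proof of Rockafellar's theorem in the finite-dimensional (or Hilbert-space) setting: monotonicity from adding the two subgradient inequalities, and maximality by showing $\ran(\Id+\partial f)=\R^n$ via minimization of the strongly convex, coercive function $y\mapsto f(y)+\tfrac12\|y-v\|^2$, followed by the elementary implication ``surjective plus monotone implies maximal.'' All the ingredients you invoke are sound where you invoke them: the affine minorant exists for every proper lsc convex function (separation applied to $\epi f$), the sum rule applies because the quadratic perturbation is finite and continuous everywhere, and the final substitution $x_0^*-y^*=x-x_0$ giving $-\|x_0-x\|^2\ge 0$ is exactly right. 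One small point of bookkeeping: you announce Minty's criterion as an equivalence, but you only need and only use the easy direction (surjectivity of $\Id+T$ implies maximality), which you then prove from scratch in your last paragraph; so your argument is in fact self-contained and does not rely on the harder half of Minty's theorem. Your closing remark about what survives in infinite dimensions is also accurate: in a Hilbert space one replaces Weierstrass with weak lower semicontinuity and weak compactness of bounded closed convex sets to get the minimizer, and the rest is unchanged.
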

\begin{fact}\emph{(\cite[Theorem 12.41]{rockwets}, \cite[Theorem 2.51]{attouchwet86})}\label{maxmonoalmostconvex}
For any maximally monotone mapping $T:\R^n\rightrightarrows\R^n,$ the set $\dom T$ is almost convex. That is, there exists a convex set $C\subseteq \R^n$ such that $C\subseteq\dom T\subseteq\overline{C}.$ The same applies to the set $\ran T.$
\end{fact}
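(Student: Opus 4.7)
The plan is to establish the statement for $\dom T$ first and then deduce the analogous statement for $\ran T$ by observing that $T^{-1}\colon\R^n\rightrightarrows\R^n$ is itself maximally monotone and $\ran T=\dom T^{-1}$. For the domain, the natural candidate for the convex set $C$ is the relative interior $\ri(\overline{\dom T})$, so the task reduces to two structural claims: $(a)$ $\overline{\dom T}$ is convex, and $(b)$ $\ri(\overline{\dom T})\subseteq \dom T$. Given these, the sandwich $C\subseteq\dom T\subseteq\overline{C}$ holds because $\overline{\ri(\overline{\dom T})}=\overline{\dom T}$.

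For claim $(a)$, my first approach would go through Minty's theorem: for every $\lambda>0$ the resolvent $J_{\lambda T}:=(\Id+\lambda T)^{-1}$ is a single-valued, firmly nonexpansive mapping defined on all of $\R^n$, with $\ran J_{\lambda T}=\dom T$. Using the Yosida approximation $T_\lambda = \lambda^{-1}(\Id - J_{\lambda T})$, one has $x+\lambda T_\lambda x\in\dom T$ for every $x\in\R^n$, and as $\lambda\downarrow 0$ these points approximate arbitrary points of $\overline{\dom T}$. A convex-combination argument on two points of $\dom T$, together with the nonexpansivity of $J_{\lambda T}$, then shows that the line segment between any two points of $\dom T$ stays within $\overline{\dom T}$, giving convexity of the closure. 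An alternative, more synthetic route uses the Fitzpatrick function $\varphi_T$, which is convex and lsc, and whose projection onto the first coordinate has closure equal to $\overline{\dom T}$; convexity is then immediate.

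For claim $(b)$, I would use local boundedness: a maximally monotone operator is locally bounded at every point of $\ri(\overline{\dom T})$. Given $x_0\in\ri(\overline{\dom T})$, pick a sequence $x_k\to x_0$ with $x_k\in\dom T$ and $x_k^*\in T x_k$. Local boundedness yields a bounded subsequence of $(x_k^*)$; passing to a convergent subsequence $x_k^*\to x_0^*$ and invoking the fact that $\gph T$ is closed (a consequence of maximality), we conclude $(x_0,x_0^*)\in\gph T$, hence $x_0\in\dom T$.

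The main obstacle is claim $(a)$: proving convexity of $\overline{\dom T}$ from the bare monotonicity plus maximality axioms is the nontrivial structural input, and every clean proof I know routes through either Minty's surjectivity theorem or the Fitzpatrick function, both of which require some setup. Once $(a)$ is in hand, $(b)$ is essentially a local-boundedness plus closed-graph argument, and the $\ran T$ statement is free from the $T\leftrightarrow T^{-1}$ symmetry.
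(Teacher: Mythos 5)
The paper offers no proof of this statement: it is recorded as a Fact and simply attributed to \cite[Theorem 12.41]{rockwets} and \cite[Theorem 2.51]{attouchwet86}, so there is no in-paper argument to compare yours against. Your architecture is the classical one from those sources: convexity of $\overline{\dom T}$ via Minty resolvents (or the Fitzpatrick function), the candidate $C=\ri(\overline{\dom T})$, the identity $\overline{\ri(\overline{\dom T})}=\overline{\dom T}$, and the reduction of $\ran T$ to $\dom T^{-1}$. Claims $(a)$ and the $T\leftrightarrow T^{-1}$ symmetry are sound as sketched.

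There is, however, a genuine gap in claim $(b)$. A maximally monotone operator is locally bounded at $\bar x$ precisely when $\bar x\notin\bdry(\overline{\dom T})$; it is \emph{not} locally bounded at every point of $\ri(\overline{\dom T})$. When $\dom T$ is not full-dimensional, every point of $\dom T$ lies on $\bdry(\overline{\dom T})$ and local boundedness fails throughout the domain: for $T=\partial\iota_{\{0\}}$ on $\R^n$ one has $\ri(\overline{\dom T})=\{0\}$ while $T(0)=\R^n$. (The inclusion $\ri(\overline{\dom T})\subseteq\dom T$ is still true there; it is your proof of it that breaks, since no bounded subsequence of $(x_k^*)$ can be extracted.) The standard repair is to work modulo the affine hull: let $L$ be the subspace parallel to $\operatorname{aff}(\dom T)$ and let $P_L$ be the orthogonal projection onto $L$. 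The monotonicity estimate underlying local boundedness does bound the tangential components $P_Lx_k^*$ near a relative-interior point, and since $x_k-y\in L$ for every $y\in\dom T$ one has $\langle x_k-y,\,P_Lx_k^*-y^*\rangle=\langle x_k-y,\,x_k^*-y^*\rangle\geq 0$ for all $(y,y^*)\in\gph T$; passing to a subsequence with $P_Lx_k^*\rightarrow v$ gives $\langle x_0-y,\,v-y^*\rangle\geq0$, and maximality yields $(x_0,v)\in\gph T$, hence $x_0\in\dom T$. Note that this forces you to invoke maximality directly rather than mere closedness of the graph, because the pair $(x_0,v)$ you produce is not a limit of points of $\gph T$. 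In the case $\intt(\dom T)\neq\varnothing$ your argument is fine as written.
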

\begin{fact}\emph{\cite[Corollary 23.5.1]{convanalrock}}\label{inverse}
If $f\in\Gamma_0(\R^n),$ then $\partial f^*$ is the inverse of $\partial f$ in the sense of multivalued mappings, i.e. $x\in\partial f^*(x^*)$ if and only if $x^*\in\partial f(x).$
\end{fact}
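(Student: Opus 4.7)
The plan is to derive the equivalence from the Fenchel--Young inequality together with biconjugation. Recall that for $f \in \Gamma_0(\R^n)$ the conjugate $f^*$ is defined by $f^*(x^*) = \sup_{x \in \R^n} \{\langle x, x^*\rangle - f(x)\}$, which immediately yields the Fenchel--Young inequality
\[
f(x) + f^*(x^*) \geq \langle x, x^*\rangle \quad \text{for all } x, x^* \in \R^n.
\]
My first step would be to observe that $x^* \in \partial f(x)$ is equivalent to equality holding in this inequality. Indeed, $x^* \in \partial f(x)$ means $f(y) \geq f(x) + \langle y - x, x^*\rangle$ for all $y$, which rearranges to $\langle x, x^*\rangle - f(x) \geq \langle y, x^*\rangle - f(y)$ for all $y$; taking the supremum on the right shows $\langle x, x^*\rangle - f(x) \geq f^*(x^*)$, and combining with Fenchel--Young gives the equality. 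The converse is a direct unpacking of the definitions.

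Next, I would apply this same characterization to the function $f^*$, which belongs to $\Gamma_0(\R^n)$ whenever $f$ does. This gives: $x \in \partial f^*(x^*)$ if and only if
\[
f^*(x^*) + f^{**}(x) = \langle x, x^*\rangle.
\]
The key step is then to invoke the Fenchel--Moreau biconjugation theorem, which states that $f^{**} = f$ whenever $f \in \Gamma_0(\R^n)$. Substituting $f$ for $f^{**}$ shows that $x \in \partial f^*(x^*)$ is equivalent to $f(x) + f^*(x^*) = \langle x, x^*\rangle$, which by the first paragraph is equivalent to $x^* \in \partial f(x)$.

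The only nontrivial ingredient here is the biconjugation identity $f^{**} = f$, which relies on separating hyperplane / Hahn--Banach style arguments applied to $\epi f$; I would treat this as a black box since it is standard in convex analysis. Everything else is a direct manipulation of the Fenchel--Young inequality. There is no real obstacle beyond keeping the roles of $x$ and $x^*$ straight and noting that the symmetric characterization of subgradients via Fenchel--Young equality is the pivotal fact that makes the inversion transparent.
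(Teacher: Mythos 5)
Your argument is correct and is essentially the standard proof given in the cited source (Rockafellar's Theorem 23.5 and Corollary 23.5.1): characterize $x^*\in\partial f(x)$ by equality in the Fenchel--Young inequality, apply the same characterization to $f^*$, and invoke $f^{**}=f$ from the Fenchel--Moreau theorem. The paper itself states this as a cited fact without proof, so there is nothing further to compare.
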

\subsubsection{Convex functions and their Moreau envelopes}
\begin{df}
The \emph{Moreau envelope} of a proper, lsc function $f:\R^n\rightarrow\overline{\R}$ is defined as
$$e_\lambda f(x):=\inf\limits_y\left\{f(y)+\frac{1}{2\lambda}\|y-x\|^2\right\}.$$
The associated \emph{proximal mapping} is the (possibly empty) set of points at which this infimum is achieved, and is denoted $\Prox_f^\lambda:$
$$\Prox_f^\lambda(x):=\argmin\limits_y\left\{f(y)+\frac{1}{2\lambda}\|y-x\|^2\right\}.$$
\end{df}

In this paper, without loss of generality we use $\lambda=1.$ The theory developed here is equally applicable with any other choice of $\lambda>0.$

\begin{fact}\emph{(\cite[Proposition 12.29]{convmono} or \cite[Theorem 2.26]{rockwets})} Let $f\in\Gamma_{0}(\R^n)$. Then
$e_{1}f:\R^n\rightarrow\R$ is continuously differentiable on $\R^n$, and its gradient
$$\nabla e_{1}f=\Id-\Prox_{f}^{1}$$
is $1$-Lipschitz continuous, i.e., nonexpansive.
\end{fact}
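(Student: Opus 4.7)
The plan is to prove three things in sequence: (i) $\Prox_f^1(x)$ is a singleton for every $x$, so $\Prox_f^1$ is an honest single-valued mapping $\R^n\to\R^n$; (ii) $e_1f$ is differentiable on $\R^n$ with $\nabla e_1 f(x) = x - \Prox_f^1(x)$; (iii) this gradient is $1$-Lipschitz. Throughout, the bridge between the Moreau envelope and the subdifferential is Fermat's rule applied to the auxiliary minimization defining $\Prox_f^1$.

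First I would fix $x\in\R^n$ and observe that $y\mapsto f(y) + \tfrac{1}{2}\|y - x\|^2$ lies in $\Gamma_0(\R^n)$, is strongly convex with modulus $1$ because of the quadratic term, and is coercive, so it attains a unique minimizer $p$. Thus $\Prox_f^1$ is well-defined and single-valued, and $e_1 f(x) = f(p) + \tfrac{1}{2}\|p - x\|^2<\infty$. Applying Fermat's rule gives $0 \in \partial f(p) + (p - x)$, i.e.\ $x - p \in \partial f(p)$, equivalently $x \in (\Id + \partial f)(p)$, so $\Prox_f^1 = (\Id + \partial f)^{-1}$. Since $\partial f$ is maximally monotone by Fact~2.4, Minty's theorem implies $(\Id + \partial f)^{-1}$ has full domain $\R^n$ and is firmly nonexpansive. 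An equivalent reformulation of firm nonexpansiveness of $T$ is firm nonexpansiveness of $\Id - T$, so $\Id - \Prox_f^1$ is firmly nonexpansive, in particular nonexpansive.

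Second, to identify the gradient I would note that $e_1 f = f \,\square\, \tfrac{1}{2}\|\cdot\|^2$ is convex on $\R^n$, and by the subdifferential calculus for exact infimal convolutions,
\[
\partial e_1 f(x) \;=\; \partial f(p) \,\cap\, \partial\!\left(\tfrac{1}{2}\|\cdot\|^2\right)\!(x - p) \;=\; \partial f(p) \,\cap\, \{x - p\}.
\]
Since $x - p \in \partial f(p)$ by the first step, the intersection is the singleton $\{x - p\}$. A proper lsc convex function is differentiable at a point of $\intt\dom$ iff its subdifferential there is a singleton, so $e_1 f$ is differentiable at every $x\in\R^n$ with $\nabla e_1 f(x) = x - \Prox_f^1(x)$. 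If one prefers to bypass the convolution calculus, a direct verification also works: using $p$ as a suboptimal competitor at $y$ yields $e_1 f(y) - e_1 f(x) \le \langle x - p,\,y - x\rangle + \tfrac{1}{2}\|y - x\|^2$, and applying the subgradient inequality $f(q) \ge f(p) + \langle x - p,\,q - p\rangle$ with $q = \Prox_f^1(y)$ supplies the matching lower bound, producing a Fr\'echet derivative equal to $x - p$.

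Third, continuity of the gradient is automatic, since $\nabla e_1 f = \Id - \Prox_f^1$ is nonexpansive by the first step. The main obstacle, in my view, is step two: the infimal convolution formula requires the infimum to be attained (which it is, by step one), and the direct route requires some care in matching the one-sided upper and lower bounds to extract true Fr\'echet differentiability rather than merely a one-sided estimate. Once the gradient formula is in hand, the $1$-Lipschitz property and hence the continuous differentiability are inherited from the resolvent theory already invoked.
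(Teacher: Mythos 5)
The paper does not prove this statement; it is quoted as a known fact with citations to Bauschke--Combettes and Rockafellar--Wets, so there is no internal proof to compare against. Your argument is a correct and complete reconstruction of the standard textbook proof those references give: existence and uniqueness of $\Prox_f^1(x)$ via strong convexity, the identification $\Prox_f^1=(\Id+\partial f)^{-1}$ via Fermat's rule, firm nonexpansiveness of the resolvent (hence of $\Id-\Prox_f^1$) via Minty's theorem, and the gradient formula either from the exact infimal-convolution subdifferential rule or from the two-sided estimate $\langle x-p,y-x\rangle\leq e_1f(y)-e_1f(x)\leq\langle x-p,y-x\rangle+\tfrac{1}{2}\|y-x\|^2$, which indeed forces $\partial e_1f(x)=\{x-\Prox_f^1(x)\}$ and Fr\'echet differentiability.
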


One important concept for studying the convergence of extended-valued functions is epi-convergence, see, e.g., \cite{rockwets}.
\begin{df}
The \emph{lower epi-limit} of a sequence $\{f^\nu\}_{\nu\in\N}\subseteq\R^n$ is the function having as its epigraph the outer limit of the sequence of sets $\epi f^\nu:$
$$\epi(\eliminf_\nu f^\nu):=\limsup_\nu(\epi f^\nu).$$
Similarly, the \emph{upper epi-limit} of $\{f^\nu\}_{\nu\in\N}$ is the function having as its epigraph the inner limit of the sets $\epi f^\nu:$
$$\epi(\elimsup_\nu f^\nu):=\liminf_\nu(\epi f^\nu).$$
When these two functions coincide, the \emph{epi-limit} is said to exist and the functions are said to \emph{epi-converge} to $f:$
$$f^\nu\overset{e}\rightarrow f~~\mbox{if and only if }~~\epi f^\nu\rightarrow\epi f.$$
\end{df}

We refer the reader to \cite{rockwets, beer1992, vanderwerff} for further details on epi-convergence, e.g., continuity, stability and applications
in optimization.
The analysis of the limit properties of sequences of convex functions via their
Moreau envelopes is highlighted by the following fact.

\begin{fact}\emph{(\cite[Theorem 7.37]{rockwets}, \cite{attouch1984})}\label{fact:epi}
Let $\{f^\nu\}_{\nu\in\N}\subseteq\Gamma_0(\R^n),$ $f\in\Gamma_{0}(\R^n).$ Then
$$f^\nu\overset{e}\rightarrow f~~\mbox{ if and only if }~~e_1f^\nu\overset{p}\rightarrow e_1f.$$
Moreover, the pointwise convergence of $e_1f^\nu$ to $e_1f$ is uniform on all bounded subsets of $\R^n,$ hence yields epi-convergence to $e_1f$ as well.
\end{fact}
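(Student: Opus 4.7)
The plan is to prove each implication separately, then derive the uniform-convergence clause from the equi-Lipschitz gradients of the envelopes.

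For the forward direction, assume $f^\nu \overset{e}\rightarrow f$ and fix $x \in \R^n$. To bound $\limsup_\nu e_1 f^\nu(x)$ above, take $\bar y$ to be the unique element of $\Prox_f^1(x)$, which exists because $y \mapsto f(y) + \tfrac12\|y-x\|^2$ is proper, lsc, and strongly convex, hence coercive. By the recovery-sequence characterization of epi-convergence there is $y^\nu \to \bar y$ with $\limsup_\nu f^\nu(y^\nu) \leq f(\bar y)$, so $\limsup_\nu e_1 f^\nu(x) \leq f(\bar y) + \tfrac12\|\bar y-x\|^2 = e_1 f(x)$. For the matching liminf bound, pick $y^\nu \in \Prox_{f^\nu}^1(x)$ for each $\nu$. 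The key technical step is to show $\{y^\nu\}$ is bounded: combining the just-established upper bound on $e_1 f^\nu(x)$ with a uniform affine minorant for the family $\{f^\nu\}$ (obtained from a subgradient of the epi-limit $f$) keeps $\tfrac12\|y^\nu - x\|^2$ controlled. Passing to any cluster point $\tilde y$, the lower epi-limit inequality gives $\liminf_\nu f^\nu(y^\nu) \geq f(\tilde y)$, whence $\liminf_\nu e_1 f^\nu(x) \geq f(\tilde y) + \tfrac12\|\tilde y - x\|^2 \geq e_1 f(x)$.

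For the reverse direction, suppose $e_1 f^\nu \overset{p}\rightarrow e_1 f$. Since each $e_1 f^\nu$ and $e_1 f$ is convex and finite-valued on $\R^n$, pointwise convergence upgrades automatically to uniform convergence on compacta. I would then exploit the infimal-convolution formula $e_1 f(y) = \inf_z\{f(z) + \tfrac12\|z-y\|^2\}$, whose Fenchel conjugate is $(e_1 f)^* = f^* + \tfrac12\|\cdot\|^2$. Uniform convergence on compacta of finite convex functions yields epi-convergence of their conjugates, so $(e_1 f^\nu)^* \overset{e}\rightarrow (e_1 f)^*$; subtracting the continuous quadratic $\tfrac12\|\cdot\|^2$ preserves epi-convergence, producing $f^{\nu\ast} \overset{e}\rightarrow f^*$. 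A second application of conjugate continuity for epi-convergence, together with biconjugation $f = f^{**}$ valid on $\Gamma_0(\R^n)$, recovers $f^\nu \overset{e}\rightarrow f$.

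The uniform-convergence clause then follows from the $1$-Lipschitz gradient of each envelope: the family $\{e_1 f^\nu\}$ is uniformly Lipschitz on every bounded set, so Arzel\`a--Ascoli combined with pointwise convergence forces uniform convergence on compacta, and uniform convergence of continuous functions to a continuous limit implies the claimed epi-convergence of $e_1 f^\nu$ to $e_1 f$. The main obstacle I anticipate is establishing boundedness of the proximal points $\{y^\nu\}$ in the forward direction: extracting a usable uniform affine minorant from the epi-convergent family $\{f^\nu\}$ requires selecting a subgradient of $f$ at a point of $\intt\dom f$ and propagating that affine bound uniformly to all large $\nu$ via epi-convergence, which is the most delicate ingredient of the argument.
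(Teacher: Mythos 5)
The paper does not prove this statement: it is quoted as a known result with citations to \cite[Theorem 7.37]{rockwets} and \cite{attouch1984}, so there is no in-paper argument to compare against. Judged on its own terms, your outline is essentially the standard proof and most of it is sound: the recovery-sequence upper bound, the reverse direction via $(e_1f)^*=f^*+\tfrac12\|\cdot\|^2$ together with Wijsman's theorem (epi-convergence of convex functions is equivalent to epi-convergence of their conjugates) and the stability of epi-convergence under adding or subtracting a fixed finite continuous function, and the upgrade from pointwise to locally uniform convergence for finite convex functions are all correct.

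The one genuine gap is exactly the step you flag, and your proposed resolution does not work as stated: an affine minorant of the epi-limit $f$, or a subgradient inequality for $f$ at a point of $\intt\dom f$, does not ``propagate'' to a global minorant of $f^\nu$ for large $\nu$, because the lower epi-limit inequality only controls $f^\nu$ along convergent sequences and hence only yields lower bounds that are uniform on bounded sets, not on all of $\R^n$. Two standard repairs: (i) apply Wijsman to the conjugates --- pick $a\in\dom f^*$, use the recovery-sequence property for $f^{\nu*}\overset{e}\rightarrow f^*$ to get $a^\nu\rightarrow a$ with $\limsup_\nu f^{\nu*}(a^\nu)\leq f^*(a)$, so that $f^\nu(y)\geq\langle a^\nu,y\rangle-f^*(a)-1\geq -(\|a\|+1)\|y\|-f^*(a)-1$ for all $y$ and all large $\nu$; this linear-growth minorant, combined with your upper bound $e_1f^\nu(x)\leq e_1f(x)+1$, bounds $\tfrac12\|y^\nu-x\|^2$ and hence the proximal points; or (ii) observe that the forward direction can be run through the same conjugate calculus as your reverse direction ($f^\nu\overset{e}\rightarrow f$ gives $f^{\nu*}+\tfrac12\|\cdot\|^2\overset{e}\rightarrow f^*+\tfrac12\|\cdot\|^2$, hence $e_1f^\nu\overset{e}\rightarrow e_1f$, hence pointwise convergence of finite convex functions), which bypasses the prox-point argument entirely. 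A minor further imprecision: nonexpansiveness of each $\nabla e_1f^\nu$ alone does not make the family $\{e_1f^\nu\}$ equi-Lipschitz on a bounded set; you also need $\|\nabla e_1f^\nu(x_0)\|$ bounded in $\nu$ at one point, which does follow from the already-established pointwise convergence by a three-point convexity estimate, but should be said.
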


Two more nice properties about Moreau envelopes are:
\begin{fact}\emph{\cite[Example 1.46]{rockwets}}\label{fact1}
For any proper, lsc function $f:\R^n\rightarrow\overline{\R},$ $\inf f=\inf e_1f.$
\end{fact}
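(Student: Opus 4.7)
The plan is to establish the equality by two straightforward inequalities derived directly from the definition of the Moreau envelope. First, for any $x\in\R^n$, choosing $y=x$ in the infimum defining $e_1f(x)$ gives
$$e_1f(x)=\inf_y\left\{f(y)+\frac{1}{2}\|y-x\|^2\right\}\leq f(x)+\frac{1}{2}\|x-x\|^2=f(x),$$
so $e_1f\leq f$ pointwise and hence $\inf e_1f\leq\inf f$. Second, since $\frac{1}{2}\|y-x\|^2\geq 0$ for every $x,y\in\R^n$, one has $f(y)+\frac{1}{2}\|y-x\|^2\geq f(y)\geq\inf f$; taking the infimum first over $y$ yields $e_1f(x)\geq\inf f$ for each $x\in\R^n$, and then taking the infimum over $x$ gives $\inf e_1f\geq\inf f$. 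Combining the two inequalities proves the claim.

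There is no substantive obstacle here: the argument rests solely on the pointwise bound $e_1f\leq f$, obtained from the admissible choice $y=x$, and on the trivial lower bound $f(y)+\frac{1}{2}\|y-x\|^2\geq\inf f$. Even in the degenerate case $\inf f=-\infty$, both sides collapse to $-\infty$ and the inequalities remain valid. Properness of $f$ guarantees that $f$ is not identically $+\infty$ and takes no value $-\infty$, so neither $\inf f$ nor $\inf e_1f$ is vacuous, and no additional hypotheses (such as convexity or coercivity) are needed.
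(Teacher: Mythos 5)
Your proof is correct and complete: the two inequalities $e_1f\leq f$ (from the admissible choice $y=x$) and $e_1f(x)\geq\inf f$ (from nonnegativity of the quadratic term) are exactly the standard argument, and your remarks on properness and the $-\infty$ case are accurate. The paper itself offers no proof, citing \cite[Example 1.46]{rockwets} instead, and your argument is precisely the one found there, so nothing further is needed.
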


\begin{lem}\emph{\cite[Theorem 31.5]{convanalrock}}\label{lemmorconj}
Let $f\in\Gamma_0(\R^n).$ Then
$$e_1f(x)+e_1f^*(x)=\frac{1}{2}\|x\|^2.$$
\end{lem}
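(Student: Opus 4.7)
The plan is to exploit the fact that the Moreau envelope is an infimal convolution with the quadratic kernel $q(x):=\tfrac{1}{2}\|x\|^{2}$, and then apply the standard conjugate duality between infimal convolution and addition.

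First I would write
$$e_{1}f(x)=\inf_{y}\bigl\{f(y)+q(x-y)\bigr\}=(f\,\square\,q)(x),$$
recognizing the Moreau envelope as the infimal convolution $f\,\square\,q$. Since $q$ is a finite, continuous, convex function on $\R^{n}$, the conjugate identity
$$(f\,\square\,q)^{*}=f^{*}+q^{*}$$
applies. The Legendre conjugate of $q$ is itself ($q^{*}=q$), so $(e_{1}f)^{*}=f^{*}+q$. Because $f\in\Gamma_{0}(\R^{n})$ and $q$ is real-valued and coercive, $e_{1}f$ is real-valued, continuous and convex, hence lies in $\Gamma_{0}(\R^{n})$, so the Fenchel--Moreau biconjugate theorem gives
$$e_{1}f=(e_{1}f)^{**}=(f^{*}+q)^{*}.$$

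Next I would compute $e_{1}f^{*}$ directly by expanding the square:
$$e_{1}f^{*}(x)=\inf_{y}\Bigl\{f^{*}(y)+\tfrac{1}{2}\|y\|^{2}-\langle y,x\rangle\Bigr\}+\tfrac{1}{2}\|x\|^{2}=\tfrac{1}{2}\|x\|^{2}-\sup_{y}\Bigl\{\langle y,x\rangle-\bigl(f^{*}(y)+q(y)\bigr)\Bigr\}.$$
The supremum on the right is exactly $(f^{*}+q)^{*}(x)$, which by the previous paragraph equals $e_{1}f(x)$. Therefore
$$e_{1}f^{*}(x)=\tfrac{1}{2}\|x\|^{2}-e_{1}f(x),$$
which rearranges into the claimed identity.

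The only delicate point is justifying the formula $(f\,\square\,q)^{*}=f^{*}+q^{*}$ and the biconjugation $e_{1}f=(f^{*}+q)^{*}$; both follow from $q$ being finite and continuous on $\R^{n}$ (so the infimal convolution is exact and lsc) together with $f\in\Gamma_{0}(\R^{n})$. Once these standard facts from convex analysis are invoked, the rest is just completing the square.
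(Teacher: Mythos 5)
Your proof is correct. The paper does not actually prove this lemma --- it is quoted as Theorem 31.5 of Rockafellar's \emph{Convex Analysis} --- so there is no in-paper argument to compare against; your derivation (write $e_1f=f\,\square\,q$ with $q=\tfrac12\|\cdot\|^2$, use $(f\,\square\,q)^*=f^*+q^*$ and $q^*=q$, biconjugate using $e_1f\in\Gamma_0(\R^n)$, then complete the square in the definition of $e_1f^*$ to recognize $(f^*+q)^*$) is the standard route behind the cited result, and every hypothesis you invoke is satisfied. One minor remark: the identity $(f\,\square\,q)^*=f^*+q^*$ holds for arbitrary proper functions with no qualification condition whatsoever (it is the reverse identity $(f+g)^*=f^*\,\square\,g^*$ that requires one), so the finiteness and continuity of $q$ is genuinely needed only to guarantee that $e_1f$ is real-valued convex, hence in $\Gamma_0(\R^n)$, so that the Fenchel--Moreau biconjugation step applies.
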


For more properties of Moreau envelopes of functions, we refer the reader to \cite{attouch1984, convmono, convanalrock, rockwets}.

\subsection{Strong minimizers, coercive convex functions and strongly convex functions}

We now present some basic properties of strong minimizers, strongly convex functions, and coercive functions.
\begin{df}
A function $f:\R^n\rightarrow \overline{\R}$ is said to attain a \emph{strong minimum} at $\bar{x}\in\R^n$
if
\begin{enumerate}
\item $f(\bar{x})\leq f(x)$ for all $x\in\dom f,$ and
\item $f(x_n)\rightarrow f(\bar{x})$ implies $x_n\rightarrow\bar{x}.$
\end{enumerate}
\end{df}
For further information on strong minimizers, we refer readers to \cite{lucchetti2006, borweinzhu, smoothvarprincip}.

\begin{df}
A function $f\in\Gamma_0(\R^n)$ is called \emph{coercive} if
$$\liminf\limits_{\|x\|\rightarrow\infty}\frac{f(x)}{\|x\|}=\infty.$$
\end{df}

\begin{df}
A function $f\in\Gamma_0(\R^n)$ is \emph{strongly convex} if there exists a modulus $\sigma>0$ such that $f-\frac{\sigma}{2}\|\cdot\|^2$ is convex. Equivalently, $f$ is strongly convex if there exists $\sigma>0$ such that for all $\lambda\in[0,1]$ and for all $x,y\in\R^n,$
$$f(\lambda x+(1-\lambda)y)\leq\lambda f(x)+(1-\lambda)f(y)-\frac{\sigma}{2}\lambda(1-\lambda)\|x-y\|^2.$$
\end{df}

\begin{df}
The \emph{Fenchel conjugate} of $f:\R^n\rightarrow\overline{\R}$ is defined as
$$f^*(v):=\sup\limits_x\{\langle v,x\rangle-f(x)\}.$$
\end{df}

\begin{fact}\emph{(\cite[Exercise 21 p. 83]{convanal}, \cite[Theorem 11.8]{rockwets})}\label{domfcoercive}
Let $f\in\Gamma_0(\R^n).$ Then $f$ is coercive if and only if $\dom f^*=\R^n.$
\end{fact}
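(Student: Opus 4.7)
The plan is to prove the equivalence by two direct arguments: the forward implication by a simple upper-bound estimate on $f^*$, and the reverse implication by contraposition, locating a test vector along which $f$ grows slowly enough that the conjugate blows up.

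For the forward direction, assume $f$ is coercive and fix an arbitrary $v\in\R^n$; the goal is to show $f^*(v)=\sup_x(\langle v,x\rangle-f(x))<\infty$. I would choose $M>\|v\|$ and use coercivity to pick $R>0$ so that $f(x)\geq M\|x\|$ whenever $\|x\|\geq R$. Cauchy--Schwarz then gives
$$\langle v,x\rangle-f(x)\leq(\|v\|-M)\|x\|\leq 0 \quad\text{for all } \|x\|\geq R,$$
while on the compact ball $\B_R[0]$ the function $x\mapsto\langle v,x\rangle-f(x)$ is upper semicontinuous (since $f$ is lsc) and hence bounded above. Combining the two bounds yields $f^*(v)<\infty$, so $\dom f^*=\R^n$.

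For the reverse direction I would argue the contrapositive: suppose $f$ is not coercive, i.e., $L:=\liminf_{\|x\|\to\infty}f(x)/\|x\|<\infty$, and produce some $v\notin\dom f^*$. Pick a sequence $(x_k)$ with $\|x_k\|\to\infty$ and $f(x_k)/\|x_k\|\to L$, and, by passing to a subsequence, assume $u_k:=x_k/\|x_k\|\to u$ for some unit vector $u$. Setting $v:=(L+1)u$, a direct computation gives
$$\langle v,x_k\rangle-f(x_k)=\|x_k\|\bigl((L+1)\langle u,u_k\rangle-f(x_k)/\|x_k\|\bigr),$$
and the bracketed factor tends to $(L+1)-L=1>0$. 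Hence $\langle v,x_k\rangle-f(x_k)\to\infty$, forcing $f^*(v)=\infty$ and $v\notin\dom f^*$.

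The main obstacle is really the contrapositive direction: one must correctly identify a ``slow-growth'' direction of $f$ and pair it with a linear functional of sufficiently large slope to blow up the supremum. The normalization $u_k=x_k/\|x_k\|$ together with compactness of the unit sphere is what makes such a direction $u$ available; any $v$ whose inner product with $u$ strictly exceeds $L$ will then witness $v\notin\dom f^*$. Properness and lower semicontinuity of $f$ are used only in the forward direction, to ensure that the sup over the compact ball $\B_R[0]$ is finite; convexity is not explicitly invoked in either half, although it is built into the standing assumption $f\in\Gamma_0(\R^n)$.
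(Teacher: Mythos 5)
The paper offers no proof of this statement---it is quoted as a known fact with citations to external references---so there is nothing in-paper to compare against; your proposal supplies a self-contained elementary argument, and it is essentially correct. The forward direction is fine: coercivity gives $f(x)\geq M\|x\|$ with $M>\|v\|$ outside a ball, Cauchy--Schwarz kills the tail, and upper semicontinuity of $x\mapsto\langle v,x\rangle-f(x)$ (together with properness, so the function never equals $+\infty$) bounds the supremum over the compact ball $\B_R[0]$. The one point that needs a line of justification is in the contrapositive direction: you set $L:=\liminf_{\|x\|\rightarrow\infty}f(x)/\|x\|<\infty$ and then form $v:=(L+1)u$, which tacitly assumes $L>-\infty$; otherwise the arithmetic $(L+1)-L=1$ and the very definition of $v$ break down. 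For $f\in\Gamma_0(\R^n)$ this is automatic, since a proper lsc convex function is minorized by an affine function $\langle a,\cdot\rangle+b$, whence $f(x)/\|x\|\geq -\|a\|+b/\|x\|$ is bounded below as $\|x\|\rightarrow\infty$ (alternatively, if $L=-\infty$ one checks directly that $f^*\equiv\infty$, contradicting properness of $f^*$). This also corrects your closing remark: properness, lower semicontinuity and convexity are not confined to the forward half---they are exactly what guarantees the finiteness of $L$ that your construction of the witness $v$ relies on. With that one observation added, the proof is complete.
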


\begin{lem}\label{lem3}
The function $f\in\Gamma_0(\R^n)$ is strongly convex if and only if $e_1f$ is strongly convex.
\end{lem}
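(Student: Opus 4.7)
The plan is to reduce both implications to a pair of standard convex-analytic facts: (i) for $g\in\Gamma_0(\R^n)$, $g$ is $\mu$-strongly convex if and only if $g^*$ is Fr\'echet differentiable on all of $\R^n$ with $\tfrac{1}{\mu}$-Lipschitz continuous gradient; and (ii) the conjugate identity
$$(e_1 f)^* = f^* + \tfrac{1}{2}\|\cdot\|^2.$$
Identity (ii) follows by writing $e_1 f = f\,\square\,\tfrac{1}{2}\|\cdot\|^2$ and applying the conjugate-of-infimal-convolution formula together with self-conjugacy of $\tfrac{1}{2}\|\cdot\|^2$; alternatively, one can obtain it by applying Lemma \ref{lemmorconj} to $f^*$ and rearranging.

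For the forward direction I would assume that $f$ is $\sigma$-strongly convex. Then (i) gives $f^*\in C^{1,1/\sigma}(\R^n)$, and adding the $C^{1,1}$ quadratic $\tfrac{1}{2}\|\cdot\|^2$ yields $(e_1 f)^* \in C^{1,\,1+1/\sigma}(\R^n)$. Since $e_1 f\in\Gamma_0(\R^n)$, a reverse application of (i) gives that $e_1 f$ is $\tfrac{\sigma}{1+\sigma}$-strongly convex. For the backward direction I would assume that $e_1 f$ is $\tau$-strongly convex. Then by (i), $(e_1 f)^*$ has $\tfrac{1}{\tau}$-Lipschitz gradient on $\R^n$, and subtracting $\tfrac{1}{2}\|\cdot\|^2$ shows $f^* = (e_1 f)^* - \tfrac{1}{2}\|\cdot\|^2 \in C^{1,\,1+1/\tau}(\R^n)$; a final application of (i) yields that $f$ is $\tfrac{\tau}{1+\tau}$-strongly convex.

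The main obstacle is the backward direction, which resists a direct definitional attack. One might hope to split $f = e_1 f + (f - e_1 f)$ and inherit strong convexity from $e_1 f$, but $f - e_1 f$ need not be convex (for instance, if $f(x)=|x|$ then $e_1 f$ is the Huber function and $f - e_1 f$ is concave on $[-1,1]$). The conjugate route bypasses this difficulty by moving to the dual picture, where addition or subtraction of the quadratic $\tfrac{1}{2}\|\cdot\|^2$ manifestly preserves $C^{1,1}$ regularity of the gradient. The remaining care lies in invoking (i) and (ii), both classical; (ii) requires only that the infimal convolution $f\,\square\,\tfrac{1}{2}\|\cdot\|^2$ be exact, which holds throughout $\R^n$ because $\Prox_f^1(x)$ is attained and single-valued for every $x\in\R^n$ when $f\in\Gamma_0(\R^n)$.
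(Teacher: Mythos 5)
Your proposal is correct and follows essentially the same route as the paper's proof: both rest on the duality between $\sigma$-strong convexity of $g\in\Gamma_0(\R^n)$ and $\tfrac{1}{\sigma}$-Lipschitz continuity of $\nabla g^*$, combined with the identity $(e_1f)^*=f^*+\tfrac{1}{2}\|\cdot\|^2$, so that passing between $\nabla f^*$ and $\nabla(e_1f)^*=\nabla f^*+\Id$ shifts the Lipschitz constant by $1$ in either direction. Your version is in fact slightly more explicit than the paper's, which dispatches the backward implication with ``working backwards with the same argument,'' whereas you record the resulting moduli $\tfrac{\sigma}{1+\sigma}$ and $\tfrac{\tau}{1+\tau}$ and explain why the naive splitting $f=e_1f+(f-e_1f)$ fails.
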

\begin{proof} By \cite[Proposition 12.6]{rockwets}, $f$ is strongly convex if and only if $\nabla f^*$ is $\frac{1}{\sigma}$-Lipschitz for some $\sigma>0.$ Now
\begin{align*}
(e_1f)^*&=f^*+\frac{1}{2}\|\cdot\|^2,\mbox{ and}\\
\nabla(e_1f)^*&=\nabla f^*+\Id.
\end{align*}
Suppose that $f$ is strongly convex. Since $\nabla f^*$ is $\frac{1}{\sigma}$-Lipschitz, we have that $\nabla f^*+\Id$ is $\left(1+\frac{1}{\sigma}\right)$-Lipschitz. Hence, $\nabla(e_1f)^*$ is $\left(1+\frac{1}{\sigma}\right)$-Lipschitz. Then $e_1f$ is strongly convex, and we have proved one direction of the lemma. Working backwards with the same argument, the other direction is proved as well.
\end{proof}
\begin{lem}
Let $f\in\Gamma_0(\R^n).$ Then $f$ is coercive if and only if $e_1f$ is coercive.
\end{lem}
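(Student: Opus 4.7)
The plan is to reduce the equivalence to a statement about the effective domains of the Fenchel conjugates of $f$ and $e_1 f$, and then invoke Fact \ref{domfcoercive}, which characterizes coercivity of a function in $\Gamma_0(\R^n)$ by the requirement that its conjugate have full domain.

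First, I would observe that $e_1 f \in \Gamma_0(\R^n)$: by the cited proposition preceding the definition of epi-convergence, $e_1 f$ is continuously differentiable and real-valued on $\R^n$, and convexity is inherited from $f$. Hence Fact \ref{domfcoercive} applies to both $f$ and $e_1 f$, giving
\begin{equation*}
f \text{ coercive} \iff \dom f^* = \R^n, \qquad e_1 f \text{ coercive} \iff \dom (e_1 f)^* = \R^n.
\end{equation*}
So the lemma will follow once I show $\dom (e_1 f)^* = \dom f^*$.

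Second, I would use the conjugate formula $(e_1 f)^* = f^* + \tfrac{1}{2}\|\cdot\|^2$, which was already recalled and used in the proof of Lemma \ref{lem3}. Since $e_1 f = f \,\square\, \tfrac{1}{2}\|\cdot\|^2$ (inf-convolution) and the quadratic $\tfrac{1}{2}\|\cdot\|^2$ is self-conjugate and finite everywhere, this follows from the standard rule $(g \,\square\, h)^* = g^* + h^*$ for proper lsc convex $g,h$. In particular, because the quadratic has full domain, adding it leaves the effective domain unchanged:
\begin{equation*}
\dom (e_1 f)^* \;=\; \dom f^* \cap \dom \tfrac{1}{2}\|\cdot\|^2 \;=\; \dom f^*.
\end{equation*}
Chaining the three equivalences then yields $f$ coercive iff $e_1 f$ coercive.

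I do not expect a serious obstacle; the only minor care needed is to justify that $e_1 f \in \Gamma_0(\R^n)$ so that Fact \ref{domfcoercive} is applicable on that side. If one preferred to avoid conjugates entirely, a direct argument is available: the inequality $e_1 f \leq f$ immediately gives that coercivity of $e_1 f$ forces coercivity of $f$, while for the reverse direction one can use $e_1 f(x) = f(\Prox_f^1(x)) + \tfrac{1}{2}\|\Prox_f^1(x) - x\|^2$ together with boundedness properties of the proximal mapping to conclude that $e_1 f(x)/\|x\| \to \infty$. However, the conjugate route above is cleaner, reuses machinery already established, and parallels the structure of the preceding Lemma \ref{lem3}.
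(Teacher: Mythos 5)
Your proposal is correct and follows essentially the same route as the paper's proof: both reduce coercivity to full domain of the conjugate via Fact \ref{domfcoercive} and use the identity $(e_1f)^*=f^*+\tfrac{1}{2}\|\cdot\|^2$ to see that the two conjugates have the same domain. Your write-up is somewhat more explicit about why $e_1f\in\Gamma_0(\R^n)$ and about the domain identity, but the argument is the same.
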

\begin{proof}
Suppose that $f$ is coercive. By Fact~\ref{domfcoercive}, a function is coercive if and only if its Fenchel conjugate is full-domain. Since $(e_1f)^*=f^*+\frac{1}{2}\|\cdot\|^2,$ we have that $(e_1f)^*$ is full-domain. Hence, $e_1f$ is coercive. To prove the other direction, suppose that $e_1f$ is coercive, and an identical argument shows that $f$ is coercive as well.
\end{proof}
\begin{lem}\label{lem4}
Let $f\in\Gamma_0(\R^n)$ be strongly convex. Then $f$ is coercive.
\end{lem}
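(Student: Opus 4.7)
The plan is to exploit the definition of strong convexity directly to obtain a quadratic lower bound on $f$, which will force the coercivity ratio to blow up.

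First, let $\sigma>0$ be a modulus of strong convexity for $f$, so that the function
$$g:=f-\frac{\sigma}{2}\|\cdot\|^2$$
lies in $\Gamma_0(\R^n)$. Since $g$ is proper, lsc, and convex on $\R^n$, it admits a (continuous) affine minorant: there exist $v\in\R^n$ and $c\in\R$ such that $g(x)\ge \langle v,x\rangle+c$ for every $x\in\R^n$. This is a standard consequence of the Hahn--Banach/separation argument applied to $\epi g$ (see, e.g., \cite[Proposition 9.20]{convmono}), and it is the only nontrivial ingredient I will need. Rewriting the bound in terms of $f$ yields
$$f(x)\ \ge\ \frac{\sigma}{2}\|x\|^2+\langle v,x\rangle+c\qquad(\forall\,x\in\R^n).$$

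For any $x\ne 0$, Cauchy--Schwarz then gives
$$\frac{f(x)}{\|x\|}\ \ge\ \frac{\sigma}{2}\|x\|-\|v\|+\frac{c}{\|x\|},$$
and letting $\|x\|\to\infty$ the right-hand side tends to $\infty$. Hence
$$\liminf_{\|x\|\to\infty}\frac{f(x)}{\|x\|}=\infty,$$
which is precisely the definition of coercivity.

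There is no real obstacle here; the only point requiring any care is the existence of the affine minorant of $g$, and this is immediate from the fact that a proper lsc convex function on $\R^n$ is the supremum of its continuous affine minorants. An alternative route would be to use Lemma~\ref{lem3} together with \cite[Proposition 12.6]{rockwets}: strong convexity of $f$ is equivalent to $\nabla f^*$ being $\frac{1}{\sigma}$-Lipschitz, which forces $\dom f^*=\R^n$, and then coercivity of $f$ follows from Fact~\ref{domfcoercive}. I would prefer the direct affine-minorant argument above, as it is self-contained and avoids invoking the conjugate machinery.
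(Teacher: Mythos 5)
Your proposal is correct and follows essentially the same route as the paper: decompose $f$ as a convex function plus $\frac{\sigma}{2}\|\cdot\|^2$, bound the convex part below by an affine minorant, and let the quadratic term drive $\frac{f(x)}{\|x\|}\to\infty$. The only difference is that you spell out the final Cauchy--Schwarz estimate, which the paper leaves implicit.
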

\begin{proof} Since $f$ is strongly convex, $f$ can be written as $g+\frac{\sigma}{2}\|\cdot\|^2$ for some $g\in
\Gamma_{0}(\R^n)$ and $\sigma>0.$ Since $g$ is convex, $g$ is bounded below by a hyperplane. That is, there exist $\tilde{x}\in\R^n$ and $r\in\R$ such that
$$g(x)\geq\langle\tilde{x},x\rangle+r\mbox{ for all }x\in\R^n.$$
Hence,
$$f(x)\geq\langle\tilde{x},x\rangle+r+\frac{\sigma}{2}\|x\|^2\mbox{ for all }x\in\R^n.$$
This gives us that
$$\liminf\limits_{\|x\|\rightarrow\infty}\frac{f(x)}{\|x\|}=\infty.$$
\end{proof}

Note that a convex function can be coercive, but fail to be strongly convex. Consider the following example.
\begin{ex}
For $x\in\R,$ define
$$f(x):=\begin{cases}
(x+1)^2& \text{ if $x<-1$},\\
0  &\text{ if $-1\leq x\leq1$},\\
(x-1)^2 & \text{ if $x>1.$}
\end{cases}$$
Then $f(x)$ is coercive, but not strongly convex.
\end{ex}

\begin{proof}
It is elementary to show that $f$ is convex and coercive.
\begin{figure}[H]
\begin{center}\includegraphics[scale=0.3]{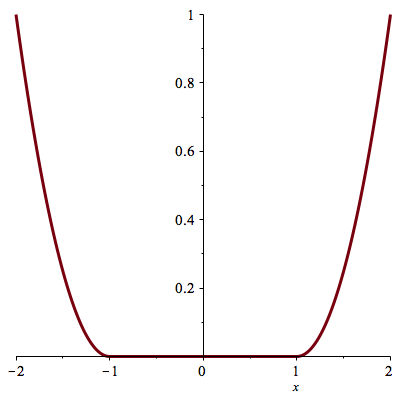}\end{center}
\end{figure}
Suppose that $f$ is strongly convex, and let $x=-1,$ $y=1,$ $\lambda=\frac{1}{2}.$ Then, for some $\sigma>0,$ we have
\begin{align*}
f(\lambda x+(1-\lambda)y)&\leq\lambda f(x)+(1-\lambda)f(y)-\frac{\sigma}{2}\lambda(1-\lambda)|x-y|^2,\\
f\left(\frac{1}{2}(-1)+\frac{1}{2}(1)\right)&\leq\frac{1}{2}f(-1)+\frac{1}{2}f(1)-\frac{\sigma}{2}\frac{1}{4}|-1-1|^2,\\
0&\leq-\frac{\sigma}{2},
\end{align*}
a contradiction. Therefore, $f$ is not strongly convex.
\end{proof}
\begin{lem}\label{lem5}
Let $f:\Gamma_0(\R^n)\rightarrow\overline{\R}$ be strongly convex. Then the (unique) minimizer of $f$ is a strong minimizer.
\end{lem}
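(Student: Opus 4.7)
The plan is to establish the classical quadratic growth inequality that a strongly convex function satisfies at its minimizer, and then deduce the strong minimum property directly from it.

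First I would address the existence and uniqueness of the minimizer. Lemma~\ref{lem4} shows that any strongly convex $f \in \Gamma_0(\R^n)$ is coercive, so together with lower semicontinuity, $f$ attains its infimum on $\R^n$. Strict convexity (a trivial consequence of strong convexity) forces this minimizer to be unique; call it $\bar x$.

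Next, I would exploit the defining inequality of strong convexity. Given a sequence $(x_n)$ with $f(x_n) \to f(\bar x)$, apply the definition with $x = x_n$, $y = \bar x$, and parameter $\lambda \in (0,1)$ to obtain
\[
f(\lambda x_n + (1-\lambda)\bar x) \leq \lambda f(x_n) + (1-\lambda) f(\bar x) - \frac{\sigma}{2}\lambda(1-\lambda)\|x_n - \bar x\|^2.
\]
Since $\bar x$ minimizes $f$, the left-hand side is at least $f(\bar x)$, and a little rearrangement yields
\[
\frac{\sigma}{2}(1-\lambda)\|x_n - \bar x\|^2 \leq f(x_n) - f(\bar x).
\]
Letting $\lambda \to 0^+$ (or simply fixing $\lambda = 1/2$) gives the quadratic growth bound $\frac{\sigma}{2}\|x_n - \bar x\|^2 \leq f(x_n) - f(\bar x)$, which is the workhorse inequality.

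Finally, since $f(x_n) - f(\bar x) \to 0$ by hypothesis, the inequality forces $\|x_n - \bar x\|^2 \to 0$, so $x_n \to \bar x$, verifying condition (ii) of the definition of strong minimum. I do not anticipate any genuine obstacle here; the only care needed is to keep $\lambda$ strictly between $0$ and $1$ when invoking the strong convexity inequality, and to ensure we invoke coercivity to secure existence of $\bar x$ in the first place.
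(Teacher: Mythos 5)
Your proof is correct, but it follows a genuinely different route from the paper's. The paper argues by compactness: it takes a minimizing sequence, uses coercivity (Lemma~\ref{lem4}) to get boundedness, extracts a convergent subsequence via Bolzano--Weierstrass, identifies the limit as the unique minimizer using lower semicontinuity and strict convexity, and then concludes that the whole sequence converges because every subsequential limit must coincide with $\bar{x}$. You instead derive the quadratic growth estimate $\tfrac{\sigma}{2}\|x_n-\bar{x}\|^2\leq f(x_n)-f(\bar{x})$ directly from the strong convexity inequality (your algebra checks out: bounding the left-hand side below by $f(\bar{x})$, cancelling, dividing by $\lambda$, and letting $\lambda\to 0^+$), which makes the convergence $x_n\to\bar{x}$ immediate and quantitative. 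Your approach buys an explicit rate and, more importantly, avoids Bolzano--Weierstrass entirely, so it survives verbatim in infinite-dimensional Hilbert spaces where the paper's compactness step would fail for norm convergence --- relevant given the authors' stated intention to extend their results to Hilbert spaces. The paper's approach, on the other hand, establishes existence of the minimizer and the strong minimum property in a single sweep, whereas you must first secure attainment separately via coercivity plus lower semicontinuity (a standard Weierstrass-type argument, so this is not a gap, just an extra step to state).
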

\begin{proof} Let $f(x_k)\rightarrow\inf\limits_xf(x).$ Since $f$ is coercive by Lemma~\ref{lem4},
$\{x_k\}_{k=1}^\infty$ is bounded. By the Bolzano-Weierstrass Theorem, $\{x_k\}_{k=1}^\infty$ has a convergent subsequence $x_{k_j}\rightarrow\bar{x}.$ Since $f$ is lsc, we have that $\liminf\limits_{k\rightarrow\infty}f(x_k)\geq f(\bar{x}).$ Hence,
$$\inf\limits_xf(x)\leq f(\bar{x})\leq\inf\limits_xf(x).$$
Therefore, $f(\bar{x})=\inf\limits_xf(x).$ Since strong convexity implies strict convexity, $\argmin f(x)=\{\bar{x}\}$ is unique. As every subsequence of $\{x_k\}_{k=1}^\infty$ converges to the same limit $\bar{x},$ we conclude that $x_k\rightarrow\bar{x}.$
\end{proof}

To conclude this section, we provide an example that demonstrates the existence of functions that have strong minimizers, and yet are not strongly convex.
\begin{ex}
Let $f:\R\rightarrow\R,$ $f(x)=x^4.$ The function $f$ attains a strong minimum at $\bar{x}=0,$ but is not strongly convex.
\end{ex}
\begin{proof} By definition, $f$ is strongly convex if and only if there exists $\sigma>0$ such that $g(x):=x^4-\frac{\sigma}{2}x^2$ is convex. Since $g$ is a differentiable, univariable function, we know it is convex if and only if its second derivative is nonnegative for all $x\in\R.$ Since $g''(x)=12x^2-\sigma$ is clearly not nonnegative for any fixed $\sigma>0$ and all $x\in\R,$ we have that $g$ is not convex. Therefore, $f$ is not strongly convex.
Clearly zero is the minimum and minimizer of $f.$ Let $\{x_n\}_{n=1}^\infty\subseteq\R$ be such that $f(x_n)\rightarrow f(0)=0.$ Then
$
\lim\limits_{n\rightarrow\infty}x_n^4=0$ implies
$\lim\limits_{n\rightarrow\infty}x_n=0.$
Therefore, $f$ attains a strong minimum.
\end{proof}

\section{A complete metric space using Moreau envelopes}\label{sec:metric}
The principal tool we use is the Baire category theorem. To this end, we need a Baire space.
In this section, we establish a complete metric space whose distance function makes use of the Moreau envelope.
This metric has been used by Attouch-Wets in \cite[page 38]{attouchwet86}.
The distances used in the next section refer to the metric established here.

We begin with some properties on the Moreau envelope set
$$e_{1}(\Gamma_{0}(\R^n)):=\{e_{1}f:\ f\in\Gamma_{0}(\R^n)\}.$$

\begin{thm}\label{thm1}
The set $e_1(\Gamma_0(\R^n))$ is a convex set in $\Gamma_0(\R^n).$
\end{thm}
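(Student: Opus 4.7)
The plan is to take $\varphi_1 := e_1 f_1$ and $\varphi_2 := e_1 f_2$ with $f_1, f_2 \in \Gamma_0(\R^n)$ and $\lambda \in [0,1]$, set $\varphi := \lambda \varphi_1 + (1-\lambda) \varphi_2$, and produce $h \in \Gamma_0(\R^n)$ with $e_1 h = \varphi$. That $\varphi \in \Gamma_0(\R^n)$ itself is immediate, since each $\varphi_i$ is real-valued, continuous, and convex on $\R^n$, so $\varphi$ is too.

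My approach is to pass to the Fenchel conjugate. The identity $(e_1 g)^* = g^* + q$ for $g \in \Gamma_0(\R^n)$, where $q := \tfrac{1}{2}\|\cdot\|^2$, follows from the infimal-convolution representation $e_1 g = g \,\square\, q$ together with $(f_1 \,\square\, f_2)^* = f_1^* + f_2^*$; it also dovetails with Lemma~\ref{lemmorconj}. This suggests defining $h$ via its conjugate by $h^* := \varphi^* - q$: provided $\varphi^* - q \in \Gamma_0(\R^n)$, then $h := (\varphi^* - q)^* \in \Gamma_0(\R^n)$ has $h^{**} = h^*$, whence $(e_1 h)^* = h^* + q = \varphi^*$ and biconjugation (using $\varphi \in \Gamma_0$) yields $e_1 h = \varphi^{**} = \varphi$, as desired.

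The main obstacle is verifying $\varphi^* - q \in \Gamma_0(\R^n)$, which reduces to convexity (properness follows from $\varphi$ being proper, and lower semicontinuity from $\varphi^*$ being lsc and $q$ continuous). To establish convexity I would compute $\varphi^*$ directly. Unpacking the two Moreau envelopes yields
$$\varphi(x) = \inf_{z_1,z_2}\bigl\{\lambda f_1(z_1) + (1-\lambda) f_2(z_2) + \tfrac{\lambda}{2}\|z_1-x\|^2 + \tfrac{1-\lambda}{2}\|z_2-x\|^2\bigr\},$$
so $\varphi^*(y)$ becomes a joint supremum over $x, z_1, z_2$. For fixed $z_1, z_2$ the inner objective is a strictly concave quadratic in $x$, maximized at $x^\ast = y + \lambda z_1 + (1-\lambda) z_2$; substituting back and simplifying (the $\|x\|^2$-terms combine to produce $q(y) + \tfrac{\lambda(1-\lambda)}{2}\|z_1 - z_2\|^2$) gives
$$\varphi^*(y) = q(y) + \sup_{z_1,z_2}\bigl\{\lambda\langle y,z_1\rangle - \lambda f_1(z_1) + (1-\lambda)\langle y,z_2\rangle - (1-\lambda) f_2(z_2) - \tfrac{\lambda(1-\lambda)}{2}\|z_1-z_2\|^2\bigr\}.$$
Thus $\varphi^* - q$ is a pointwise supremum of affine functions of $y$, hence convex. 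This gives $h := (\varphi^* - q)^* \in \Gamma_0(\R^n)$ with $e_1 h = \varphi$, completing the argument.
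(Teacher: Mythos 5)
Your argument is correct, but it takes a genuinely different route from the paper. The paper's proof is a two-line appeal to the proximal average machinery: by the cited results on the proximal average, $\lambda e_1f_1+(1-\lambda)e_1f_2$ is exactly the Moreau envelope of the proximal average $P_1(f,\lambda)$, which is known to lie in $\Gamma_0(\R^n)$. You instead give a self-contained conjugacy argument: writing $\varphi:=\lambda e_1f_1+(1-\lambda)e_1f_2$, you show $\varphi^*-q$ is a pointwise supremum of affine functions of $y$ (I checked the partial maximization over $x$ at $x^*=y+\lambda z_1+(1-\lambda)z_2$ and the resulting identity; the displayed formula with the coupling term $-\tfrac{\lambda(1-\lambda)}{2}\|z_1-z_2\|^2$ is right), hence convex and lsc, and properness is inherited from $\varphi$; then $h:=(\varphi^*-q)^*$ satisfies $e_1h=\varphi$ by biconjugation. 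In effect you have rederived the proximal average from scratch -- your $h$ is $P_1(f,\lambda)$ -- so your proof buys independence from the external reference at the cost of an explicit computation, while the paper's buys brevity and the extra information that the preimage is the proximal average. Two cosmetic points: the parenthetical about the $\|x\|^2$-terms combining to produce ``$q(y)+\tfrac{\lambda(1-\lambda)}{2}\|z_1-z_2\|^2$'' has an ambiguous sign (the final formula correctly carries a minus sign on the coupling term), and the degenerate cases $\lambda\in\{0,1\}$ are best dispatched separately to avoid $0\cdot\infty$ in the joint infimum.
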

\begin{proof} Let $f_1,f_2\in\Gamma_0(\R^n),$ $\lambda\in[0,1].$ Then $e_1f_1,e_1f_2\in e_1(\Gamma_0(\R^n)).$ We need to show that $\lambda e_1f_1+(1-\lambda)e_1f_2\in e_1(\Gamma_0(\R^n)).$ By \cite[Theorem 6.2]{proxbas} with $\mu=1$ and $n=2,$ we have that $\lambda e_1f_1+(1-\lambda)e_1f_2$ is the Moreau envelope of the proximal average function $P_1(f,\lambda).$ By \cite[Corollary 5.2]{proxbas}, we have that $P_1(f,\lambda)\in\Gamma_0(\R^n).$ Hence, $e_1P_1(f,\lambda)\in e_1(\Gamma_0(\R^n)),$ and we conclude that $e_1(\Gamma_0(\R^n))$ is a convex set.
\end{proof}

On $e_{1}(\Gamma_{0}(\R^n)),$ define a metric by
\begin{equation}\label{e:m:envelope}
\tilde{d}(\tilde{f},\tilde{g}):= \sum\limits_{i=1}^\infty\frac{1}{2^i}\frac{\|\tilde{f}-\tilde{g}\|_i}{1+
\|\tilde{f}-\tilde{g}\|_i},
\end{equation}
where $\|\tilde{f}-\tilde{g}\|_i:=\sup\limits_{\|x\|\leq i}|\tilde{f}(x)-\tilde{g}(x)|$ and
$\tilde{f}, \tilde{g}\in e_{1}(\Gamma_{0}(\R^n))$.

Note that a sequence of functions in $(e_{1}(\Gamma_{0}(\R^n)),\tilde{d})$ converges if and only if the sequence
 converges uniformly on bounded sets, if and only if the sequence converges pointwise on $\R^n$.

\begin{thm}\label{thm2}
The metric space $(e_1(\Gamma_0(\R^n)),\tilde{d})$ is complete.
\end{thm}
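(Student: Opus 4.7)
Because $\tilde d$ metrizes uniform convergence on bounded subsets of $\R^n$, the proof splits naturally into two tasks: produce a candidate limit, then show the limit lies in $e_1(\Gamma_0(\R^n))$.

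For the first task, I would take a $\tilde d$-Cauchy sequence $\{\tilde f_k\}=\{e_1 f_k\}$. The weighted-sum structure of $\tilde d$ forces $\{\tilde f_k\}$ to be uniformly Cauchy on every closed ball $\B_i[0]$; each $\tilde f_k$ is continuous, so completeness of $(C(\B_i[0]), \|\cdot\|_\infty)$ together with a diagonal argument over $i$ produces a continuous $\tilde f:\R^n\to\R$ with $\tilde f_k\to\tilde f$ uniformly on bounded subsets of $\R^n$, and hence $\tilde d(\tilde f_k, \tilde f)\to 0$.

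For the second task I would first extract two convexity facts about $\tilde f$. Each $\tilde f_k$ is convex, so the pointwise limit $\tilde f$ is convex. Next, by Lemma~\ref{lemmorconj}, $e_1 f_k^*(x)=\tfrac12\|x\|^2-\tilde f_k(x)$ is the Moreau envelope of $f_k^*\in\Gamma_0(\R^n)$, hence also convex; passing to the pointwise limit, $\tfrac12\|\cdot\|^2-\tilde f$ is convex as well. So the candidate $\tilde f$ is finite, continuous, convex, and has a convex ``complement'' in the sense that $\tfrac12\|\cdot\|^2-\tilde f$ is convex too.

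The decisive step, which I expect to be the main obstacle, is to turn these two convexity conditions into an explicit $f\in\Gamma_0(\R^n)$ with $\tilde f=e_1 f$. My plan is to argue that the convexity of both $\tilde f$ and $\tfrac12\|\cdot\|^2-\tilde f$ forces $\tilde f$ to be $C^{1,1}$ with a $1$-Lipschitz gradient (standard subdifferential calculus, since $\tilde f + (\tfrac12\|\cdot\|^2-\tilde f)$ is $C^1$); Baillon--Haddad (cf.~\cite{convmono}) then makes $\nabla\tilde f$ firmly nonexpansive, and consequently $T:=\Id-\nabla\tilde f$ is firmly nonexpansive as well. Moreau's identification of firmly nonexpansive self-maps of $\R^n$ with proximal mappings of functions in $\Gamma_0(\R^n)$ (again in \cite{convmono}) then produces $f\in\Gamma_0(\R^n)$ with $\Prox_f^1=T$. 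Combined with the relation $\nabla e_1 f=\Id-\Prox_f^1$ from the paper's standing fact on Moreau envelopes, this gives $\nabla e_1 f=\nabla\tilde f$ on the connected set $\R^n$, so $e_1 f$ and $\tilde f$ differ only by an additive constant; absorbing that constant into $f$ keeps $f\in\Gamma_0(\R^n)$ and yields $\tilde f=e_1 f\in e_1(\Gamma_0(\R^n))$, completing the proof.
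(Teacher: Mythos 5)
Your argument is correct in substance but follows a genuinely different route from the paper's. The paper never characterizes the range of the Moreau envelope operator directly: it passes the pointwise limit $g$ of $e_1f_k$ through the conjugacy/epi-convergence machinery of Rockafellar--Wets (Theorems 7.17, 11.34 and 7.37), concluding that $(e_1f_k)^*=f_k^*+\tfrac12\|\cdot\|^2$ epi-converges, hence $f_k^*$ epi-converges to some $h^*\in\Gamma_0(\R^n)$, hence $f_k\overset{e}\to h$ and $g=e_1h$ by Fact~\ref{fact:epi}. You instead prove (implicitly) the intrinsic description $e_1(\Gamma_0(\R^n))=\{\tilde f:\R^n\to\R \mid \tilde f \text{ convex},\ \tfrac12\|\cdot\|^2-\tilde f \text{ convex}\}$ and observe that both convexity conditions survive pointwise limits; the paper's Lemma~\ref{lemmorconj} supplies the convexity of $\tfrac12\|\cdot\|^2-e_1f_k=e_1f_k^*$, and the sum rule $\partial\tilde f+\partial(\tfrac12\|\cdot\|^2-\tilde f)=\{\Id\}$ plus Baillon--Haddad recovers an $f$ with $e_1f=\tilde f$. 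Your approach is more self-contained and elementary (no appeal to Attouch's theorem on epi-convergence of conjugates), and it yields the closedness of $e_1(\Gamma_0(\R^n))$ as a transparent convexity statement; the paper's route buys the additional information that $f_k$ itself epi-converges to $h$, which is reused elsewhere (e.g.\ in item C of Proposition~\ref{prop1}).

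One citation in your decisive step needs repair: it is \emph{not} true that every firmly nonexpansive self-map of $\R^n$ is a proximal mapping --- by Minty's theorem such maps are exactly the resolvents of maximally monotone operators, and only the resolvents of subdifferentials (equivalently, of cyclically monotone operators) are proximal mappings. Moreau's actual characterization requires $T$ to be nonexpansive \emph{and} the gradient of a convex function. Fortunately your $T=\Id-\nabla\tilde f=\nabla(\tfrac12\|\cdot\|^2-\tilde f)$ is manifestly the gradient of the convex function $\tfrac12\|\cdot\|^2-\tilde f$, so the correct form of the theorem (which is exactly what the paper itself invokes in the proof of Lemma~\ref{l:strongchar1}, \ref{strong3}$\Rightarrow$\ref{strong2}) applies verbatim and your proof goes through once the hypothesis is stated in full.
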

\begin{proof} Let $\{f_k\}_{k=1}^\infty\subseteq\Gamma_0(\R^n),$ $f_k\rightarrow h.$ Then $e_1f_k\overset{p}\rightarrow g$ for some function $g.$ Our objective is to prove that $g$ is in fact the Moreau envelope of a proper, lsc, convex function. Since $f_k\in\Gamma_0(\R^n)$ for each $k,$ by Theorem \ref{thm1} $e_1f_k\in\Gamma_0(\R^n)$ for each $k.$ Then by \cite[Theorem 7.17]{rockwets}, we have that $e_1f_k\overset{e}\rightarrow g,$ and $e_1f_k\overset{u}\rightarrow g$ on bounded sets. Since $e_1f_k$ is convex and full-domain for each $k,$ $g$ is also convex and full-domain. By \cite[Theorem 11.34]{rockwets}, we have that $(e_1f_k)^*\overset{e}\rightarrow g^*,$ that is, $f_k^*+\frac{1}{2}\|\cdot\|^2\overset{e}\rightarrow g^*.$ Defining $h^*:=g^*-\frac{1}{2}\|\cdot\|^2,$ we have $f_k^*\overset{e}\rightarrow h^*\in\Gamma_0(\R^n).$ Then applying \cite[Theorem 11.34]{rockwets} again, we obtain $f_k\overset{e}\rightarrow h.$ Finally, using \cite[Theorem 7.37]{rockwets} we see that $e_1f_k\overset{e}\rightarrow e_1h,$ and we conclude that $g=e_1h\in\Gamma_0(\R^n).$ By Fact \ref{fact:epi}, we have pointwise and uniform convergence as well. Therefore, $e_1(\Gamma_0(\R^n))$ is closed under pointwise convergence topology.
\end{proof}

On $\Gamma_{0}(\R^n),$ we will use:
\begin{df}[Attouch-Wets metric]\label{defd}
For $f,g\in\Gamma_0(\R^n),$ define the distance function $d:$
$$d(f,g):=\sum\limits_{i=1}^\infty\frac{1}{2^i}\frac{\|e_1f-e_1g\|_i}{1+\|e_1f-e_1g\|_i}.$$
\end{df}

 In order to prove completeness of the space, we state the following lemma, whose simple proof is omitted.
\begin{lem}\label{lem6}
Define $a:[0,\infty)\rightarrow\R,$ $a(t):=\frac{t}{1+t}.$ Then
\begin{itemize}
\item[a)] $a$ is an increasing function, and
\item[b)] $t_1,t_2\geq0$ implies that $a(t_1+t_2)\leq a(t_1)+a(t_2).$
\end{itemize}
\end{lem}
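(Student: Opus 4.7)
The plan is to dispatch both parts by elementary calculus-free manipulations, since Lemma~\ref{lem6} is a routine property of the standard bounded-remetrization function $a(t)=t/(1+t)$.

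For part (a), I would rewrite
\[
a(t)=\frac{t}{1+t}=1-\frac{1}{1+t}.
\]
Since $1+t$ is strictly increasing on $[0,\infty)$, the reciprocal $1/(1+t)$ is strictly decreasing, and hence $a$ is strictly increasing. (If one prefers a direct check: for $0\le s<t$, the difference $a(t)-a(s)=(t-s)/[(1+s)(1+t)]>0$.)

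For part (b), given $t_1,t_2\ge 0$, I would use the fact that $1+t_i\le 1+t_1+t_2$ for each $i\in\{1,2\}$, so that
\[
\frac{t_1}{1+t_1}+\frac{t_2}{1+t_2}\;\ge\;\frac{t_1}{1+t_1+t_2}+\frac{t_2}{1+t_1+t_2}\;=\;\frac{t_1+t_2}{1+t_1+t_2}\;=\;a(t_1+t_2).
\]
This chain gives exactly the claimed subadditivity, completing the proof.

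There is essentially no obstacle here; the only thing to be careful about is that the two displayed inequalities in part (b) require $t_1,t_2\ge 0$ so that the denominators are positive and the numerators are nonnegative, which is precisely the standing hypothesis of the lemma. Both facts together are what is needed to verify the triangle inequality for the Attouch--Wets metric $d$ in Definition~\ref{defd} when combined with the pointwise estimate $\|e_1f-e_1h\|_i\le\|e_1f-e_1g\|_i+\|e_1g-e_1h\|_i$.
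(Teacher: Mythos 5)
Your proof is correct: the rewriting $a(t)=1-\tfrac{1}{1+t}$ gives monotonicity, and the denominator bound $1+t_i\le 1+t_1+t_2$ gives subadditivity, both valid under the hypothesis $t_1,t_2\ge 0$. The paper explicitly omits the proof of this lemma as ``simple,'' and your argument is exactly the routine verification the authors had in mind, so there is nothing to compare beyond noting that you have correctly filled in the omitted details.
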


\begin{prop}\label{prop1}
The space $(\Gamma_0(\R^n),d)$ where $d$ is the metric defined in Definition \ref{defd}, is a complete metric space.
\end{prop}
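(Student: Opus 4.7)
The plan is to show that $d$ is indeed a metric and then transfer completeness from $(e_1(\Gamma_0(\R^n)),\tilde d)$, which was established in Theorem \ref{thm2}, via the obvious isometry $f\mapsto e_1 f$.

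First I would verify the metric axioms. Finiteness is immediate since each summand is bounded by $1/2^i$. Symmetry and non-negativity are clear. For the separation axiom, $d(f,g)=0$ forces $\|e_1 f - e_1 g\|_i = 0$ for every $i\in\N$, hence $e_1 f = e_1 g$ pointwise on $\R^n$. Taking Fenchel conjugates and using the identity $(e_1 f)^* = f^* + \tfrac12\|\cdot\|^2$ (already invoked in the proof of Lemma \ref{lem3}) yields $f^* = g^*$, and biconjugation on $\Gamma_0(\R^n)$ gives $f = f^{**} = g^{**} = g$. For the triangle inequality, the supremum norm over each closed ball $\B_i[0]$ satisfies $\|e_1 f - e_1 h\|_i \leq \|e_1 f - e_1 g\|_i + \|e_1 g - e_1 h\|_i$; applying monotonicity and subadditivity of $t\mapsto t/(1+t)$ from Lemma \ref{lem6} term-by-term, and then summing the weights $1/2^i$, gives $d(f,h)\leq d(f,g)+d(g,h)$.

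Next I would observe that by definition $d(f,g)=\tilde d(e_1 f, e_1 g)$, so the map $\Phi\colon(\Gamma_0(\R^n),d)\to(e_1(\Gamma_0(\R^n)),\tilde d)$, $\Phi(f):=e_1 f$, is an isometry. It is surjective by the very definition of the target set, and injective by the separation argument above; hence $\Phi$ is a bijective isometry. Completeness then follows by a routine transfer: given a Cauchy sequence $\{f_k\}$ in $(\Gamma_0(\R^n),d)$, the sequence $\{e_1 f_k\}$ is Cauchy in $(e_1(\Gamma_0(\R^n)),\tilde d)$, so by Theorem \ref{thm2} it converges to some $\tilde g \in e_1(\Gamma_0(\R^n))$. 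Writing $\tilde g = e_1 h$ for the unique $h\in\Gamma_0(\R^n)$ supplied by injectivity of $\Phi$, we obtain $d(f_k,h)=\tilde d(e_1 f_k, e_1 h)\to 0$.

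I do not anticipate a substantive obstacle here: Theorem \ref{thm2} already carries the analytic weight (closedness of $e_1(\Gamma_0(\R^n))$ under pointwise/uniform-on-bounded-sets convergence), and the remaining work is a bookkeeping verification that $\Phi$ is a bijective isometry. The one subtlety worth flagging explicitly in the write-up is the injectivity of $f\mapsto e_1 f$ on $\Gamma_0(\R^n)$, since without it the isometry argument collapses and $d$ fails to separate points.
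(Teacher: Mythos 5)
Your proposal is correct and follows essentially the same route as the paper: verify the metric axioms (finiteness, separation, symmetry, triangle inequality via Lemma \ref{lem6}) and then reduce completeness to Theorem \ref{thm2} through the correspondence $f\mapsto e_1f$, which is exactly what the paper's item C does by repeating the arguments of Theorem \ref{thm2} on the pointwise limit of $\{e_1f_k\}$. The only cosmetic difference is that you establish injectivity of $f\mapsto e_1f$ directly via $(e_1f)^*=f^*+\tfrac12\|\cdot\|^2$ and biconjugation, whereas the paper cites \cite[Corollary 3.36]{rockwets}; both are fine.
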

\begin{proof} Items M1-M4 show that $(\Gamma_0(\R^n),d)$ is a metric space, and item C shows that it is complete.\\
M1: Since
$$\sum\limits_{i=1}^\infty\frac{1}{2^i}=1,\mbox{ and }0\leq\frac{\|e_1f-e_1g\|_i}{1+\|e_1f-e_1g\|_i}<1\mbox{ for all }i,$$
we have that
$$\frac{1}{2^i}\geq\frac{1}{2^i}\frac{\|e_1f-e_1g\|_i}{1+\|e_1f-e_1g\|_i}\mbox{ for all }i.$$
Then
$$0\leq d(f,g)\leq1\mbox{ for all }f,g\in\Gamma_0(\R^n).$$
Hence, $d$ is real-valued, finite, and non-negative.\\
M2: We have
\begin{align*}
d(f,g)=0&\Leftrightarrow\sum\limits_{i=1}^\infty\frac{1}{2^i}\frac{\|e_1f-e_1g\|_i}{1+\|e_1f-e_1g\|_i}=0,\\
&\Leftrightarrow\|e_1f-e_1g\|_i=0\mbox{ for all }i,\\
&\Leftrightarrow e_1f(x)-e_1g(x)=0\mbox{ for all }x,\\
&\Leftrightarrow e_1f=e_1g,\\
&\Leftrightarrow f=g\mbox{ \cite[Corollary 3.36]{rockwets}.}
\end{align*}
Hence, $d(f,g)=0$ if and only if $f=g.$\\
M3: The fact that $d(f,g)=d(g,f)$ is trivial.\\
M4: By the triangle inequality,
$$\|e_1f-e_1g\|_i\leq\|e_1f-e_1h\|_i+\|e_1h-e_1g\|_i\mbox{ for all }f,g,h\in\Gamma_0(\R^n).$$
By applying Lemma \ref{lem6} (a), we have
$$\frac{\|e_1f-e_1g\|_i}{1+\|e_1f-e_1g\|_i}\leq\frac{\|e_1f-e_1h\|_i+\|e_1h-e_1g\|_i}{1+\|e_1f-e_1h\|_i+\|e_1h-e_1g\|_i}.$$
Then we apply Lemma \ref{lem6} (b) with $t_1=\|e_1f-e_1h\|_i$ and $t_2=\|e_1h-e_1g\|_i,$ and we have
$$\frac{\|e_1f-e_1g\|_i}{1+\|e_1f-e_1g\|_i}\leq\frac{\|e_1f-e_1h\|_i}{1+\|e_1f-e_1h\|_i}+\frac{\|e_1h-e_1g\|_i}{1+\|e_1h-e_1g\|_i}.$$
Multiplying both sides by $\frac{1}{2^i}$ and taking the summation over $i,$ we obtain the distance functions, which yields $d(f,g)\leq d(f,h)+d(h,g)$ for all $f,g,h\in\Gamma_0(\R^n).$
\item[C:] Let $\{f_k\}_{k=1}^\infty$ be a Cauchy sequence in $(\Gamma_0(\R^n),d),$ with $f_k\rightarrow h.$ Then for each $\varepsilon>0$ there exists $N_\varepsilon\in\N$ such that $d(f_j,f_k)<\varepsilon$ for all $j,k\geq N_\varepsilon.$ Fix $\varepsilon>0.$ Then there exists $N\in\N$ such that
$$\sum\limits_{i=1}^\infty\frac{1}{2^i}\frac{\|e_1f_j-e_1f_k\|_i}{1+\|e_1f_j-e_1f_k\|_i}<\varepsilon\mbox{ for all }j,k\geq N.$$
Then for any $i\in\N$ fixed, we have $\frac{\|e_1f_j-e_1f_k\|_i}{1+\|e_1f_j-e_1f_k\|_i}<2^{i}\varepsilon,$ so that $\|e_1f_j-e_1f_k\|_i<\frac{2^i\varepsilon}{1-2^i\varepsilon}=:\hat{\varepsilon}>0,$ for all $j,k\geq N.$ Notice that $\hat{\varepsilon}\searrow0$ as $\varepsilon\searrow0.$ This gives us that $\{e_1f_k\}_{k=1}^\infty$ is a Cauchy sequence on $B_i(x)$ for each $i\in\N,$ so that $e_1f_k\overset{p}\rightarrow g$ for some function $g.$
By the same arguments as in the proof of Theorem \ref{thm2}, we know that $g=e_1h\in\Gamma_0(\R^n),$ and hence $h\in\Gamma_0(\R^n).$ Therefore, $(\Gamma_0(\R^n),d)$ is closed, and is a complete metric space.
\end{proof}

On the set of Fenchel conjugates
$$(\Gamma_0(\R^n))^*:=\{f^*:\ f\in\Gamma_{0}(\R^n)\}$$ define a metric by
$\hat{d}(f,g):=d(f,g)$. Observe that $\Gamma_{0}(\R^n)=(\Gamma_{0}(\R^n))^*$.
\begin{cor}\label{thmmorconj}
Consider two metric spaces
$(\Gamma_0(\R^n),d)$ and $((\Gamma_0(\R^n))^*,\hat{d})$.
Define $$T:(\Gamma_0(\R^n),d)\rightarrow ((\Gamma_0(\R^n))^*,\hat{d}):
f\mapsto f^*.$$
 Then $T$ is a bijective isometry. Consequently,
$(\Gamma_0(\R^n),d)$ and $((\Gamma_0(\R^n))^*,\hat{d})$ are isometric.
\end{cor}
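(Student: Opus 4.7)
The plan is to reduce everything to the Moreau--conjugate identity from Lemma~\ref{lemmorconj}, namely $e_1 f + e_1 f^* = \tfrac{1}{2}\|\cdot\|^2$, which forces the two envelopes to be ``reflections'' of each other across $\tfrac{1}{2}\|\cdot\|^2$. This symmetry should make the distance invariant under the conjugation map, and bijectivity will come almost for free from the Fenchel--Moreau theorem.

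First I would verify that $T$ is well defined and bijective. Since $f\in\Gamma_0(\R^n)$ implies $f^*\in\Gamma_0(\R^n)$, $T$ maps into $(\Gamma_0(\R^n))^*$, and surjectivity is built into the definition $(\Gamma_0(\R^n))^*=\{f^*:f\in\Gamma_0(\R^n)\}$. For injectivity, if $f^*=g^*$ then taking conjugates again and invoking the Fenchel--Moreau theorem (i.e.\ $f^{**}=f$ for $f\in\Gamma_0(\R^n)$) gives $f=g$. This step is routine.

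Next I would establish the isometry. Apply Lemma~\ref{lemmorconj} to both $f$ and $g$ to obtain, for every $x\in\R^n$,
\begin{equation*}
e_1 f^*(x)-e_1 g^*(x)=\bigl(\tfrac{1}{2}\|x\|^2-e_1 f(x)\bigr)-\bigl(\tfrac{1}{2}\|x\|^2-e_1 g(x)\bigr)=e_1 g(x)-e_1 f(x).
\end{equation*}
Taking absolute values yields $|e_1 f^*(x)-e_1 g^*(x)|=|e_1 f(x)-e_1 g(x)|$ pointwise, so the seminorms agree: $\|e_1 f^*-e_1 g^*\|_i=\|e_1 f-e_1 g\|_i$ for each $i\in\N$. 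Plugging this equality into the series defining $d$ gives $d(f^*,g^*)=d(f,g)$, i.e.\ $\hat d(Tf,Tg)=d(f,g)$, which is precisely the isometry condition. Combining bijectivity with the isometry yields the conclusion that $(\Gamma_0(\R^n),d)$ and $((\Gamma_0(\R^n))^*,\hat d)$ are isometric.

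I do not anticipate a significant obstacle here; the content of the corollary is essentially a direct consequence of Lemma~\ref{lemmorconj}. The only point that requires a moment of care is ensuring that the identity $e_1 f^*=\tfrac{1}{2}\|\cdot\|^2-e_1 f$ holds on all of $\R^n$ (so that the $\sup_{\|x\|\le i}$ in the definition of $\|\cdot\|_i$ transfers cleanly); but since $e_1 f$ and $e_1 f^*$ are finite-valued and continuous on $\R^n$ for every $f\in\Gamma_0(\R^n)$, this is immediate.
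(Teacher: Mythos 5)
Your proposal is correct and follows essentially the same route as the paper: bijectivity via the Fenchel--Moreau theorem, and the isometry by substituting the identity $e_1 f^* = \tfrac{1}{2}\|\cdot\|^2 - e_1 f$ from Lemma~\ref{lemmorconj} into the definition of $d$ so that the seminorms $\|\cdot\|_i$ are unchanged. No gaps.
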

\begin{proof} Clearly $T$ is onto. Also, $T$ is injective because of the Fenchel-Moreau Theorem \cite[Theorem 13.32]{convmono}
 or \cite[Corollary 12.2.1]{convanalrock}. To see this,
let $Tf=Tg$. Then $f^*=g^*$, so $f=(f^*)^*=(g^*)^*=g$.
It remains to show that $T$ is an isometry: $(\forall f, g \in \Gamma_0(\R^n))$ $d(f,g)=d(f^*,g^*)=\hat{d}(Tf, Tg).$
Lemma \ref{lemmorconj} states that $e_1f+e_1f^*=\frac{1}{2}\|\cdot\|^2.$ Using this, we have
\begin{align*}
d(f^*,g^*)&=\sum\limits_{i=1}^\infty\frac{1}{2^i}\frac{\sup\limits_{\|x\|\leq i}|e_1f^*(x)-e_1g^*(x)|}{1+\sup\limits_{\|x\|\leq i}|e_1f^*(x)-e_1g^*(x)|}\\
&=\sum\limits_{i=1}^\infty\frac{1}{2^i}\frac{\sup\limits_{\|x\|\leq i}\left|\frac{1}{2}\|x\|^2-e_1f(x)-\frac{1}{2}\|x\|^2+e_1g(x)\right|}{1+\sup\limits_{\|x\|\leq i}\left|\frac{1}{2}\|x\|^2-e_1f(x)-\frac{1}{2}\|x\|^2+e_1g(x)\right|}\\
&=\sum\limits_{i=1}^\infty\frac{1}{2^i}\frac{\sup\limits_{\|x\|\leq i}|e_1g(x)-e_1f(x)|}{1+\sup\limits_{\|x\|\leq i}|e_1g(x)-e_1f(x)|}\\
&=d(f,g).
\end{align*}
\end{proof}

By Theorem~\ref{thm2}, $(e_{1}(\Gamma_{0}(\R^n)),\tilde{d})$ is a complete metric space.

\begin{cor}\label{c:convex:moreau} Consider two metric spaces $(\Gamma_{0}(\R^n), d)$ and $(e_{1}(\Gamma_{0}(\R^n)),\tilde{d})$.
Define $$T:\Gamma_{0}(\R^n)\rightarrow e_{1}(\Gamma_{0}(\R^n)):
f\mapsto e_{1}f.$$
 Then $T$ is a bijective isometry, so $(\Gamma_{0}(\R^n), d)$ and $(e_{1}(\Gamma_{0}(\R^n)),\tilde{d})$
are isometric.
\end{cor}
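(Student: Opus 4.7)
The plan is to verify the three properties (surjectivity, injectivity, isometry) essentially directly from the definitions. Since $T$ has been set up specifically so that $d$ on $\Gamma_0(\R^n)$ is the pullback of $\tilde{d}$ on $e_1(\Gamma_0(\R^n))$, the isometry property should be immediate; the main content is injectivity, which rests on a result already invoked earlier in the paper.

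First I would observe that $T$ is surjective by the very definition of the codomain: $e_1(\Gamma_0(\R^n))=\{e_1 f:f\in\Gamma_0(\R^n)\}$, so every $\tilde{f}\in e_1(\Gamma_0(\R^n))$ is the image under $T$ of some $f\in\Gamma_0(\R^n)$.

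Next I would establish injectivity. Suppose $Tf=Tg$, i.e., $e_1 f=e_1 g$ on $\R^n$. Then in particular $\|e_1 f - e_1 g\|_i=0$ for every $i\in\N$, which by \cite[Corollary 3.36]{rockwets} (the same reference invoked in step M2 of the proof of Proposition \ref{prop1}, which establishes that the Moreau envelope uniquely determines a function in $\Gamma_0(\R^n)$) forces $f=g$. Hence $T$ is injective.

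For the isometry property, I would simply compare the two distance formulas. By Definition \ref{defd},
\[
d(f,g)=\sum_{i=1}^\infty\frac{1}{2^i}\frac{\|e_1 f-e_1 g\|_i}{1+\|e_1 f-e_1 g\|_i},
\]
while by (\ref{e:m:envelope}),
\[
\tilde{d}(Tf,Tg)=\tilde{d}(e_1 f,e_1 g)=\sum_{i=1}^\infty\frac{1}{2^i}\frac{\|e_1 f-e_1 g\|_i}{1+\|e_1 f-e_1 g\|_i},
\]
so $d(f,g)=\tilde{d}(Tf,Tg)$ for all $f,g\in\Gamma_0(\R^n)$. Combined with bijectivity, this shows $T$ is a bijective isometry, and therefore the two metric spaces are isometric. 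There is no real obstacle in this proof—it is a bookkeeping verification; the only nontrivial input (the fact that $f\mapsto e_1 f$ is one-to-one on $\Gamma_0(\R^n)$) has already been used in establishing that $d$ is a metric, so no new machinery is required.
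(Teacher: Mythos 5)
Your proof is correct and matches the paper's (implicit) reasoning: the paper in fact omits a proof of this corollary entirely, since $d$ is defined precisely as the pullback of $\tilde{d}$ under $f\mapsto e_1f$, surjectivity is built into the definition of $e_1(\Gamma_0(\R^n))$, and injectivity is the same appeal to \cite[Corollary 3.36]{rockwets} already made in step M2 of Proposition \ref{prop1}. Your write-up simply makes these three bookkeeping steps explicit.
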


\section{Baire category results}\label{sec:main}

 This section is devoted to the main work of this paper. Ultimately, we show that the set of strongly convex functions is a meagre (Baire category one) set, whiel the set of convex functions that attain a strong minimum is a generic (Baire category two) set.

\subsection{Characterizations of the strong minimizer}

The first proposition describes the relationship between a function and its Moreau envelope, pertaining to the strong minimum. Several more results regarding strong minima follow.

\begin{prop}\label{thm5}
Let $f:\R^n\rightarrow\RX.$ Then $f$ attains a strong minimum at $\bar{x}$ if and only if $e_1f$ attains a strong minimum at $\bar{x}.$
\end{prop}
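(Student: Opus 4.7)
The plan is to prove both directions via the identities $\inf f=\inf e_1 f$ (Fact~\ref{fact1}) together with the variational definition of the Moreau envelope, exploiting the fact that an element of $\Prox^1_f(x)$ exists whenever $f$ is proper, lsc and bounded below. The key observation is that if $y_n\in\Prox^1_f(x_n)$ then
\[
e_1 f(x_n)=f(y_n)+\tfrac{1}{2}\|y_n-x_n\|^2,
\]
so if $e_1 f(x_n)\to\inf e_1 f=\inf f$, both summands on the right must go to their respective infima: $f(y_n)\to\inf f$ and $\|y_n-x_n\|\to 0$. This lets us shuttle the strong-minimum property between $f$ and $e_1 f$.

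For the forward direction, assume $f$ attains a strong minimum at $\bar{x}$. First, $e_1 f(\bar{x})\le f(\bar{x})=\inf f=\inf e_1 f$ and $e_1 f(\bar{x})\ge\inf e_1 f$ together give $e_1 f(\bar{x})=\inf e_1 f$, so $\bar{x}$ minimizes $e_1 f$. Next, if $e_1 f(x_n)\to e_1 f(\bar{x})$, pick $y_n\in\Prox^1_f(x_n)$; by the observation above, $f(y_n)\to f(\bar{x})$ and $\|y_n-x_n\|\to 0$. The strong minimum of $f$ forces $y_n\to\bar{x}$, hence $x_n\to\bar{x}$.

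For the converse, assume $e_1 f$ attains a strong minimum at $\bar{x}$. To get $f(\bar{x})=\inf f$, take any minimizing sequence $x_n$ for $f$; then $\inf f\le e_1 f(x_n)\le f(x_n)\to\inf f=\inf e_1 f=e_1 f(\bar{x})$, so by the strong minimum of $e_1 f$, $x_n\to\bar{x}$, and lower semicontinuity of $f$ yields $f(\bar{x})\le\liminf f(x_n)=\inf f$. For the sequential criterion, if $f(x_n)\to f(\bar{x})=\inf f$, the same squeeze $\inf e_1 f\le e_1 f(x_n)\le f(x_n)\to\inf f=\inf e_1 f$ gives $e_1 f(x_n)\to e_1 f(\bar{x})$, whence $x_n\to\bar{x}$.

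The only real obstacle is the forward direction's sequential implication, which is why the $\Prox^1_f(x_n)$ trick is central: without splitting $e_1 f(x_n)$ into its two summands one cannot directly relate convergence of the envelope values at $x_n$ to convergence of $f$-values at points close to $\bar{x}$. Every other step is a short manipulation of the identity $\inf f=\inf e_1 f$ together with lower semicontinuity.
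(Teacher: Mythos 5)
Your proof is correct and follows essentially the same route as the paper: the forward direction uses the identical decomposition $e_1f(x_n)=f(y_n)+\tfrac{1}{2}\|y_n-x_n\|^2$ with $y_n\in\Prox_f^1(x_n)$ to force $f(y_n)\to\inf f$ and $y_n-x_n\to 0$, and the converse uses the same squeeze $e_1f(\bar{x})\leq e_1f(x_n)\leq f(x_n)$. If anything, your converse is slightly more careful than the paper's, since you explicitly verify $f(\bar{x})=\inf f$ via a minimizing sequence and lower semicontinuity, a step the paper leaves implicit in the inequality $f(\bar{x})\leq e_1f(\bar{x})$.
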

\begin{proof} $(\Rightarrow)$ Assume that $f$ attains a strong minimum at $\bar{x}.$ Then $$\min\limits_xf(x)=\min\limits_xe_1f(x)=f(\bar{x})=e_1f(\bar{x}).$$ Let $\{x_k\}$ be such that $e_1f(x_k)\rightarrow e_1f(\bar{x}).$ We need to show that $x_k\rightarrow\bar{x}.$ Since $$e_1f(x_k)=f(v_k)+\frac{1}{2}\|v_k-x_k\|^2$$ for some $v_k,$ and $f(v_k)\geq f(\bar{x}),$ we have
\begin{equation}\label{vk}
0\leq\frac{1}{2}\|x_k-v_k\|^2+f(v_k)-f(\bar{x})=e_1f(x_k)-e_1f(\bar{x})\rightarrow0.
\end{equation}
Since both $\frac{1}{2}\|x_k-v_k\|^2\geq0$ and $f(v_k)-f(\bar{x})\geq0,$ equation \eqref{vk} tells us that $x_k-v_{k}\rightarrow 0$ and $f(v_k)\rightarrow f(\bar{x}).$ Since $\bar{x}$ is the strong minimizer of $f,$ we have $v_k\rightarrow\bar{x}.$ Therefore, $x_k\rightarrow\bar{x},$ and $e_1f$ attains a strong minimum at $\bar{x}.$\\
$(\Leftarrow)$ Assume that $e_1f$ attains a strong minimum at $\bar{x},$ $e_1f(\bar{x})=\min e_1f.$ Then $e_1f(x_k)\rightarrow e_1f(\bar{x})$ implies that $x_k\rightarrow\bar{x}.$ Let $f(x_k)\rightarrow f(\bar{x}).$ We have
$$f(\bar{x})\leq e_1f(\bar{x})\leq e_1f(x_k)\leq f(x_k).$$
Since $f(x_k)\rightarrow f(\bar{x}),$ we obtain
$$e_1f(x_k)\rightarrow f(\bar{x})=e_1f(\bar{x}).$$
Therefore, $x_k\rightarrow\bar{x},$ and $f$ attains a strong minimum at $\bar{x}.$
\end{proof}

\begin{thm}\label{thm7}
Let $f:\R^n\rightarrow\overline{\R}$ have a strong minimizer $\bar{x}.$ Then for all $m\in\N,$
$$\inf\limits_{\|x-\bar{x}\|\geq\frac{1}{m}}f(x)>f(\bar{x}).$$
\end{thm}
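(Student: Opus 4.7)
The plan is to proceed by contradiction, exploiting directly the defining property of a strong minimum. Since $\bar{x}$ is a (global) minimizer, for every $m\in\N$ we automatically have the inequality $\inf_{\|x-\bar{x}\|\geq 1/m} f(x)\geq f(\bar{x})$, so the only way the desired strict inequality can fail is if equality holds for some $m$.

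First I would fix such an $m$ and assume toward contradiction that $\inf_{\|x-\bar{x}\|\geq 1/m} f(x) = f(\bar{x})$. By the definition of infimum, this yields a sequence $\{x_k\}\subseteq \R^n$ with $\|x_k-\bar{x}\|\geq 1/m$ for every $k$ and $f(x_k)\to f(\bar{x})$. Next, I would invoke the strong minimum property of $f$ at $\bar{x}$: since $f(x_k)\to f(\bar{x})$, we must have $x_k\to\bar{x}$. But this is incompatible with the bound $\|x_k-\bar{x}\|\geq 1/m>0$ holding along the entire sequence, giving the contradiction.

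There is essentially no technical obstacle here; the argument is a direct unpacking of the definition. The only subtle point worth noting is that the infimum over the closed set $\{x:\|x-\bar{x}\|\geq 1/m\}$ need not be attained, which is why a minimizing sequence rather than a minimizer must be used. Everything else is routine.
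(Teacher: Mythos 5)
Your proof is correct and is essentially identical to the paper's: both argue by contradiction, extract a minimizing sequence from the assumed equality of the infimum with $f(\bar{x})$, and use the strong-minimum property to force $x_k\to\bar{x}$, contradicting $\|x_k-\bar{x}\|\geq\tfrac{1}{m}$. Your additional remarks (that the inequality $\geq$ is automatic and that the infimum need not be attained) are correct but only make explicit what the paper leaves implicit.
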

\begin{proof} Suppose that there exists $m\in\N$ such that $\inf\limits_{\|x-\bar{x}\|\geq\frac{1}{m}}f(x)=f(\bar{x}).$ Then there exists a sequence $\{x_k\}_{k=1}^\infty$ with $\|x_k-\bar{x}\|\geq\frac{1}{m}$ and $\lim\limits_{k\rightarrow\infty}f(x_k)=f(\bar{x}).$ Since $\bar{x}$ is the strong minimizer of $f,$
we have $x_k\rightarrow\bar{x},$ a contradiction.
\end{proof}
\begin{cor}\label{cor2}
Let $f:\R^n\rightarrow\overline{\R}$ have a strong minimizer $\bar{x}.$ Then for all $m\in\N,$
$$\inf\limits_{\|x-\bar{x}\|\geq\frac{1}{m}}e_1f(x)>e_1f(\bar{x}).$$
\end{cor}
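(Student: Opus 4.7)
The plan is to observe that this corollary is essentially immediate from the two results just proved. First I would invoke Proposition~\ref{thm5}: since $f$ has a strong minimizer at $\bar{x}$, the Moreau envelope $e_1 f$ also attains a strong minimum at the same point $\bar{x}$. In particular, $e_1 f : \R^n \to \overline{\R}$ is a function satisfying the hypothesis of Theorem~\ref{thm7}.

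Next I would apply Theorem~\ref{thm7} with $e_1 f$ in place of $f$. The theorem was stated for an arbitrary extended real-valued function with a strong minimizer, and $e_1 f$ (being finite-valued and continuous on $\R^n$) certainly qualifies. The conclusion of Theorem~\ref{thm7} applied to $e_1 f$ reads
\[
\inf_{\|x-\bar{x}\|\geq \tfrac{1}{m}} e_1 f(x) > e_1 f(\bar{x}) \quad \text{for all } m \in \N,
\]
which is exactly the assertion of the corollary. There is no real obstacle here; the only point worth noting is that Theorem~\ref{thm7} is formulated for extended-real-valued functions, so it applies verbatim to $e_1 f$, and the transfer of the strong minimizer property from $f$ to $e_1 f$ is handled entirely by Proposition~\ref{thm5}. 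Thus the proof is a one-line chain: Proposition~\ref{thm5} $\Rightarrow$ Theorem~\ref{thm7} applied to $e_1 f$ $\Rightarrow$ the desired inequality.
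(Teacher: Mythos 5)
Your proposal is correct and matches the paper's proof, which likewise combines Proposition~\ref{thm5} with the argument of Theorem~\ref{thm7} applied to $e_1f$. The only cosmetic difference is that you cite Theorem~\ref{thm7} directly for $e_1f$ while the paper says to repeat its proof with $f$ replaced by $e_1f$; these are the same argument.
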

\begin{proof} Applying Proposition \ref{thm5}, the proof is the same as that of Theorem \ref{thm7} replacing $f$ with $e_1f.$
\end{proof}

The next result describes a distinguished property of convex functions.
\begin{thm}\label{thm7.5}
Let $f\in\Gamma_0(\R^n).$ Then $f$ has a strong minimizer if and only if $f$ has a unique minimizer.
\end{thm}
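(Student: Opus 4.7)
The forward direction, that a strong minimizer is automatically the unique minimizer, is immediate from the definition: if $\bar{x}$ is a strong minimizer and $\tilde{x}$ is any other minimizer, then the constant sequence $x_k \equiv \tilde{x}$ satisfies $f(x_k)\to f(\bar{x})$, forcing $\tilde{x} = \bar{x}$. So the substantive content is the converse, and that is where I would focus the proof.

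My plan for the converse is to assume $f \in \Gamma_0(\R^n)$ has unique minimizer $\bar{x}$ and show that every sequence $\{x_k\}$ with $f(x_k) \to f(\bar{x})$ converges to $\bar{x}$. The argument splits into two stages. First, I would show that any such minimizing sequence must be bounded, via a convexity-plus-scaling contradiction. Suppose toward a contradiction that $\|x_k - \bar{x}\| \to \infty$ along a subsequence. Setting $\lambda_k := 1/\|x_k - \bar{x}\|$ and $y_k := \lambda_k x_k + (1-\lambda_k)\bar{x}$, the point $y_k$ lies on the unit sphere centered at $\bar{x}$, and convexity yields
$$f(\bar{x}) \le f(y_k) \le \lambda_k f(x_k) + (1-\lambda_k)f(\bar{x}) \to f(\bar{x}).$$
So $\{y_k\}$ is itself a minimizing sequence confined to a compact set; Bolzano-Weierstrass produces a limit point $y^*$ with $\|y^* - \bar{x}\|=1$, and lower semicontinuity forces $f(y^*) = f(\bar{x})$, contradicting uniqueness of the minimizer.

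Once boundedness is in hand, the second stage is routine. Any subsequence of $\{x_k\}$ has a further convergent sub-subsequence whose limit must equal $\bar{x}$ by lsc plus uniqueness, and the standard sub-subsequence principle then upgrades this to convergence of the full sequence to $\bar{x}$. I expect the boundedness step to be the only real obstacle, because without convexity a function with a unique minimizer can easily have unbounded minimizing sequences; the scaling trick is the natural way to convert an unbounded minimizing sequence into a second minimizer on the unit sphere around $\bar{x}$. An alternative route would be to apply Proposition~\ref{thm5} and argue on the continuously differentiable function $e_1 f$ instead, but since $e_1 f$ inherits both the unique minimizer and the convexity of $f$, the same direct argument applies to it without any real simplification, so I would keep the proof on $f$.
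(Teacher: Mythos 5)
Your proof is correct, and it reaches the conclusion by a genuinely different route than the paper. The paper's proof of the converse goes through coercivity: it invokes Rockafellar's result that all sublevel sets of a function in $\Gamma_0(\R^n)$ share the same recession cone, notes that the recession cone of $\{x: f(x)\le f(\bar{x})\}=\{\bar{x}\}$ is trivial, concludes $\liminf_{\|x\|\to\infty} f(x)/\|x\|>0$ and hence that $f$ is coercive, and only then deduces that minimizing sequences are bounded and converge to $\bar{x}$. You instead prove boundedness of minimizing sequences directly with the scaling trick: an unbounded minimizing sequence is pushed onto the unit sphere around $\bar{x}$ by convexity, and lower semicontinuity plus compactness of the sphere manufactures a second minimizer, contradicting uniqueness. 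Your argument is self-contained and elementary --- it is, in effect, an unpacking of the recession-cone fact in the one special case needed here --- and the details (finiteness of $f(\bar{x})$ by properness, $\lambda_k f(x_k)\to 0$, the squeeze giving $f(y_k)\to f(\bar{x})$, and the sub-subsequence principle at the end) all check out. What the paper's route buys in exchange for citing heavier machinery is the intermediate conclusion that $f$ is actually coercive, a property that plays an independent role later in the paper (the genericity of coercive functions); your route establishes only what the theorem statement requires.
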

\begin{proof} $(\Rightarrow)$ By definition, if $f$ has a strong minimizer, then that minimizer is unique.\\
$(\Leftarrow)$ Suppose $f$ has a unique minimizer $\bar{x}.$ Because
$f\in\Gamma_{0}(\R^n)$, by \cite[Theorem 8.7]{convanalrock},
all level-sets $\{x:f(x)\leq\alpha\},$ for any $\alpha\geq f(\bar{x}),$ have the same recession cone.
Since the recession cone of $\{x:f(x)\leq f(\bar{x})\}=\{\bar{x}\}$ is 0, \cite[Proposition 1.1.5]{convanal} gives us that
$$\liminf\limits_{\|x\|\rightarrow\infty}\frac{f(x)}{\|x\|}>0.$$
This, coupled with the fact that $f$ is convex, gives us that $f$ is coercive. Since $f$ is coercive and has a unique minimizer, we have that $\bar{x}$ is in fact a strong minimizer.
\end{proof}

\begin{ex}
The above property can fail when the function is nonconvex. Consider the continuous but nonconvex function $f:\R\rightarrow\R,$ $f(x)=\frac{x^2}{(x^4+1)}.$
\begin{figure}[H]
\begin{center}\includegraphics[scale=0.5]{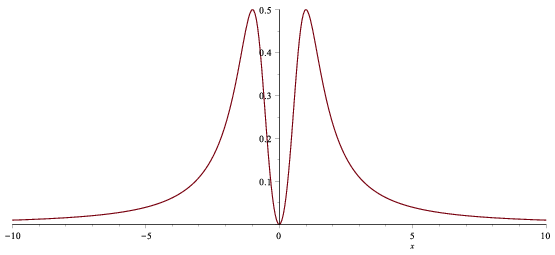}\end{center}
\end{figure}
\noindent The function has a unique minimizer $\bar{x}=0,$ but the minimizer is not strong, as any sequence $\{x^k\}$ that tends to $\pm\infty$ gives a sequence of function values that tends to $f(\bar{x}).$
\end{ex}

Using Theorem~\ref{thm7} and Corollary~\ref{cor2}, we can now single out two sets in $\Gamma_{0}(\R^n)$
which are very important for our later proofs.

\begin{df}
For any $m\in\N,$ define the sets $U_m$ and $E_m$ as follows:
\begin{align*}
U_m&:=\left\{f\in\Gamma_0(\R^n):\mbox{ there exists }z\in\R^n\mbox{ such that }\inf\limits_{\|x-z\|\geq\frac{1}{m}}f(x)-f(z)>0\right\},\\
E_m&:=\left\{f\in\Gamma_0(\R^n):\mbox{ there exists }z\in\R^n\mbox{ such that }\inf\limits_{\|x-z\|\geq\frac{1}{m}}e_1f(x)-e_1f(z)>0\right\}.
\end{align*}
\end{df}

\begin{prop}\label{thm6}
Let $f\in\bigcap\limits_{m\in\N}U_m.$ Then $f$ attains a strong minimum on $\R^n.$
\end{prop}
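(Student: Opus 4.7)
The plan is to extract from the hypothesis a Cauchy sequence $\{z_m\}$ whose limit is a strong minimizer of $f$. For each $m\in\N$, the assumption $f\in U_m$ supplies $z_m\in\R^n$ such that $f(x)>f(z_m)$ whenever $\|x-z_m\|\ge 1/m$.

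The first step is to show that $\{z_m\}_{m\in\N}$ is Cauchy. Fix indices $m_1<m_2$, so $1/m_1>1/m_2$, and suppose for contradiction that $\|z_{m_1}-z_{m_2}\|\ge 1/m_1$. Applying the defining inequality at index $m_1$ to $x=z_{m_2}$ gives $f(z_{m_2})>f(z_{m_1})$; since $\|z_{m_1}-z_{m_2}\|\ge 1/m_1\ge 1/m_2$, applying the defining inequality at index $m_2$ to $x=z_{m_1}$ gives $f(z_{m_1})>f(z_{m_2})$, a contradiction. Hence $\|z_{m_1}-z_{m_2}\|<1/m_1$ for all $m_2>m_1$, which makes $\{z_m\}$ Cauchy in $\R^n$. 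Let $\bar x:=\lim_{m\to\infty}z_m$.

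The next step is to identify $\bar x$ as a minimizer. Fix any $x\ne\bar x$. Since $z_m\to\bar x$ and $1/m\to 0$, for all sufficiently large $m$ we have $\|x-z_m\|\ge 1/m$, hence $f(x)>f(z_m)$. Lower semicontinuity of $f$ at $\bar x$ yields $f(\bar x)\le\liminf_m f(z_m)\le f(x)$; together with the trivial inequality at $x=\bar x$, this proves $f(\bar x)=\min_{\R^n}f$.

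The final step is to upgrade a minimizer to a strong minimizer. If $y\ne\bar x$ were a second minimizer, then the eventual inequality $f(y)>f(z_m)$ from the previous paragraph, combined with $f(z_m)\ge f(\bar x)=f(y)$, would give $f(y)>f(y)$, a contradiction. Thus $\bar x$ is the unique minimizer of $f\in\Gamma_0(\R^n)$, and Theorem~\ref{thm7.5} then delivers that $\bar x$ is a strong minimizer. The main obstacle is the Cauchy step: the clever point is to pit the thresholds $1/m_1$ and $1/m_2$ against each other to force the $z_m$ to cluster, without which none of the subsequent lsc or uniqueness arguments have a candidate limit to work with.
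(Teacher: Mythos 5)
Your proof is correct. The Cauchy-sequence argument (playing the thresholds $1/m_1$ and $1/m_2$ against each other) and the identification of $\bar x$ as a minimizer via lower semicontinuity are exactly the paper's steps. Where you diverge is the final upgrade from minimizer to \emph{strong} minimizer: the paper argues directly, in the style of the variational principle it cites --- given a minimizing sequence $\{y_k\}$ with $\|y_k-\bar x\|\ge\varepsilon$, it picks $p$ with $\|y_k-z_p\|\ge 1/p$ and uses the strict gap $f(z_p)<\inf_{\|x-z_p\|\ge 1/p}f(x)\le f(y_k)$ to keep $f(y_k)$ bounded away from $f(\bar x)$. You instead prove that $\bar x$ is the \emph{unique} minimizer and invoke Theorem~\ref{thm7.5}. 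This is legitimate (Theorem~\ref{thm7.5} precedes this proposition and $U_m\subseteq\Gamma_0(\R^n)$ by definition, so convexity is available and there is no circularity), and it is slightly shorter. The trade-off is that your route imports convexity and finite-dimensionality: Theorem~\ref{thm7.5} rests on recession cones, coercivity, and Bolzano--Weierstrass, whereas the paper's direct argument never uses convexity and would carry over verbatim to arbitrary proper lsc functions on a complete metric/Banach space setting. The paper's version is therefore the more robust and self-contained one, but yours is a valid shortcut in the stated setting.
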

\begin{proof} The proof follows the method of \cite[Theorem II.1]{smoothvarprincip}. Since $f\in\bigcap\limits_{m\in\N}U_m,$ we have that for each $m\in\N$ there exists $z_m\in\R^n$ such that
$$f(z_m)<\inf\limits_{\|x-z_m\|\geq\frac{1}{m}}f(x).$$
Suppose that $\|z_p-z_m\|\geq\frac{1}{m}$ for some $p>m.$  By the definition of $z_m,$ we have
\begin{equation}\label{zm}
f(z_p)>f(z_m).
\end{equation}
Since $\|z_m-z_p\|\geq\frac{1}{m}>\frac{1}{p},$ we have
$$f(z_m)>f(z_p)$$
by the definition of $z_p.$ This contradicts equation \eqref{zm}. Thus, $\|z_p-z_m\|<\frac{1}{m}$ for each $p>m.$ This gives us that $\{z_m\}_{m=1}^\infty$ is a Cauchy sequence that converges to some $\bar{x}\in\R^n.$ It remains to be shown that $\bar{x}$ is the strong minimizer of $f.$ Since $f$ is lsc, we have
\begin{align*}
f(\bar{x})&\leq\liminf_{m\rightarrow\infty} f(z_m)\\
&\leq\liminf_{m\rightarrow\infty}
\left(\inf\limits_{\|x-z_m\|\geq\frac{1}{m}}f(x)\right)\\
&\leq\inf\limits_{x\in\R^n\setminus\{\bar{x}\}}f(x).
\end{align*}
Let $\{y_k\}_{k=1}^\infty\subseteq\R^n$ be such that $f(y_k)\rightarrow f(\bar{x}),$ and suppose that $y_k\not\rightarrow\bar{x}.$ Dropping to a subsequence if necessary, there exists $\varepsilon>0$ such that $\|y_k-\bar{x}\|\geq\varepsilon$ for all $k.$ Thus, there exists $p\in\N$ such that $\|y_k-z_p\|\geq\frac{1}{p}$ for all $k\in\N.$ Hence,
$$f(\bar{x})\leq f(z_p)<\inf\limits_{\|x-z_p\|\geq\frac{1}{p}}f(x)\leq f(y_k)$$
for all $k\in\N,$ a contradiction to the fact that $f(y_k)\rightarrow f(\bar{x}).$ Therefore, $\bar{x}$ is the strong minimizer of $f.$
\end{proof}
\begin{thm}\label{cor1}
Let $f\in\bigcap\limits_{m\in\N}E_m.$ Then $e_1f$ attains a strong minimum on $\R^n,$ so $f$ attains a strong minimum on $\R^n.$
\end{thm}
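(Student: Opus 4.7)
The plan is to reduce this theorem to Proposition \ref{thm6} applied to the Moreau envelope $e_1 f$, followed by a single application of Proposition \ref{thm5}. The essential observation is that the condition defining $E_m$ for $f$ is literally the condition defining $U_m$ for the function $e_1 f$. That is, setting $g := e_1 f$, the inequality $\inf_{\|x-z\|\geq 1/m} e_1 f(x) - e_1 f(z) > 0$ becomes $\inf_{\|x-z\|\geq 1/m} g(x) - g(z) > 0$, which is exactly what is required for $g \in U_m$.

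First I would verify that $e_1 f \in \Gamma_0(\R^n)$, so that Proposition \ref{thm6} is applicable to it. This is immediate: for $f \in \Gamma_0(\R^n)$, the Moreau envelope $e_1 f$ is a real-valued, continuously differentiable convex function on $\R^n$ (by the fact quoted right after the definition of the Moreau envelope), hence proper, lsc, and convex. With this in hand, the hypothesis $f \in \bigcap_{m\in\N} E_m$ translates directly to $e_1 f \in \bigcap_{m\in\N} U_m$.

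Next I would invoke Proposition \ref{thm6} on the function $e_1 f$. This yields that $e_1 f$ attains a strong minimum at some point $\bar{x} \in \R^n$. Finally, applying the $(\Leftarrow)$ direction of Proposition \ref{thm5} to $f$, the fact that $e_1 f$ attains a strong minimum at $\bar{x}$ forces $f$ itself to attain a strong minimum at $\bar{x}$. This completes the proof.

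There is no real obstacle here: the result is essentially a bookkeeping corollary that packages Proposition \ref{thm6} (the analogue for the sets $U_m$) together with the envelope/original-function equivalence from Proposition \ref{thm5}. The only point to be careful about is confirming that $e_1 f$ belongs to $\Gamma_0(\R^n)$ so that Proposition \ref{thm6} can be legitimately applied to it rather than to $f$ directly; this is where the envelope sets $E_m$ differ from the sets $U_m$ and why both propositions are needed in tandem.
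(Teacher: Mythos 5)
Your proposal is correct and matches the paper's own argument: the paper likewise applies Proposition \ref{thm6} to $e_1f$ (using that the $E_m$ condition for $f$ is exactly the $U_m$ condition for $e_1f$) and then invokes Proposition \ref{thm5} to transfer the strong minimum back to $f$. Your explicit check that $e_1f\in\Gamma_0(\R^n)$ is a sensible point of care that the paper leaves implicit.
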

\begin{proof} Applying Proposition \ref{thm6}, for each $f\in\bigcap\limits_{m\in\N}E_m$, $e_{1}f$ has
a strong minimizer on $\R^n.$ Then
Proposition \ref{thm5} gives us that each corresponding $f$ has the same corresponding strong minimizer.
\end{proof}

\subsection{The set of strongly convex functions is dense, but of the first category}

Next, we turn our attention to the set of strongly convex functions. The objectives here are to show that the set is contained in both $U_m$ and $E_m,$ dense in $(\Gamma_0(\R^n),d),$ and meagre in $(\Gamma_0(\R^n),d).$
\begin{thm}\label{thm8}
Let $f:\R^n\rightarrow\overline{\R}$ be strongly convex. Then $f\in U_m$ and $f\in E_m$ for all $m\in\N.$
\end{thm}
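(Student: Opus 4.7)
The plan is to use the strong minimizer of $f$ as the witness point $z$ in both defining conditions. Since $f$ is strongly convex, Lemma~\ref{lem5} guarantees that $f$ attains a strong minimum at a (unique) point $\bar{x}\in\R^n$. This $\bar{x}$ is the natural candidate for $z$ in both the definition of $U_m$ and that of $E_m$.

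For the $U_m$ membership, I would invoke Theorem~\ref{thm7} applied to $f$ at its strong minimizer $\bar{x}$: for every $m\in\N$,
$$\inf_{\|x-\bar{x}\|\geq\frac{1}{m}} f(x) \;>\; f(\bar{x}).$$
Taking $z=\bar{x}$ in the definition of $U_m$ then yields $f\in U_m$ for every $m$.

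For the $E_m$ membership, I would transfer the strong minimum property from $f$ to $e_1 f$ via Proposition~\ref{thm5}, which asserts that $e_1 f$ attains a strong minimum at the same $\bar{x}$. Then Corollary~\ref{cor2} gives
$$\inf_{\|x-\bar{x}\|\geq\frac{1}{m}} e_1 f(x) \;>\; e_1 f(\bar{x})$$
for every $m\in\N$, so choosing $z=\bar{x}$ again shows $f\in E_m$.

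Nothing here is genuinely hard: every ingredient has already been proved in the preceding subsections, and the main content of the statement is just identifying the right witness. The only slight subtlety is the passage from $f$ to $e_1 f$ needed for the $E_m$ part, but that is handled in one line by Proposition~\ref{thm5}. (An equally valid alternative is to apply Lemma~\ref{lem3} to obtain strong convexity of $e_1 f$, then repeat the $U_m$ argument with $e_1 f$ in place of $f$; this yields $f\in E_m$ without appealing to Proposition~\ref{thm5}, at the cost of producing a possibly different witness for $E_m$ than for $U_m$.)
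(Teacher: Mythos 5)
Your proposal is correct and follows essentially the same route as the paper: both use Lemma~\ref{lem5} to obtain the strong minimizer as the witness $z$ and Theorem~\ref{thm7} to get the $U_m$ condition. For the $E_m$ part the paper takes your parenthetical alternative (Lemma~\ref{lem3} plus a repeat of the $U_m$ argument for $e_1f$) rather than Proposition~\ref{thm5}/Corollary~\ref{cor2}, but the two transfers are interchangeable one-liners, so this is not a substantive difference.
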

\begin{proof} Since $f$ is strongly convex, $f$ has a unique minimizer $z.$ By Lemma \ref{lem5}, $z$ is a strong minimizer, so that for any sequence $\{x_k\}$ such that $f(x_k)\rightarrow f(\bar{x}),$ we must have $x_k\rightarrow\bar{x}.$ We want to show that
\begin{equation}\label{geqm}
\inf\limits_{\|x-z\|\geq\frac{1}{m}}f(x)-f(z)>0.
\end{equation}
For any $m\in\N,$ equation \eqref{geqm} is true by Theorem~\ref{thm7}.
 Therefore, $f\in U_m$ for all $m\in\N.$ By Lemma \ref{lem3}, $e_1f$ is strongly convex. Therefore, by the same reasoning as above, $f\in E_m$ for all $m\in\N.$
\end{proof}

We will need the following characterizations of strongly convex functions in later proofs. Note that
\ref{strong1}$\Rightarrow$\ref{strong3} has been done by Rockafellar \cite{monops}.
\begin{lem}\label{l:strongchar1}
 Let $f\in \Gamma_{0}(\R^n)$.
The following are equivalent:
\begin{enumerate}
\item\label{strong1} $f$ is strongly convex.
\item \label{strong2} $\Prox_{f}^{1}=k\Prox_{g}^{1}$ for some $0\leq k<1$ and $g\in\Gamma_{0}(\R^n)$.
\item\label{strong3} $\Prox_{f}^{1}=k N$ for some $0\leq k<1$ and $N:\R^n\rightarrow \R^n$ nonexpansive.
\end{enumerate}
\end{lem}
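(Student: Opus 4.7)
The plan is to establish the cycle \ref{strong1} $\Rightarrow$ \ref{strong2} $\Rightarrow$ \ref{strong3} $\Rightarrow$ \ref{strong1}. The middle implication is essentially free: one takes $N := \Prox_{g}^{1}$, which is (firmly) nonexpansive because $g\in\Gamma_{0}(\R^n)$, so $\Prox_{f}^{1} = kN$ with the same $k$.

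For \ref{strong3} $\Rightarrow$ \ref{strong1}, I would pass to the smooth side via the Moreau envelope. Since $\Prox_{f}^{1} = kN$ with $N$ nonexpansive, $\Prox_{f}^{1}$ is $k$-Lipschitz, so using $\nabla e_{1}f = \Id - \Prox_{f}^{1}$ and the Cauchy--Schwarz inequality yields
$$\langle \nabla e_{1}f(x) - \nabla e_{1}f(y),\, x-y\rangle \;\geq\; (1-k)\|x-y\|^{2}$$
for all $x,y\in\R^n$. Thus $\nabla e_{1}f - (1-k)\Id$ is monotone, which (as $e_{1}f$ is convex and $C^{1}$) is equivalent to $e_{1}f - \tfrac{1-k}{2}\|\cdot\|^{2}$ being convex. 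Hence $e_{1}f$ is strongly convex with modulus $1-k>0$, and Lemma~\ref{lem3} transfers strong convexity back to $f$.

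For \ref{strong1} $\Rightarrow$ \ref{strong2}, the idea is to absorb the quadratic term of a strongly convex $f$ into a scale-rescaled copy of its convex residual. Write $f = g_{0} + \tfrac{\sigma}{2}\|\cdot\|^{2}$ with $g_{0}\in\Gamma_{0}(\R^n)$, and set $k := 1/(1+\sigma)\in(0,1)$. Completing the square inside the definition of $\Prox_{f}^{1}$ gives
$$\Prox_{f}^{1}(x) = \argmin_{y}\Bigl\{ g_{0}(y) + \tfrac{1+\sigma}{2}\bigl\|y - \tfrac{x}{1+\sigma}\bigr\|^{2}\Bigr\}.$$
The candidate $g(v) := (1+\sigma)\, g_{0}\!\bigl(v/(1+\sigma)\bigr)$ lies in $\Gamma_{0}(\R^n)$, and the substitution $u = y/(1+\sigma)$ inside $\Prox_{g}^{1}$ identifies $\Prox_{g}^{1}(x) = (1+\sigma)\Prox_{f}^{1}(x)$, i.e.\ $\Prox_{f}^{1} = k\Prox_{g}^{1}$.

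The main obstacle is the construction in \ref{strong1} $\Rightarrow$ \ref{strong2}: the correct $g$ is not obvious and requires simultaneously dilating the argument by $1+\sigma$ and rescaling the function value by $1+\sigma$, a pairing that is not transparent from the statement alone. Everything else is routine—Cauchy--Schwarz, the monotone-gradient characterization of strong convexity for smooth functions, and Lemma~\ref{lem3}.
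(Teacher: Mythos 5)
Your proposal is correct, and for two of the three implications it coincides with the paper: your \ref{strong1}$\Rightarrow$\ref{strong2} uses exactly the same rescaled function $g(v)=(1+\sigma)g_{0}(v/(1+\sigma))$ that the paper calls $\tilde{g}$ (you verify the identity $\Prox_{g}^{1}=(1+\sigma)\Prox_{f}^{1}$ by completing the square in the argmin, the paper by resolvent algebra on $((1+\sigma)\Id+\partial g_0)^{-1}$ --- same construction, different bookkeeping), and \ref{strong2}$\Rightarrow$\ref{strong3} is the same one-liner. Where you genuinely diverge is in closing the equivalence. The paper proves \ref{strong3}$\Rightarrow$\ref{strong2} by writing $N=\nabla(e_{1}(f^{*})/k)$ and invoking the Baillon--Haddad theorem to recognize $N$ as a proximal mapping, and then separately proves \ref{strong2}$\Rightarrow$\ref{strong1} by showing $\partial f=(1/k-1)\Id+\partial g\circ(\Id/k)$ is strongly monotone. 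You instead prove \ref{strong3}$\Rightarrow$\ref{strong1} directly: $\Prox_{f}^{1}=kN$ is $k$-Lipschitz, so Cauchy--Schwarz gives $\langle\nabla e_{1}f(x)-\nabla e_{1}f(y),x-y\rangle\geq(1-k)\|x-y\|^{2}$, hence $e_{1}f$ is $(1-k)$-strongly convex and Lemma~\ref{lem3} transfers this to $f$. Your route is more elementary --- it avoids Baillon--Haddad entirely, using only the gradient formula $\nabla e_{1}f=\Id-\Prox_{f}^{1}$ and the monotone-gradient criterion for convexity of a $C^{1}$ function on $\R^n$ --- and the cycle structure is logically sufficient. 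What the paper's argument buys in exchange is constructive content: it exhibits the function $g$ in \ref{strong2} explicitly from the data of \ref{strong3}, whereas your cycle recovers \ref{strong2} from \ref{strong3} only by detouring through \ref{strong1}. Both are sound; the only mild caveat in yours is to note (as you implicitly do) that the case $k=0$ is harmless, since then $\nabla e_{1}f=\Id$ is $1$-strongly monotone.
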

\begin{proof}
\ref{strong1}$\Rightarrow$\ref{strong2}: Assume that $f$ is strongly convex. Then
$f=g+\sigma q$ where $g\in\Gamma_{0}(\R^n)$, $q=\tfrac{1}{2}\|\cdot\|^2$, and $\sigma>0$. We have
\begin{align}
\Prox_{f}^{1} &=((1+\sigma)\Id+\partial g)^{-1}=\bigg((1+\sigma)\big(\Id+\frac{\partial g}{1+\sigma}\big)\bigg)^{-1}\\
&=\bigg(\Id+\frac{\partial g}{1+\sigma}\bigg)^{-1}\bigg(\frac{\Id}{1+\sigma}\bigg).
\end{align}
Define $\tilde{g}(x)=(1+\sigma)g(x/(1+\sigma))$. Then $\tilde{g}\in\Gamma_{0}(\R^n)$,
$\partial\tilde{g}=\partial g\circ \big(\frac{\Id}{1+\sigma}\big)$, so
\begin{align}
\Prox_{\tilde{g}}^{1} & =\bigg(\Id+\partial g\circ \bigg(\frac{\Id}{1+\sigma}\bigg)\bigg)^{-1}
=\bigg((1+\sigma)\bigg(\Id+\frac{\partial g}{1+\sigma}\bigg)\circ\bigg(\frac{\Id}{1+\sigma}\bigg)\bigg)^{-1}\\
&=(1+\sigma)\bigg(1+\frac{\partial g}{1+\sigma}\bigg)^{-1}\circ\bigg(\frac{\Id}{1+\sigma}\bigg)\\
&=(1+\sigma)\Prox_{f}^{1}.
\end{align}
Therefore, $\Prox_{f}^{1}=\tfrac{1}{1+\sigma}\Prox_{\tilde{g}}^{1}$.

\ref{strong2}$\Rightarrow$\ref{strong1}: Assume $\Prox_{f}^{1}=k\Prox_{g}^{1}$ for some $0\leq k<1$ and $g\in\Gamma_{0}(\R^n)$.
If $k=0$, then $f=\iota_{\{0\}}$, and $f$ is obviously strongly convex. Let us assume $0<k<1$.
The assumption
$(\Id+\partial f)^{-1}=k(\Id+\partial g)^{-1}$ gives
$\Id +\partial f=(\Id+\partial g)\circ (\Id/k)=\Id/k+\partial g \circ (\Id/k)$, so
$$\partial f=(1/k-1)\Id+\partial g(\Id/k).$$
Since $1/k>1$ and $\partial g\circ(\Id/k)$ is monotone, we have that $\partial f$ is strongly monotone,
which implies that $f$ is strongly convex.

\ref{strong2}$\Rightarrow$\ref{strong3}: This is clear because $\Prox_{g}^{1}$ is nonexpansive,
see, e.g., \cite[Proposition 12.27]{convmono}.

\ref{strong3}$\Rightarrow$\ref{strong2}: Assume $\Prox_{f}^{1}=kN$ where $0\leq k<1$ and $N$ is nonexpansive.
If $k=0$, then $\Prox_{f}^{1}=0=0\cdot 0$, so \ref{strong2} holds because $\Prox_{\iota_{\{0\}}}=0$.
If $0<k<1$, then
$N=1/k \Prox_{f}^{1}.$
As $$\Prox_{f}^{1}=(\Id+\partial f)^{-1}=\nabla (q+f)^{*}=\nabla e_{1}(f^*),$$
we have $N=\nabla (e_{1}(f^*)/k)$. This means that $N$ is nonexpansive and the gradient of a differentiable
convex function. By the Baillon-Haddad theorem \cite{baillonhaddad} or \cite[Corollary 18.16]{convmono},
$N=\Prox_{g}^{1}$ for some $g\in\Gamma_{0}(\R^n)$.
Therefore, $\Prox_{f}^{1}=k \Prox_{g}^{1}$, i.e., \ref{strong2} holds true.
\end{proof}

\begin{thm}\label{thm9}
The set of strongly convex functions is dense in $(\Gamma_0(\R^n),d).$ Equivalently,
the set of strongly convex functions is dense in $(e_{1}(\Gamma_{0}(\R^n)),\tilde{d})$.
\end{thm}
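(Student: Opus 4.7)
The plan is to exhibit, for any $f\in\Gamma_{0}(\R^n)$ and any target tolerance, an explicit strongly convex function nearby in the metric $d$. The obvious candidate is the Tikhonov-type perturbation
$$f_{\sigma}:=f+\frac{\sigma}{2}\|\cdot\|^{2}\qquad(\sigma>0).$$
Each $f_{\sigma}$ lies in $\Gamma_{0}(\R^n)$ (same proper domain as $f$, and a sum of two lsc convex functions) and is strongly convex by the very definition of strong convexity with modulus $\sigma$. So it suffices to show $d(f_{\sigma},f)\to 0$ as $\sigma\downarrow 0$.

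By the construction of $d$ in Definition \ref{defd}, $d(f_{\sigma},f)\to 0$ is equivalent to $e_{1}f_{\sigma}\to e_{1}f$ uniformly on bounded subsets of $\R^{n}$, which by Fact \ref{fact:epi} is in turn equivalent to pointwise convergence $e_{1}f_{\sigma}\overset{p}{\to}e_{1}f$. I will verify the pointwise statement directly. Fix $x\in\R^{n}$. Since $f_{\sigma}\geq f$ pointwise, one immediately has $e_{1}f_{\sigma}(x)\geq e_{1}f(x)$. For the reverse estimate, given $\varepsilon>0$ choose $y_{\varepsilon}\in\R^{n}$ with
$$f(y_{\varepsilon})+\tfrac{1}{2}\|y_{\varepsilon}-x\|^{2}\leq e_{1}f(x)+\varepsilon.$$
Plugging $y_{\varepsilon}$ into the infimum defining $e_{1}f_{\sigma}(x)$ yields
$$e_{1}f_{\sigma}(x)\leq f(y_{\varepsilon})+\tfrac{\sigma}{2}\|y_{\varepsilon}\|^{2}+\tfrac{1}{2}\|y_{\varepsilon}-x\|^{2}\leq e_{1}f(x)+\varepsilon+\tfrac{\sigma}{2}\|y_{\varepsilon}\|^{2}.$$
Letting $\sigma\downarrow 0$ with $y_{\varepsilon}$ fixed gives $\limsup_{\sigma\downarrow 0}e_{1}f_{\sigma}(x)\leq e_{1}f(x)+\varepsilon$; then sending $\varepsilon\downarrow 0$ produces the desired pointwise limit. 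Consequently $d(f_{\sigma},f)\to 0$, so every $\varepsilon$-ball around $f$ contains some $f_{\sigma}$, which establishes density in $(\Gamma_{0}(\R^n),d)$.

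For the equivalent statement in $(e_{1}(\Gamma_{0}(\R^n)),\tilde{d})$, I would simply invoke Corollary \ref{c:convex:moreau}: the map $T:f\mapsto e_{1}f$ is a bijective isometry between the two complete metric spaces, and by Lemma \ref{lem3} it sends the subset of strongly convex functions onto itself. An isometry transports dense subsets to dense subsets, so the two density assertions are equivalent. No serious obstacle stands in the way of this argument; the only subtle point is the $\tfrac{\sigma}{2}\|y_{\varepsilon}\|^{2}$ remainder in the upper bound, which one has to be careful to drive to zero by sending $\sigma\downarrow 0$ \emph{after} fixing $y_{\varepsilon}$ (since $y_{\varepsilon}$ itself may depend on $x$ and $\varepsilon$ in an uncontrolled way, but is a fixed finite vector once chosen).
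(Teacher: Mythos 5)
Your proposal is correct, but it takes a genuinely different route from the paper. You perturb $f$ additively by $\tfrac{\sigma}{2}\|\cdot\|^{2}$ (Tikhonov regularization), which is strongly convex by definition, and then verify $e_{1}f_{\sigma}\overset{p}{\to}e_{1}f$ by a two-sided estimate on the infimal convolution, upgrading pointwise to uniform-on-bounded-sets convergence via Fact~\ref{fact:epi}; your handling of the remainder $\tfrac{\sigma}{2}\|y_{\varepsilon}\|^{2}$ (fix $y_{\varepsilon}$ first, then send $\sigma\downarrow 0$, then $\varepsilon\downarrow 0$) is exactly right, and the isometry argument for the second assertion matches the paper's. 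The paper instead perturbs $f$ at the level of its proximal mapping, defining $g$ by $\Prox_{g}^{1}=(1-\sigma)\Prox_{f}^{1}$ (the proximal average of $f$ with $\iota_{\{0\}}$), invoking Lemma~\ref{l:strongchar1} to certify strong convexity of $g$, shifting by a constant so that $e_{1}h(0)=e_{1}f(0)$, and then using the Mean Value Theorem together with nonexpansiveness of $\Prox_{f}^{1}$ to get the explicit uniform bound $\|e_{1}h-e_{1}f\|_{N}\leq\sigma N(N+\|\Prox_{f}^{1}(0)\|)$, from which the metric estimate is assembled by hand. What each approach buys: yours is shorter and more elementary, requiring no proximal-average machinery and no characterization of strong convexity via proximal mappings, but it leans on the nontrivial Fact~\ref{fact:epi} to pass from pointwise to locally uniform convergence (alternatively, you could take $y=\Prox_{f}^{1}(x)$ and use nonexpansiveness to obtain the quantitative bound $|e_{1}f_{\sigma}(x)-e_{1}f(x)|\leq\tfrac{\sigma}{2}(\|x\|+\|\Prox_{f}^{1}(0)\|)^{2}$ directly); the paper's construction yields an explicit $O(\sigma)$ rate on each ball and stays within the proximal-mapping framework that Lemma~\ref{l:strongchar1} and Theorem~\ref{strongconvmeagre} also exploit.
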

\begin{proof} Let $0<\varepsilon<1$ and $f\in\Gamma_0(\R^n).$ It will suffice to find $h\in\Gamma_0(\R^n)$ such that $h$ is strongly convex and $d(h,f)<\varepsilon.$ For $0<\sigma<1,$ define $g\in\Gamma_0(\R^n)$ by way of the proximal mapping:
$$\Prox_g^1:=(1-\sigma)\Prox_f^1=(1-\sigma)\Prox_{f}^{1}+\sigma \Prox_{\iota_{\{0\}}}.$$
Such a $g\in\Gamma_{0}(\R^n)$ does exists because $g$ is the proximal average of $f$ and $\iota_{\{0\}}$ by \cite{proxbas}, and
$g$ is strongly convex because of Lemma~\ref{l:strongchar1}.
Define $h\in\Gamma_{0}(\R^n)$ by
$$h:=g-e_1g(0)+e_1f(0).$$
Then $e_1h=e_1g-e_1g(0)+e_1f(0),$ so that
\begin{equation}\label{eq1}
e_1h(0)=e_1f(0),
\end{equation}
and $\Prox_h^1=\Prox_g^1.$ Fix $N$ large enough that $\sum\limits_{i=N}^\infty\frac{1}{2^i}<\frac{\varepsilon}{2}.$ Then
\begin{equation}\label{eq2}
\sum\limits_{i=N}^\infty\frac{1}{2^i}\frac{\|e_1f-e_1g\|_i}{1+\|e_1f-e_1g\|_i}\leq\sum\limits_{i=N}^\infty\frac{1}{2^i}<\frac{\varepsilon}{2}.
\end{equation}
Choose $\sigma$ such that
\begin{equation}\label{eq3}
0<\sigma<\frac{\varepsilon}{2-\varepsilon}\frac{1}{N(N+\|\Prox_f^1(0)\|))}.
\end{equation}
This gives us that
\begin{equation}\label{eq4}
\frac{\sigma N(N+\|P-1f(0)\|)}{1+\sigma N(N+\|P-1f(0)\|)}<\frac{\varepsilon}{2}.
\end{equation}
By equation \eqref{eq1} and the Mean Value Theorem, for some $c\in[x,0]$ we have
\begin{align*}
e_1h(x)-e_1f(x)&=e_1h(x)-e_1f(x)-(e_1h(0)-e_1f(0))\\
&=\langle\nabla e_1h(c)-\nabla e_1f(c),x-0\rangle\\
&=\langle(\Id-\Prox_h^1)(c)-(\Id-\Prox_f^1)(c),x-0\rangle\\
&=\langle-\Prox_h^1(c)+\Prox_f^1(c),x-0\rangle\\
&=\langle-(1-\sigma)\Prox_f^1(c)+\Prox_f^1(c),x\rangle\\
&=\langle\sigma\Prox_f^1(c),x\rangle.
\end{align*}
Using the triangle inequality, the Cauchy-Schwarz inequality, and the fact that $\Prox_f^1$ is nonexpansive, we obtain
\begin{align*}
|e_1h(x)-e_1f(x)|&\leq\sigma\|\Prox_f^1(c)\|\|x\|\\
&=\sigma\|\Prox_f^1(c)-\Prox_f^1(0)+\Prox_f^1(0)\|\|x\|\\
&\leq\sigma(\|\Prox_f^1(c)-\Prox_f^1(0)\|+\|\Prox_f^1(0)\|)\|x\|\\
&\leq\sigma(\|c\|+\|\Prox_f^1(0)\|)\|x\|\\
&\leq\sigma(\|x\|+\|\Prox_f^1(0)\|)\|x\|\\
&\leq\sigma N(N+\|\Prox_f^1(0)\|),
\end{align*}
when $\|x\|\leq N.$ Therefore, $\|e_1h-e_1f\|_N\leq\sigma N(N+\|\Prox_f^1(0)\|).$ Applying equation \eqref{eq4}, this implies that
\begin{equation}\label{eq5}
\frac{\|e_1f-e_1g\|_N}{1+\|e_1f-e_1g\|_N}\leq\frac{\sigma N(N+\|\Prox_f^1(0)\|)}{1+\sigma N(N+\|\Prox_f^1(0)\|)}<\frac{\varepsilon}{2}.
\end{equation}
Now considering the first $N-1$ terms of our $d$ function, we have
\begin{align}
\sum\limits_{i=1}^{N-1}\frac{1}{2^i}\frac{\|e_1f-e_1g\|_i}{1+\|e_1f-e_1g\|_i}&\leq\sum\limits_{i=1}^{N-1}\frac{1}{2^i}\frac{\|e_1f-e_1g\|_N}{1+\|e_1f-e_1g\|_N}\nonumber\\
&=\frac{\|e_1f-e_1g\|_N}{1+\|e_1f-e_1g\|_N}\sum\limits_{i=1}^{N-1}\frac{1}{2^i}\nonumber\\
&<\frac{\|e_1f-e_1g\|_N}{1+\|e_1f-e_1g\|_N}.\label{eq6}
\end{align}
When equation \eqref{eq3} holds, combining equations \eqref{eq2}, \eqref{eq5}, and \eqref{eq6} yields $d(h,f)<\varepsilon.$ Hence, for any arbitrary $f\in\Gamma_0(\R^n)$ and $0<\varepsilon<1,$ there exists a strongly convex function $h\in\Gamma_0(\R^n)$ such that $d(h,f)<\varepsilon.$ That is, the set of strongly convex functions is dense in $(\Gamma_0(\R^n),d).$
Because $(\Gamma_0(\R^n),d)$ and $(e_{1}(\Gamma_{0}(\R^n)),\tilde{d})$ are isometric by Corollary~\ref{c:convex:moreau},
it suffices to apply Lemma~\ref{lem3}. The proof is complete.
\end{proof}

\begin{thm}\label{strongconvmeagre}
The set of strongly convex functions is meagre in $(e_1(\Gamma_0(\R^n)),\tilde{d})$ where $\tilde{d}$ is
given by \eqref{e:m:envelope}. Equivalently, in $(\Gamma_{0}(\R^n),d)$ the set of strongly convex
function is meagre.
\end{thm}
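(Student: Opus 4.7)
The plan is to use Corollary~\ref{c:convex:moreau} and Lemma~\ref{lem3} to reduce to proving the result in $(\Gamma_0(\R^n), d)$, then stratify the set $\mathcal{SC}$ of strongly convex functions as $\mathcal{SC} = \bigcup_{n \in \N} S_n$, where
$$S_n := \left\{f \in \Gamma_0(\R^n) : f - \tfrac{1}{2n}\|\cdot\|^2 \in \Gamma_0(\R^n)\right\},$$
and show each $S_n$ is closed with empty interior. The closedness step is short: if $d(f_k, f) \to 0$ with $f_k \in S_n$, then by Fact~\ref{fact:epi} $e_1 f_k \to e_1 f$ pointwise, so $f_k \overset{e}\rightarrow f$; adding the continuous function $-\tfrac{1}{2n}\|\cdot\|^2$ preserves epi-convergence, and epi-limits of convex functions are convex, forcing $f - \tfrac{1}{2n}\|\cdot\|^2 \in \Gamma_0(\R^n)$, i.e., $f \in S_n$.

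For empty interior I aim for the stronger statement that $\mathcal{SC}^c$ is dense in $(\Gamma_0(\R^n), d)$, which (since $S_n \subseteq \mathcal{SC}$ is closed) is enough. Fix $f \in \mathcal{SC}$ and $0 < \varepsilon < 1$, the case $f \notin \mathcal{SC}$ being trivial. Pick $M \in \N$ with $2^{-M} < \varepsilon$ and set $L := M + \|\Prox_f^1(0)\|$. Consider the Pasch--Hausdorff ($L$-Lipschitz) enlargement
$$g(x) := \inf_{y \in \R^n}[f(y) + L\|x - y\|].$$
As an infimal convolution of two $\Gamma_0$ functions with $f$ coercive (Lemma~\ref{lem4}), $g$ is proper, lsc, and convex, and a direct estimate shows $g$ is globally $L$-Lipschitz on $\R^n$; hence $g$ cannot be strongly convex (strong convexity forces at least quadratic growth), so $g \in \Gamma_0(\R^n) \setminus \mathcal{SC}$.

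The crux of the argument, and the main obstacle, is verifying the key claim $e_1 g = e_1 f$ on the closed ball of radius $M$ at the origin. Using the identity $\inf_u[L\|u - y\| + \tfrac{1}{2}\|u - x\|^2] = H_L(x - y)$, where $H_L$ denotes the Huber function
$$H_L(z) = \begin{cases}\tfrac{1}{2}\|z\|^2, & \|z\| \leq L, \\ L\|z\| - \tfrac{L^2}{2}, & \|z\| > L,\end{cases}$$
a standard interchange of infima gives $e_1 g(x) = \inf_{y \in \R^n}[f(y) + H_L(x - y)]$. For $\|x\| \leq M$, the $1$-Lipschitzness of $\nabla e_1 f = \Id - \Prox_f^1$ yields
$$\|x - \Prox_f^1(x)\| = \|\nabla e_1 f(x)\| \leq \|\nabla e_1 f(0)\| + \|x\| \leq \|\Prox_f^1(0)\| + M = L,$$
so the optimality condition $\nabla H_L(x - y^{\ast}) \in \partial f(y^{\ast})$ lands on the quadratic branch of $H_L$ and reduces to $x - y^{\ast} \in \partial f(y^{\ast})$, giving (by strict convexity of $y \mapsto f(y) + H_L(x - y)$) the unique minimizer $y^{\ast} = \Prox_f^1(x)$. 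Substituting,
$$e_1 g(x) = f(\Prox_f^1(x)) + \tfrac{1}{2}\|x - \Prox_f^1(x)\|^2 = e_1 f(x).$$
Hence $\|e_1 f - e_1 g\|_i = 0$ for every $i \leq M$, and $d(f, g) \leq \sum_{i > M} 2^{-i} = 2^{-M} < \varepsilon$. Density of $\mathcal{SC}^c$ follows; each closed $S_n$ has dense complement, hence is nowhere dense, and $\mathcal{SC} = \bigcup_n S_n$ is meagre.
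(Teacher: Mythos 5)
Your proof is correct, and its key step goes by a genuinely different route than the paper's. The stratification $\mathcal{SC}=\bigcup_n S_n$ into closed sets of functions with strong-convexity modulus at least $1/n$ is the same idea as the paper's sets $F_m$ (the paper works in the isometric space $(e_1(\Gamma_0(\R^n)),\tilde d)$ and proves closedness by a direct pointwise estimate on the convexity defect of the envelopes, while you invoke stability of epi-convergence under addition of a continuous function; both are valid). The real divergence is in showing empty interior. The paper perturbs a strongly convex $g=e_1f$ by a two-case construction: either truncate $f$ from below by $\max\{f,f(\bar x)+\tfrac1k\}$ to destroy the strong minimizer, or, when $f$ reduces to the indicator of a point, replace it by the indicator of a small ball; in each case the perturbed envelope is uniformly close to $g$ and fails to be strongly convex. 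You instead use a single uniform construction, the Pasch--Hausdorff $L$-Lipschitz regularization $g(x)=\inf_y[f(y)+L\|x-y\|]$ with $L=M+\|\Prox_f^1(0)\|$, together with the observation that $e_1g$ and $e_1f$ agree \emph{exactly} on $\B_M[0]$ (since $\|x-\Prox_f^1(x)\|=\|\nabla e_1f(x)\|\le L$ there, the Huber term sits on its quadratic branch and $\Prox_f^1(x)$ satisfies the first-order condition for the convex problem defining $e_1g(x)$), so that $d(f,g)\le 2^{-M}<\varepsilon$ with no further estimates; $g$ is globally Lipschitz, hence not strongly convex. Your argument buys a case-free construction, a perturbation that fails strong convexity for a transparent structural reason (linear growth), and exact rather than approximate agreement of envelopes on a ball; the paper's construction buys perturbations that lack even a strong minimizer, which ties into the neighbouring results of the section. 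One cosmetic remark: your index $n$ in $S_n$ collides with the dimension of $\R^n$, so a different letter would be preferable.
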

\begin{proof}
Denote the set of strongly convex functions in $e_1(\Gamma_0(\R^n))$ by $S.$ Define
$$F_m:=\left\{g\in e_1(\Gamma_0(\R^n)):g-\frac{1}{2m}\|\cdot\|^2\mbox{ is convex on }\R^n\right\}.$$
We show that
\begin{itemize}
\item[a)] $S=\bigcup\limits_{m\in\N}F_m,$
\item[b)] for each $m\in\N,$ the set $F_m$ is closed in $e_1(\Gamma_0(\R^n)),$ and
\item[c)] for each $m\in\N,$ the set $F_m$ has empty interior.
\end{itemize}
Then $S$ will have been shown to be a countable union of closed, nowhere dense sets, hence first category.
\begin{itemize}
\item[a)] $(\Rightarrow)$ Let $f\in S.$ Then there exists $\sigma>0$ such that $f-\frac{\sigma}{2}\|\cdot\|^2$ is convex. Note that this means $f-\frac{\tilde{\sigma}}{2}\|\cdot\|^2$ is convex for all $\tilde{\sigma}\in(0,\sigma).$ Since $\sigma>0,$ there exists $m\in\N$ such that $0<\frac{1}{m}<\sigma.$ Hence, $f-\frac{1}{2m}\|\cdot\|^2$ is convex, and $f\in F_{m}.$ Therefore, $S\subseteq\bigcup\limits_{m\in\N}F_m.$\\
$(\Leftarrow)$ Let $f\in F_m$ for some $m\in\N.$ Then $f-\frac{1}{2m}\|\cdot\|^2$ is convex. Thus, with $\sigma=\frac{1}{m},$ we have that there exists $\sigma>0$ such that $f-\frac{\sigma}{2}\|\cdot\|^2$ is convex, which is the definition of strong convexity of $f.$ Therefore, $F_m\subseteq S,$ and since this is true for every $m\in\N,$ we have $\bigcup\limits_{m\in\N}F_m\subseteq S.$
\item[b)] Let $g\not\in F_m.$ Then $g-\frac{1}{2m}\|\cdot\|^2$ is not convex. Equivalently, there exist $\lambda\in(0,1)$ and $x,y\in\R^n$ such that
\begin{equation}\label{gnotconvex}
\frac{g(\lambda x+(1-\lambda)y)-\lambda g(x)-(1-\lambda)g(y)}{\lambda(1-\lambda)}>-\frac{\|x-y\|^2}{2m}.
\end{equation}
Let $N>\max\{\|x\|,\|y\|\}.$ Choose $\varepsilon>0$ such that when $\tilde{d}(f,g)<\varepsilon$ for $f\in e_1(\Gamma_0(\R^n)),$ we have $\|f-g\|_N<\tilde{\varepsilon}$ for some $\tilde{\varepsilon}>0.$ In particular,\footnotesize
\begin{align*}
\frac{f(\lambda x+(1-\lambda)y)-\lambda f(x)-(1-\lambda)f(y)}{\lambda(1-\lambda)}=&\frac{g(\lambda x+(1-\lambda)y)-\lambda g(x)-(1-\lambda)g(y)}{\lambda(1-\lambda)}\\
&+\frac{(f-g)(\lambda x+(1-\lambda)y)-\lambda (f-g)(x)-(1-\lambda)(f-g)(y)}{\lambda(1-\lambda)}\\
>&\frac{g(\lambda x+(1-\lambda)y)-\lambda g(x)-(1-\lambda)g(y)}{\lambda(1-\lambda)}-\frac{4\tilde{\varepsilon}}{\lambda(1-\lambda)}.
\end{align*}\normalsize
Hence, when $\tilde{\varepsilon}$ is sufficiently small, which can be achieved by making $\varepsilon$
sufficiently small, we have
$$\frac{f(\lambda x+(1-\lambda)y)-\lambda f(x)-(1-\lambda)f(y)}{\lambda(1-\lambda)}>-\frac{\|x-y\|^2}{2m}.$$
This gives us, by equation \eqref{gnotconvex}, that $f-\frac{1}{2m}\|\cdot\|^2$ is not convex. Thus, $f\not\in F_m,$ so $e_1(\Gamma_0(\R^n))\setminus F_m$ is open, and therefore $F_m$ is closed.
\item[c)] That $\intt F_m=\emptyset$ is equivalent to saying that $e_1(\Gamma_0(\R^n))\setminus F_m$ is dense. Thus, it suffices to show that for every $\varepsilon>0$ and every $g\in e_1(\Gamma_0(\R^n)),$ the open ball $\B_\varepsilon(g)$ contains an element of $e_1(\Gamma_0(\R^n))\setminus F_m.$\\
If $g\in e_1(\Gamma_0(\R^n))\setminus F_m,$ then there is nothing to prove. Assume that $g\in F_m.$ Then $g$ is $\frac{1}{2m}$-strongly convex, and has a strong minimizer $\bar{x}$ by Lemma \ref{lem5}. As $g\in e_1(\Gamma_0(\R^n)),$ $g=e_1f$ for some $f\in\Gamma_0(\R^n).$ We consider two cases.
\begin{itemize}
\item[Case 1:] Suppose that for every $\frac{1}{k}>0,$ there exists $x_k\neq\bar{x}$ such that $f(x_k)<f(\bar{x})+\frac{1}{k}.$ Define $h_k:=\max\left\{f,f(\bar{x})+\frac{1}{k}\right\}.$ Then
$$\min h_k=f(\bar{x})+\frac{1}{k},~f\leq h_k<f+\frac{1}{k},$$
so that $e_1f\leq e_1h_k\leq e_1f+\frac{1}{k}.$ We have $g_k:=e_1h_k\in e_1(\Gamma_0(\R^n)),$ and $\|g_k-g\|_i<\frac{1}{k}$ for all $i\in\N.$ Choosing $k$ sufficiently large guarantees that $\tilde{d}(g_k,g)<\varepsilon.$
We see that $g_k$ does not have a strong minimizer by noting that for every $k,$ $f(\bar{x})<f(\bar{x})+\frac{1}{k},$ $f(x_k)<f(\bar{x})+\frac{1}{k},$ and $h_k(\bar{x})=h_k(x_k)=f(\bar{x})+\frac{1}{k}.$ Thus, $h_k$ does not have a strong minimizer, which implies that $g_k=e_1h_k$ does not either, by Proposition \ref{thm5}. Therefore, $g_k\not\in F_m.$
\item[Case 2:] If Case 1 is not true, then there exists $k$ such that $f(x)\geq f(\bar{x})+\frac{1}{k}$ for every $x\neq\bar{x}.$ Then we claim that $f(x)=\infty$ for all $x\neq\bar{x}.$ Suppose for the purpose of contradiction that there exists $x\neq\bar{x}$ such that $f(x)<\infty.$ As $f\in\Gamma_0(\R^n),$ the function $\phi:[0,1]\rightarrow\R$ defined by $\phi(t):=f(tx+(1-t)\bar{x})$ is continuous by \cite[Proposition 2.1.6]{convanalgen}. This contradicts the assumption, therefore,
$$f(x)=\iota_{\{\bar{x}\}}(x)+f(\bar{x}).$$
Consequently,
$$g(x)=e_1f(x)=f(\bar{x})+\frac{1}{2}\|x-\bar{x}\|^2.$$
Now for every $j\in\N,$ define $f_j:\R^n\rightarrow\overline{\R},$
$$f_j(x):=\begin{cases}
f(\bar{x}),&\|x-\bar{x}\|\leq\frac{1}{j},\\\infty,&\mbox{otherwise.}
\end{cases}$$
We have $f_j\in\Gamma_0(\R^n),$ and
$$g_j(x):=e_1f_j(x)=\begin{cases}
f(\bar{x}),&\|x-\bar{x}\|\leq\frac{1}{j},\\
f(\bar{x})+\frac{1}{2}\left(\|x-\bar{x}\|-\frac{1}{j}\right)^2,&\|x-\bar{x}\|>\frac{1}{j}.
\end{cases}$$
Then $\{g_j(x)\}_{j\in\N}$ converges pointwise to $e_1f=g,$ by \cite[Theorem 7.37]{rockwets}. Thus, for sufficiently large $j,$ $\tilde{d}(g_j,g)<\varepsilon.$ Since $g_j$ is constant on $\B_{\frac{1}{j}}(\bar{x}),$ $g_j$ is not strongly convex, so $g_j\not\in F_m.$
\end{itemize}
\end{itemize}
Properties a), b) and c) all together show that the set of strongly convex function is meagre in $(e_{1}(\Gamma_{0}(\R^n),\tilde{d})$.
Note that $(e_{1}(\Gamma_{0}(\R^n),\tilde{d})$ and $(\Gamma_{0}(\R^n),d)$ are isometric by Corollary~\ref{c:convex:moreau}.
The proof is complete
by using Lemma~\ref{lem3}.
\end{proof}

\subsection{The set of convex functions with strong minimizers is of second category}

We present properties of the sets $U_m$ and $E_m,$ and show that the set of convex functions
that attain a strong minimum is a generic set in $(\Gamma_{0}(\R^n),d)$.
\begin{lem}\label{cor3}
The sets $U_m$ and $E_m$ are dense in $(\Gamma_0(\R^n),d).$
\end{lem}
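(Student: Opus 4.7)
The plan is to prove this as an essentially immediate consequence of the two previous main theorems on strongly convex functions. The strategy is transitivity of denseness: if a set $A$ contains a set $B$ that is already known to be dense, then $A$ itself is dense.

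First I would recall Theorem~\ref{thm8}, which established that every strongly convex function $f \in \Gamma_0(\R^n)$ lies in $U_m \cap E_m$ for all $m \in \N$. Concretely, the unique minimizer $z$ of a strongly convex $f$ is a strong minimizer (Lemma~\ref{lem5}), and by Theorem~\ref{thm7} we have $\inf_{\|x-z\|\geq 1/m} f(x) > f(z)$ for every $m$, so $f \in U_m$; the same argument applied to $e_1 f$, which is strongly convex by Lemma~\ref{lem3}, yields $f \in E_m$.

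Next I would invoke Theorem~\ref{thm9}, which asserts that the set of strongly convex functions is dense in $(\Gamma_0(\R^n), d)$. Combining these two facts, for any $f \in \Gamma_0(\R^n)$ and any $\varepsilon > 0$, Theorem~\ref{thm9} provides a strongly convex $h \in \Gamma_0(\R^n)$ with $d(h,f) < \varepsilon$, and by Theorem~\ref{thm8} we automatically have $h \in U_m$ and $h \in E_m$. Thus every open ball in $(\Gamma_0(\R^n), d)$ meets both $U_m$ and $E_m$, which is the definition of density.

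There is no real obstacle here; the content is entirely packaged inside Theorems~\ref{thm8} and~\ref{thm9}. The only thing to be careful about is simply citing both results in the right order and noting explicitly that denseness is inherited by supersets, so that one sentence suffices to conclude the proof for each of $U_m$ and $E_m$ simultaneously.
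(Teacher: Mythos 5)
Your proof is correct and follows exactly the paper's route: the paper's own proof is a one-line combination of Theorems~\ref{thm8} and~\ref{thm9}, which is precisely the argument you spell out. Nothing further is needed.
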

\begin{proof} This is immediate by combining Theorems \ref{thm8} and \ref{thm9}.
\end{proof}

To continue, we need the following result, which holds in $\Gamma_{0}(X)$ where $X$ is any Banach space.
\begin{lem}\label{lem1}
Let $f\in\Gamma_0(\R^n),$ $m\in\N,$ and fix $z\in\dom f.$ Then
$$\inf\limits_{\|x-z\|\geq\frac{1}{m}}f(x)-f(z)>0\mbox{ if and only if }\inf\limits_{m\geq\|x-z\|\geq\frac{1}{m}}f(x)-f(z)>0.$$
\end{lem}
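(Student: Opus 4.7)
The forward implication is essentially free: the constrained set on the right is a subset of the one on the left, so taking an infimum over a smaller set can only increase its value, and positivity is preserved. So the entire content of the lemma lies in the reverse direction, and my plan focuses there.

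For the reverse direction, set $\delta := \inf_{1/m \leq \|x-z\| \leq m}\bigl(f(x)-f(z)\bigr) > 0$. The only points not yet controlled are those $x$ with $\|x-z\| > m$; I need to show $f(x)-f(z) \geq \delta$ for every such $x$. If $x \notin \dom f$ then $f(x)-f(z) = +\infty > \delta$ trivially, so I may assume $x \in \dom f$. Since $z \in \dom f$ and $\dom f$ is convex, the segment $[z,x]$ lies in $\dom f$.

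The key step is a standard radial/convexity argument. Set $t := m/\|x-z\| \in (0,1)$ and define the ``shadow'' point $y := (1-t)z + tx = z + t(x-z)$, so $\|y-z\| = m$, placing $y$ in the annulus $1/m \leq \|y-z\| \leq m$. Convexity of $f$ gives
\begin{equation*}
f(y) \leq (1-t)f(z) + t f(x),
\end{equation*}
and rearranging yields
\begin{equation*}
f(x) - f(z) \geq \frac{1}{t}\bigl(f(y) - f(z)\bigr) = \frac{\|x-z\|}{m}\bigl(f(y)-f(z)\bigr).
\end{equation*}
Since $y$ is in the annulus we have $f(y) - f(z) \geq \delta > 0$, and since $\|x-z\|/m \geq 1$ the factor in front only amplifies this. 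Thus $f(x) - f(z) \geq \delta$ for all $x$ with $\|x-z\| > m$.

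Combining with $f(x)-f(z) \geq \delta$ on the annulus itself, I conclude $\inf_{\|x-z\| \geq 1/m}\bigl(f(x)-f(z)\bigr) \geq \delta > 0$. I don't anticipate any serious obstacle: the only mild subtlety is the $x \notin \dom f$ case, which is handled by the convention $f(x) = +\infty$, and the observation that $m \in \N$ guarantees $1/m \leq m$ so that the annulus is nonempty and the shadow point $y$ does land in it.
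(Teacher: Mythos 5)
Your proof is correct. The underlying idea is the same as the paper's --- exploit convexity along the segment joining $z$ to a point $x$ outside the ball $\B_m[z]$, using an intermediate point of that segment that lands in the annulus $\{x:\ \tfrac{1}{m}\leq\|x-z\|\leq m\}$ --- but the execution differs. The paper argues the reverse implication by contradiction: it assumes the left-hand infimum is $\leq 0$, extracts approximate minimizers $y_k$ with $f(y_k)\leq f(z)+\tfrac{1}{k}$, and pushes them along $[y_k,z]$ into the annulus to contradict the hypothesis. You instead give a direct quantitative estimate: with $t=m/\|x-z\|<1$ and $y=z+t(x-z)$ on the sphere $\|y-z\|=m$, convexity yields $f(x)-f(z)\geq\tfrac{1}{t}\bigl(f(y)-f(z)\bigr)\geq\delta$. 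Your version is slightly cleaner and in fact proves a bit more: combined with the trivial inequality from the forward direction, it shows the two infima are actually \emph{equal}, not merely simultaneously positive. You also correctly handle the two points the paper leaves implicit --- the case $x\notin\dom f$ via the convention $f(x)=+\infty$, and the fact that $m\in\N$ guarantees $\tfrac{1}{m}\leq m$ so the annulus is nonempty and contains $y$.
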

\begin{proof}
$(\Rightarrow)$ Suppose that for $z$ fixed, $\inf\limits_{\|x-z\|\geq\frac{1}{m}}f(x)-f(z)>0.$ Since
$$\inf\limits_{m\geq\|x-z\|\geq\frac{1}{m}}f(x)-f(z)
\geq\inf\limits_{\|x-z\|\geq\frac{1}{m}}f(x)-f(z)>0,$$
we have
$\inf\limits_{m\geq\|x-z\|\geq\frac{1}{m}}f(x)-f(z)>0.$

$(\Leftarrow)$ Let $\inf\limits_{m\geq\|x-z\|\geq\frac{1}{m}}f(x)-f(z)>0,$ and suppose that
$$\inf\limits_{\|x-z\|\geq\frac{1}{m}}f(x)-f(z)\leq0.$$
Then for each $\frac{1}{k}$ with $k\in\N,$ there exists $y_k$ with $\|y_k-z\|\geq\frac{1}{m}$ such that $f(y_k)\leq f(z)+\frac{1}{k}.$ Take $z_k\in[y_k,z]\cap\left\{x\in\R^n:\ m\geq\|x-z\|\geq\frac{1}{m}\right\}\neq\emptyset.$ Then
$$z_k=\lambda_ky_k+(1-\lambda_k)z$$
for some $\lambda_k\in [0,1].$ By the convexity of $f$, we have
\begin{align*}
f(z_k)&=f(\lambda_ky_k+(1-\lambda_k)z)\leq\lambda_kf(y_k)+(1-\lambda_k)f(z)\\
&\leq\lambda_kf(z)+(1-\lambda_k)f(z)+\frac{\lambda_k}{k}\\
&=f(z)+\frac{\lambda_k}{k}\leq f(z)+\frac{1}{k}.
\end{align*}
Now $\inf\limits_{m\geq\|x-z\|\geq\frac{1}{m}}f(x)\leq f(z_k)\leq f(z)+\frac{1}{k},$ so when $k\rightarrow\infty$ we obtain
$$\inf\limits_{m\geq\|x-z\|\geq\frac{1}{m}}f(x)-f(z)\leq0.$$
This contradicts the fact that $\inf\limits_{m\geq\|x-z\|\geq\frac{1}{m}}f(x)-f(z)>0.$ Therefore,
$\inf\limits_{\|x-z\|\geq\frac{1}{m}}f(x)-f(z)>0.$
\end{proof}

\begin{lem}\label{thm10}
The set $E_m$ is an open set in $(\Gamma_0(\R^n),d).$
\end{lem}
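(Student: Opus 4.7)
The plan is to show that any $f\in E_m$ admits a metric ball around it that is entirely contained in $E_m$, using Lemma~\ref{lem1} to reduce the infimum defining $E_m$ to one taken over a compact annulus, where uniform closeness of the Moreau envelopes is available from the metric $d$.

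Fix $f\in E_m$ with witness $z\in\R^n$, so that $e_{1}f\in\Gamma_{0}(\R^n)$ is full-domain and convex, and
$\inf_{\|x-z\|\geq 1/m}e_{1}f(x)-e_{1}f(z)>0$. First I apply Lemma~\ref{lem1} to the function $e_{1}f$ (which belongs to $\Gamma_{0}(\R^n)$ with $z\in\dom e_{1}f=\R^n$) to obtain
$$\delta:=\inf_{1/m\leq\|x-z\|\leq m}e_{1}f(x)-e_{1}f(z)>0.$$
Next I choose $N\in\N$ large enough that the closed annulus $A:=\{x:1/m\leq\|x-z\|\leq m\}$ and the point $z$ all lie in $\B_{N}[0]$, for instance $N\geq\|z\|+m$.

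Since convergence in $(\Gamma_{0}(\R^n),d)$ is equivalent to uniform convergence of Moreau envelopes on bounded sets (this is just how $d$ is constructed, together with Fact~\ref{fact:epi}), I can pick $\varepsilon>0$ so small that $d(f,g)<\varepsilon$ forces $\|e_{1}f-e_{1}g\|_{N}<\delta/4$. For any such $g$ and any $x\in A$,
$$e_{1}g(x)-e_{1}g(z)\geq\bigl(e_{1}f(x)-\tfrac{\delta}{4}\bigr)-\bigl(e_{1}f(z)+\tfrac{\delta}{4}\bigr)\geq\delta-\tfrac{\delta}{2}=\tfrac{\delta}{2}>0.$$
Therefore $\inf_{1/m\leq\|x-z\|\leq m}e_{1}g(x)-e_{1}g(z)\geq\delta/2>0$, and applying Lemma~\ref{lem1} in the reverse direction to $e_{1}g\in\Gamma_{0}(\R^n)$ yields
$$\inf_{\|x-z\|\geq 1/m}e_{1}g(x)-e_{1}g(z)>0,$$
so $g\in E_{m}$ with the same witness $z$. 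Hence the open ball $\B_{\varepsilon}(f)$ lies in $E_{m}$, which proves $E_{m}$ is open.

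The only subtle point is the use of Lemma~\ref{lem1} on both sides of the argument: it is what allows me to pass from the unbounded condition defining $E_{m}$ to a condition on a bounded set, and back again. Without this reduction, the small perturbation controlled by $d$ only on bounded sets could not guarantee that the strict inequality persists at infinity. Everything else is a straightforward triangle-inequality argument combined with the fact that $d$-convergence is precisely uniform convergence on bounded balls of Moreau envelopes.
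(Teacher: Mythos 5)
Your proof is correct and follows essentially the same route as the paper: reduce the defining infimum to the compact annulus via Lemma~\ref{lem1}, use the fact that a small $d$-ball forces uniform closeness of the Moreau envelopes on a bounded ball containing that annulus and the witness point $z$, and then pass back through Lemma~\ref{lem1}. The only cosmetic difference is that the paper writes out the explicit bound on $\varepsilon$ in terms of $\delta$ and $2^{j}$, whereas you simply assert such an $\varepsilon$ exists, which is justified by the same computation.
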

\begin{proof} Fix $m\in\N,$ and let $f\in E_m.$ Then there exists $z\in\R^n$ such that $\inf\limits_{\|x-z\|\geq\frac{1}{m}}e_1f(x)-e_1f(z)>0.$ Hence, by Lemma \ref{lem1},
$$\inf\limits_{m\geq\|x-z\|\geq\frac{1}{m}}e_1f(x)-e_1f(z)>0.$$
Choose $j$ large enough that $\B_m[z]\subseteq \B_j(0).$ Let $g\in\Gamma_0(\R^n)$ be such that $d(f,g)<\varepsilon,$
where
\begin{equation}\label{eq7}
0<\varepsilon<\frac{\inf\limits_{m\geq\|x-z\|\geq
\frac{1}{m}}e_1f(x)-e_1f(z)}{2^j\left(2+\inf\limits_{m\geq\|x-z\|\geq\frac{1}{m}}e_1f(x)-e_1f(z)\right)}<\frac{1}{2^{j}}.
\end{equation}
The reason for this bound on $\varepsilon$ will become apparent at the end of the proof. Then
$$\sum\limits_{i=1}^\infty\frac{1}{2^i}\frac{\|e_1f-e_1g\|_i}{1+\|e_1f-e_1g\|_i}<\varepsilon.$$
In particular for our choice of $j,$  we have that $2^{j}\varepsilon<1$ by \eqref{eq7}, and that
\begin{align*}
\frac{1}{2^j}\frac{\|e_1f-e_1g\|_j}{1+\|e_1f-e_1g\|_j}&<\varepsilon,\\
\|e_1f-e_1g\|_j&<2^j\varepsilon(1+\|e_1f-e_1g\|_j),\\
\sup\limits_{\|x\|\leq j}|e_1f(x)-e_1g(x)|(1-2^j\varepsilon)&<2^j\varepsilon,\\
\sup\limits_{\|x\|\leq j}|e_1f(x)-e_1g(x)|&<\frac{2^j\varepsilon}{1-2^j\varepsilon}.
\end{align*}
Define $\alpha:=\frac{2^j\varepsilon}{1-2^j\varepsilon}.$ Then $\sup\limits_{\|x\|\leq j}|e_1f(x)-e_1g(x)|<\alpha.$ Hence,
$$|e_1f(x)-e_1g(x)|<\alpha\mbox{ for all }x\mbox{ with }\|x\|\leq j.$$
In other words,
$$e_1f(x)-\alpha<e_1g(x)<e_1f(x)+\alpha\mbox{ for all }x\mbox{ with }\|x\|\leq j.$$
Since $\B_m[z]\subseteq \B_j(0),$ we can take the infimum over $m\geq\|x-z\|\geq\frac{1}{m}$ to obtain
\begin{equation}\label{eq8}
\inf\limits_{m\geq\|x-z\|\leq\frac{1}{m}}e_1f(x)-\alpha\leq
\inf\limits_{m\geq\|x-z\|\geq\frac{1}{m}}e_1g(x)\leq\inf\limits_{m\geq\|x-z\|\geq\frac{1}{m}}e_1f(x)+\alpha.
\end{equation}
Using equation \eqref{eq8} together with the fact that $|e_1g(z)-e_1f(z)|<\alpha$ yields
\begin{align*}
\inf\limits_{m\geq\|x-z\|\geq\frac{1}{m}}e_1g(x)-e_1g(z)&\geq\left(\inf\limits_{m\geq\|x-z\|\geq\frac{1}{m}}e_1f(x)-\alpha\right)-(e_1f(z)+\alpha)\\
&=\inf\limits_{m\geq\|x-z\|\geq\frac{1}{m}}e_1f(x)-e_1f(z)-2\alpha.
\end{align*}
Hence, if
\begin{equation}\label{eq9}
\alpha<\frac{\inf\limits_{m\geq\|x-z\|\geq\frac{1}{m}}e_1f(x)-e_1f(z)}{2},
\end{equation}
we have
\begin{equation}\label{eq10}
\inf\limits_{m\geq\|x-z\|\geq\frac{1}{m}}e_1g(x)-e_1g(z)>0.
\end{equation}
Recalling that $\alpha=\frac{2^j\varepsilon}{1-2^j\varepsilon},$ we solve equation \eqref{eq9} for $\varepsilon$ to obtain
$$\varepsilon<\frac{\inf\limits_{m\geq\|x-z\|\geq\frac{1}{m}}e_1f(x)-e_1f(z)}{2^j\left(2+\inf\limits_{m\geq\|x-z\|\geq\frac{1}{m}}e_1f(x)-e_1f(z)\right)}.$$
Thus, equation \eqref{eq10} is true whenever $d(f,g)<\varepsilon$ for any $\varepsilon$ that respects equation \eqref{eq7}. Applying Lemma \ref{lem1} to equation \eqref{eq10}, we conclude that
$$\inf\limits_{\|x-z\|\geq\frac{1}{m}}e_1g(x)-e_1g(z)>0.$$
Hence, if $g\in\Gamma_0(\R^n)$ is such that $d(f,g)<\varepsilon,$ then $g\in E_m.$ Therefore, $E_m$ is open.
\end{proof}

We are now ready to present the main results of the paper.
\begin{thm}\label{generictheorem}
In $X:=(\Gamma_0(\R^n),d),$ the set $S:=\{f\in\Gamma_0(\R^n):f\mbox{ attains a strong minimum}\}$ is generic.
\end{thm}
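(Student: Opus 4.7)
The plan is to assemble the pieces already established in the paper and apply the Baire category theorem. The set $S$ is defined by a qualitative property (attaining a strong minimum), but the sets $E_m$ have been introduced precisely to give a quantitative, topologically tractable approximation to this property: by Theorem~\ref{cor1}, $\bigcap_{m\in\N} E_m \subseteq S$. So it suffices to show that $\bigcap_{m\in\N} E_m$ is generic, and then observe that any superset of a generic set is generic (since its complement is contained in a meagre set).

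Concretely, I would proceed as follows. First, recall that $(\Gamma_0(\R^n),d)$ is a complete metric space by Proposition~\ref{prop1}. Second, Lemma~\ref{thm10} gives that each $E_m$ is open in $(\Gamma_0(\R^n),d)$, and Lemma~\ref{cor3} gives that each $E_m$ is dense. Third, by the Baire category theorem, the countable intersection $\bigcap_{m\in\N} E_m$ is a dense $G_\delta$ set in $(\Gamma_0(\R^n),d)$; in particular, its complement
\[
\Gamma_0(\R^n)\setminus \bigcap_{m\in\N} E_m \;=\; \bigcup_{m\in\N}\bigl(\Gamma_0(\R^n)\setminus E_m\bigr)
\]
is a countable union of closed nowhere dense sets, hence meagre. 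Fourth, Theorem~\ref{cor1} guarantees $\bigcap_{m\in\N} E_m \subseteq S$, so
\[
\Gamma_0(\R^n)\setminus S \;\subseteq\; \Gamma_0(\R^n)\setminus \bigcap_{m\in\N} E_m,
\]
and since any subset of a meagre set is meagre, $\Gamma_0(\R^n)\setminus S$ is meagre. By definition, $S$ is of second category, i.e.\ generic.

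There is no real obstacle here; all the work has been front-loaded. The delicate step was establishing openness of $E_m$ (Lemma~\ref{thm10}), which required Lemma~\ref{lem1} to reduce the infimum over the unbounded set $\{\|x-z\|\ge 1/m\}$ to the compact annulus $\{1/m\le\|x-z\|\le m\}$ on which uniform control of $e_1 f - e_1 g$ via the metric $d$ is available; density of $E_m$ rested on Theorem~\ref{thm9} (density of strongly convex functions) combined with Theorem~\ref{thm8}. Once those are in hand, the present statement is an immediate Baire-category consequence.
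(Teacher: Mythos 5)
Your proposal is correct and follows essentially the same route as the paper: openness of $E_m$ (Lemma~\ref{thm10}) plus density (Lemma~\ref{cor3}) give that $\bigcap_{m\in\N}E_m$ is generic in the complete space $(\Gamma_0(\R^n),d)$, and Theorem~\ref{cor1} places this intersection inside $S$. The only difference is cosmetic: you spell out why a superset of a generic set is generic, a step the paper leaves implicit.
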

\begin{proof}
By Lemmas \ref{cor3} and \ref{thm10}, we have that $E_m$ is open and dense in $X.$ Hence,
$G:=\bigcap\limits_{m\in\N}E_m$ is a countable intersection of open, dense sets in $X$,
and as such $G$ is generic in $X.$ Let $f\in G.$ By Corollary \ref{cor1},
$f$ attains a strong minimum on $\R^n.$ Thus, every element of $G$ attains a strong minimum on $\R^n.$ Since $G$ is generic in $X$ and $G\subseteq S,$ we conclude that $S$ is generic in $X.$
\end{proof}
\begin{thm}\label{t:fullrange}
In $X:=(\Gamma_0(\R^n),d),$ the set $S:=\{f\in\Gamma_0(\R^n):f\mbox{ is coercive}\}$ is generic.
\end{thm}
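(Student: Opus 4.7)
The plan is to exhibit $S$ as a superset of the generic set $G := \{f \in \Gamma_0(\R^n) : f \text{ attains a strong minimum}\}$ established in Theorem \ref{generictheorem}. Once the inclusion $G \subseteq S$ is in hand the conclusion is immediate, since if $X \setminus G$ is meagre then the smaller set $X \setminus S$ is also meagre, and hence $S$ is generic.

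To establish $G \subseteq S$, I would show that every $f \in \Gamma_0(\R^n)$ admitting a strong minimizer is automatically coercive. Suppose $\bar{x}$ is a strong minimizer of $f$. By definition $\bar{x}$ is then the unique minimizer of $f$, so the recession-cone argument already employed in the proof of Theorem \ref{thm7.5} applies verbatim: all nonempty sublevel sets of $f$ share the same recession cone by \cite[Theorem 8.7]{convanalrock}, and since the level set $\{x : f(x) \leq f(\bar{x})\} = \{\bar{x}\}$ has trivial recession cone, \cite[Proposition 1.1.5]{convanal} yields $\liminf_{\|x\| \to \infty} f(x)/\|x\| > 0$, which convexity upgrades to $\liminf_{\|x\| \to \infty} f(x)/\|x\| = \infty$, i.e., coercivity. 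Thus $G \subseteq S$ and $S$ is generic.

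I do not anticipate a serious obstacle, as the technical work has been absorbed into Theorems \ref{generictheorem} and \ref{thm7.5}. An alternative route would exploit the isometry $T : f \mapsto f^*$ of Corollary \ref{thmmorconj} together with the duality characterization $\dom f^* = \R^n$ from Fact \ref{domfcoercive}, recasting the theorem as the genericity of $\{g \in \Gamma_0(\R^n) : \dom g = \R^n\}$; but this second path appears strictly more labor-intensive than the direct containment argument, which essentially writes itself in a single short paragraph.
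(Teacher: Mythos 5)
Your argument hinges on the inclusion $G\subseteq S$, i.e.\ on the claim that every $f\in\Gamma_0(\R^n)$ with a strong minimizer is coercive in the sense this paper uses, namely $\liminf_{\|x\|\to\infty}f(x)/\|x\|=\infty$ (supercoercivity). That claim is false, so the containment fails and the proof collapses. Take $f=\|\cdot\|$: its unique minimizer $\bar{x}=0$ is a strong minimizer (if $\|x_k\|\to 0$ then $x_k\to 0$), yet $f(x)/\|x\|\equiv 1$, so $f\notin S$; equivalently $f^*=\iota_{\B_1[0]}$ has $\dom f^*\neq\R^n$, confirming via Fact~\ref{domfcoercive} that $f$ is not coercive. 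The step you borrow --- ``$\liminf f(x)/\|x\|>0$, which convexity upgrades to $\liminf f(x)/\|x\|=\infty$'' --- is exactly where things break: a unique minimizer gives trivial recession cone, hence bounded level sets and level-coercivity ($f(x)\to\infty$ as $\|x\|\to\infty$), which is all Theorem~\ref{thm7.5} actually needs to make minimizing sequences bounded, but convexity does not upgrade this to supercoercivity. The two generic sets $G$ and $S$ are simply not nested.

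The paper's proof has to work harder. It fixes a countable dense set $\{-x_i^*\}$ in $\R^n$ (Fact~\ref{separable}), applies Theorem~\ref{generictheorem} to each tilted family $f\mapsto f+\langle x_i^*,\cdot\rangle$ (using $\Gamma_0(\R^n)+x_i^*=\Gamma_0(\R^n)$) to produce generic sets $G_{x_i^*}$ on which $-x_i^*\in\ran\partial f$, intersects them to obtain a generic $G$ with $\overline{\ran\partial f}=\R^n$ for every $f\in G$, upgrades this to $\ran\partial f=\R^n$ using almost-convexity of the range of a maximally monotone operator (Facts~\ref{subdiffmaxmono} and \ref{maxmonoalmostconvex}), and then concludes $\dom f^*=\R^n$ and hence coercivity via Fact~\ref{domfcoercive}. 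The ``alternative route'' you dismiss is in fact the direction the paper takes; note also that it cannot be run the way you sketch it, since the genericity of $\{g:\dom g=\R^n\}$ is Theorem~\ref{t:fulldom}, which the paper derives \emph{from} Theorem~\ref{t:fullrange}, so invoking it here would be circular.
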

\begin{proof}
Define the set $\Gamma_1(\R^n):=\Gamma_0(\R^n)+x^*,$ in the sense that for any function $f\in\Gamma_0(\R^n),$ the function $f+\langle x^*,\cdot\rangle\in\Gamma_1(\R^n).$ Since any such $f+\langle x^*,\cdot\rangle$ is proper, lsc, and convex, we have $\Gamma_1(\R^n)\subseteq\Gamma_0(\R^n).$ Now, since for any $f\in\Gamma_0(\R^n)$ we have that $f-x^*\in\Gamma_0(\R^n),$ this gives us that $f\in\Gamma_0(\R^n)+x^*=\Gamma_1(\R^n).$ Therefore, $\Gamma_1(\R^n)=\Gamma_0(\R^n).$ By Theorem \ref{generictheorem}, there exists a generic set $G\subseteq\Gamma_0(\R^n)$ such that for every $f\in G,$ $f$ attains a strong minimum at some point $x,$ and hence $0\in\partial f(x).$ Then, given any $x^*$ fixed, there exists a generic set $G_{x^*}$ that contains a dense $G_\delta$ set, such that $0\in\partial(f+x^*)(x).$ Thus, for each $f\in G_{x^*}$ there exists $x\in\R^n$ such that $-x^*\in\partial f(x)$.  By Fact \ref{separable}, it is possible to construct the set $D:=\{-x_i^*\}_{i=1}^\infty$ such that $\overline{D}=\R^n.$ Then each set $G_{x_i^*},$ $i\in\N,$ contains a dense $G_\delta$ set. Therefore, the set $G:=\bigcap\limits_{i=1}^\infty G_{x_i^*}$ contains a dense $G_\delta$ set. Let $f\in G.$ Then for each $i\in\N,$ $-x_i^*\in\partial f(x)$ for some $x\in\R^n.$ That is, $-x_i^*\in\ran\partial f.$ So $D:=\bigcup\limits_{i=1}^\infty\{-x_i^*\}\subseteq\ran\partial f,$ and $\overline{D}\subseteq\overline{\ran\partial f}.$ Since $\overline{D}=\R^n,$ we have $\R^n=\overline{\ran\partial f}.$ By Facts \ref{subdiffmaxmono} and \ref{maxmonoalmostconvex}, $\ran\partial f$ is almost convex; there exists a convex set $C$ such that $C\subseteq\ran f\subseteq\overline{C}.$ Then $\overline{C}=\R^n.$ As $C$ is convex, by \cite[Theorem 6.3]{convanalrock} we have
the relative interior
$\ri\overline{C}=\ri C,$ so $\ri C=\R^n.$ Thus, $\R^n=\ri C\subseteq C,$
which gives us that $C=\R^n.$ Therefore, $\ran\partial f=\R^n.$ By Fact \ref{inverse},
$\ran\partial f\subseteq\dom(f^*).$ Hence, $\dom f^*=\R^n.$ By Fact \ref{domfcoercive}, we have that $\lim\limits_{\|x\|\rightarrow\infty}\frac{f(x)}{\|x\|}=\infty.$ Therefore, $f$ is coercive for all $f\in G.$ Since $G$ is generic in $X$ and $G\subseteq S,$ we conclude that $S$ is generic in $X.$
\end{proof}

\begin{thm}\label{t:fulldom}
 In $(\Gamma_{0}(\R^n), d)$, the set
$S:=\{f\in \Gamma_{0}(\R^n):\ \dom f=\R^n \}$
is generic.
\end{thm}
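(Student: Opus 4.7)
The plan is to deduce this from Theorem~\ref{t:fullrange} by transporting genericity across the Fenchel conjugate isometry established in Corollary~\ref{thmmorconj}. Recall that the map $T\colon f\mapsto f^*$ is a bijective isometry of $(\Gamma_0(\R^n),d)$ onto $((\Gamma_0(\R^n))^*,\hat{d})$, and that by Fenchel--Moreau $(\Gamma_0(\R^n))^*=\Gamma_0(\R^n)$ with $\hat{d}=d$. So $T$ is actually a self-isometry of the complete metric space $(\Gamma_0(\R^n),d)$, and in particular it sends generic sets to generic sets (images and preimages of meagre sets under a homeomorphism are meagre).

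Next I would identify $S$ with $T(C)$, where
$$C:=\{f\in\Gamma_0(\R^n):f\text{ is coercive}\}.$$
By Fact~\ref{domfcoercive}, $f\in C$ iff $\dom f^*=\R^n$, i.e.\ iff $Tf\in S$. Conversely, given any $g\in S$, set $f:=g^*\in\Gamma_0(\R^n)$; then $Tf=f^*=g^{**}=g$ and $\dom f^*=\dom g=\R^n$ shows $f\in C$. Hence $T(C)=S$.

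Finally, Theorem~\ref{t:fullrange} asserts that $C$ is generic in $(\Gamma_0(\R^n),d)$. Since $T$ is a bijective isometry, $S=T(C)$ is generic in $(\Gamma_0(\R^n),d)$, completing the proof.

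There is no real obstacle here; the only thing to be careful about is invoking the Fenchel--Moreau biconjugate theorem to see that $T$ is surjective onto $\Gamma_0(\R^n)$, so that the identification $S=T(C)$ is genuinely onto. Everything else is bookkeeping using Corollary~\ref{thmmorconj}, Fact~\ref{domfcoercive}, and Theorem~\ref{t:fullrange}.
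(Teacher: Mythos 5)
Your proof is correct and follows essentially the same route as the paper: both transport the genericity of the coercive functions (Theorem~\ref{t:fullrange}) through the Fenchel-conjugation isometry of Corollary~\ref{thmmorconj}, using Fact~\ref{domfcoercive} to identify $\{f:\dom f=\R^n\}$ with the image of the coercive functions under $f\mapsto f^*$. Your write-up is in fact somewhat more careful than the paper's, since you explicitly verify the set identification $S=T(C)$ and note that a bijective isometry carries generic sets to generic sets.
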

\begin{proof} Note that $(\Gamma_{0}(\R^n))^*=\Gamma_{0}(\R^n)$. In $((\Gamma_{0}(\R^n))^*, d)$,
by Theorem~\ref{t:fullrange}, the set
$$\{f^*\in (\Gamma_{0}(\R^n))^*:\ f^* \text{ is  coercive} \}$$
is generic. Since $f^*$ is coervcive if and only if $f$ has $\dom f=\R^n$ by Fact~\ref{domfcoercive},
the proof is done.
\end{proof}

Combining Theorems~\ref{generictheorem}, \ref{t:fullrange} and ~\ref{t:fulldom}, we obtain
\begin{cor} In $(\Gamma_{0}(\R^n), d)$, the set
$$S:=\{f\in \Gamma_{0}(\R^n):\ \dom f=\R^n, \dom f^*=\R^n, f \text{ has a strong minimizer}\}$$
is generic.
\end{cor}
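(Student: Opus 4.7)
The plan is straightforward: the corollary asserts that the intersection of three generic sets is itself generic, so I would simply verify that the class of generic sets is closed under finite intersection in the complete metric space $(\Gamma_{0}(\R^n),d)$.

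First I would name the three sets explicitly. Set
\begin{align*}
S_{1} &:= \{f\in\Gamma_{0}(\R^n):\ f\text{ has a strong minimizer}\},\\
S_{2} &:= \{f\in\Gamma_{0}(\R^n):\ f\text{ is coercive}\} = \{f\in\Gamma_{0}(\R^n):\ \dom f^{*}=\R^n\},\\
S_{3} &:= \{f\in\Gamma_{0}(\R^n):\ \dom f = \R^n\},
\end{align*}
so that $S = S_{1}\cap S_{2}\cap S_{3}$. The equality in the description of $S_{2}$ is Fact~\ref{domfcoercive}. By Theorem~\ref{generictheorem}, Theorem~\ref{t:fullrange} and Theorem~\ref{t:fulldom} respectively, each of $S_{1}, S_{2}, S_{3}$ is generic in $(\Gamma_{0}(\R^n),d)$.

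Next I would invoke the general Baire-category fact that a finite intersection of generic sets is generic. By the definition of generic, for each $i\in\{1,2,3\}$ the complement $\Gamma_{0}(\R^n)\setminus S_{i}$ is meagre, i.e., a countable union of nowhere dense sets. Then
$$\Gamma_{0}(\R^n)\setminus S = \bigcup_{i=1}^{3}\big(\Gamma_{0}(\R^n)\setminus S_{i}\big)$$
is a countable (in fact finite) union of meagre sets, and hence meagre. Therefore $S$ is generic in $(\Gamma_{0}(\R^n),d)$. Alternatively, inspecting the proofs of the three theorems shows that each $S_{i}$ contains a dense $G_{\delta}$ set; since $(\Gamma_{0}(\R^n),d)$ is a complete metric space by Proposition~\ref{prop1}, the Baire category theorem guarantees that the intersection of three dense $G_{\delta}$ sets is again a dense $G_{\delta}$, which yields the same conclusion.

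There is essentially no obstacle here: the substantive content is already packaged in the three cited theorems, and the corollary is a one-line application of the fact that finite unions of meagre sets are meagre in any topological space (so no completeness is even needed for this closing step, only for the individual theorems). The only mild point worth mentioning is the identification of coercivity of $f$ with $\dom f^{*} = \R^n$ via Fact~\ref{domfcoercive}, which ensures that $S_{2}$ above is exactly the set singled out by Theorem~\ref{t:fullrange}.
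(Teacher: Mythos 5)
Your proof is correct and follows exactly the route the paper intends: the paper gives no explicit proof, simply "combining" Theorems~\ref{generictheorem}, \ref{t:fullrange} and \ref{t:fulldom}, and your observation that a finite intersection of generic sets is generic (equivalently, a finite union of meagre sets is meagre) is precisely the implicit step being used. The identification of $\dom f^*=\R^n$ with coercivity via Fact~\ref{domfcoercive} is also the intended reading.
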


\section{Conclusion}\label{sec:conc}

 Endowed with the Attouch-Wets metric, based on the Moreau envelope,
 the set of proper lower semicontinuous convex functions becomes a complete metric space. In this complete metric space,
 the topology  is epi-convergence topology.
 We have proved several Baire category results. In particular, we have shown that in $(\Gamma_0(R^n),d)$ the set of strongly convex functions is category one, the set of functions that attain a strong minimum is category two, and the set of coercive functions is category two. Several other results about strongly convex functions and functions with strong minima are included. In future work that has already commenced, we will continue to develop the theory of Moreau envelopes,
 providing characterizations and illustrative examples of how to calculate them, and extend results in
this paper to convex functions defined on Hilbert spaces or to prox-bounded functions on $\R^n$.

\bibliographystyle{plain}
\bibliography{Bibliography}{}
\end{document}